\theoremstyle{plain}
\newtheorem{theorem}{Theorem}[section]
\newtheorem{proposition}[theorem]{Proposition}
\newtheorem{corollary}[theorem]{Corollary}
\newtheorem{lemma}[theorem]{Lemma}
\newtheorem{definition}[theorem]{Definition}
\newtheorem{remark}[theorem]{Remark}
\newtheorem{example}[theorem]{Example}
\newcommand{\complexSbrevity}{s_{k,c}}
\newtheorem{apptheorem}{Theorem}[section]
\newtheorem{appproposition}[apptheorem]{Proposition}
\newtheorem{applemma}[apptheorem]{Lemma}
\newtheorem{appexample}[apptheorem]{Example}
\newtheorem{appremark}[apptheorem]{Remark}
\DeclareMathOperator{\Span}{\mathrm{Span}}
\numberwithin{equation}{section}
\newcommand{\SUParts}{\mathcal{S}^{\mathrm{lin}}(U_R)}
\newcommand{\SchwartzC}{\mathcal{S}_{\bold{c}}}
\newcommand{\SUPartsc}{\SchwartzC^{\mathrm{lin}}(U_R)}
\newcommand{\faithful}{r_G}
\newcommand{\faithfulG}[1]{r_{#1}}
\newcommand{\Umacro}{V}
\newcommand{\orb}{\mathfrak{C}}
\newcommand{\Irr}{\mathrm{Irr}}
\newcommand{\IrrGen}{\mathrm{Irr}_{\mathrm{gen}}}
\newcommand{\IrrGenUni}{\mathrm{Irr}_{\mathrm{gen,u}}}
\newcommand{\IrrUnrForPoles}{\mathrm{Irr}_{\mathrm{rel}}}
\newcommand{\IrrUnrUniForPoles}{\mathrm{Irr}_{\mathrm{rel,a.u.}}}
\newcommand{\Specialjmath}{\jmath}
\newcommand{\WeylElement}{\mathfrak{w}}
\newcommand{\LeadingCoeff}{\mathscr{E}}
\newcommand{\localfield}{\mathfrak{f}}
\newcommand{\transfer}{\mathfrak{T}}
\newcommand{\ZInnerInner}{Z^2}
\def\dintegrallocal{d(o_1,o_2)}
\def\dintegrallocalVarG{o}
\newcommand{\Dpt}{\mathfrak{S}}
\def\Z{\mathbb{Z}}
\def\R{\mathbb{R}}
\def\A{\mathbb{A}}
\def\C{\mathbb{C}}
\def\Mat{\mathrm{Mat}}
\newcommand{\emptyblockforGL}{\oslash}
\newcommand{\circforgspin}{\bullet}
\newcommand{\ParabolicCT}{R}
\newcommand{\embedding}{\operatorname{\mathfrak{e}}}
\newcommand{\embeddingL}{\operatorname{\mathfrak{e}_1}}
\newcommand{\embeddingR}{\operatorname{\mathfrak{e}_2}}
\newcommand{\appembeddingOne}{\operatorname{\mathfrak{e}_1}}
\newcommand{\appembeddingTwo}{\operatorname{\mathfrak{e}_2}}
\DeclareMathOperator{\tr}{tr}
\DeclareMathOperator{\Hom}{Hom}
\DeclareMathOperator{\gcdzzz}{\mathfrak{L}}
\newcommand{\absdet}{|\cdot|}
\newcommand{\intertInduced}[2]{M_{#1}^{\square}(#2)}
\newcommand{\LeviZ}{r_z}
\newcommand{\LeviY}{r_{({}^{\WeylElement_0}y)}}
\newcommand{\LeviA}{m_a}
\newcommand{\Levi}[1]{\ell(#1)}
\newcommand{\Uni}[1]{\upsilon(#1)}
\newcommand{\Unim}[1]{\upsilon_{-}(#1)}
\newcommand{\Es}[1]{\boldsymbol{#1}}
\DeclareMathOperator{\BigLevi}{\mathscr{L}}
\DeclareMathOperator{\BigUni}{\mathscr{U}}
\newcommand{\temp}{\operatorname{temp}}
\newcommand{\disc}{\operatorname{disc}}
\newcommand{\autparam}{\phi}
\DeclareMathOperator{\diag}{diag}
\newcommand{\Dual}[1]{\widehat{#1}}
\DeclareMathOperator{\Ind}{Ind}
\DeclareMathOperator{\GL}{GL}
\DeclareMathOperator{\SL}{SL}
\DeclareMathOperator{\SO}{SO}
\DeclareMathOperator{\Spin}{Spin}
\DeclareMathOperator{\GSpin}{GSpin}
\DeclareMathOperator{\GPin}{GPin}
\DeclareMathOperator{\GSO}{GSO}
\DeclareMathOperator{\GO}{GO}
\DeclareMathOperator{\GSp}{GSp}
\DeclareMathOperator{\Orth}{O}
\DeclareMathOperator{\Sp}{Sp}
\DeclareMathOperator{\Real}{Re}
\newcommand{\bs}{\backslash}
\begin{document}
\title[Doubling constructions and functoriality]{Doubling constructions: Global functoriality for non-generic cuspidal representations}
\author{Yuanqing Cai}
\author{Solomon Friedberg}
\author{Eyal Kaplan}
\address{Cai: Faculty of Mathematics and Physics, Institute of Science and Engineering, Kanazawa University, Kakumamachi, Kanazawa, Ishikawa, 920-1192, Japan}
\email{cai@se.kanazawa-u.ac.jp}
\address{Friedberg:  Department of Mathematics, Boston College, Chestnut Hill, MA 02467-3806, USA}
\email{solomon.friedberg@bc.edu}
\address{Kaplan: Department of Mathematics, Bar Ilan University, Ramat Gan 5290002, Israel}
\email{kaplaney@gmail.com}
\thanks{This research was supported by the ERC, StG grant number 637912 (Cai),
by the JSPS KAKENHI grant number 19F19019 (Cai), by MEXT Leading Initiative for Excellent Young Researchers Grant Number JPMXS0320200394 (Cai),
by JSPS KAKENHI Grant Number 23K12951 (Cai), by the BSF, grant number 2012019 (Friedberg), by the NSF, grant numbers 1500977, 1801497 and 2100206 (Friedberg), and by the Israel Science Foundation, grant numbers 376/21 and 421/17 (Kaplan).}
\subjclass[2010]{Primary 11F70; Secondary 11F55, 11F66, 22E50, 22E55}
\keywords{Doubling method, Eisenstein series, Functoriality, general spin groups,
Rankin--Selberg $L$-function, non-generic automorphic representation, unipotent orbit}
\begin{abstract}
We study the generalized doubling method for pairs of representations of
$G\times\GL_k$ where $G$ is a symplectic group, split special orthogonal group or split general spin group. We analyze the poles of the local integrals, and prove that the global completed $L$-function with a cuspidal representation of $\GL_k$ twisted by a highly ramified Hecke character
is entire. We obtain a new proof of the weak functorial transfer of cuspidal automorphic representations of $G$ to the natural general linear group, which is independent of the trace formula and its prerequisites, by combining our results with the Converse Theorem.
\end{abstract}
\maketitle
\addtocontents{toc}{\protect\setcounter{tocdepth}{2}}
\section*{Introduction}\label{intro}

Let $F$ be a number field with a ring of adeles $\A$. Let $G$ be either a symplectic group or a quasi-split orthogonal group defined over $F$. In his monumental work, Arthur described the discrete part $\Pi_{\disc}(G)$ of
$L^2(G(F)\bs G(\A))$ in terms of automorphic representations of $\GL_N(\A)$ \cite[\S~1.5]{Arthur2013}, where $N$ is the dimension of the natural faithful representation $\faithful$ of the connected component of the Langlands dual group of $G$. More precisely, $\Pi_{\disc}(G)$ is the disjoint union of sets $\Pi_{\autparam}(G)$ ($A$-packets) of automorphic representations of $G(\A)$, where $\autparam$ ranges over a certain set $\tilde{\Psi}_2(G)$ of automorphic representations of
$\GL_N(\A)$ that appear in the spectral decomposition of $L^2(\GL_N(F)\bs\GL_N(\A))$.
In particular the representation $\autparam$ is isobaric. Moreover for every $\pi\in\Pi_{\autparam}(G)$, $\autparam$ is a weak transfer of $\pi$ in the sense that $\autparam_{\nu}$ is the local unramified transfer of $\pi_{\nu}$ to $\GL_N(F_{\nu})$ at almost all finite places $\nu$ of $F$, and at the infinite places $\nu$ the infinitesimal character of $\autparam_{\nu}$ is determined by that of $\pi_{\nu}$ via $r_G$. In contrast with $\Pi_{\disc}(G)$, the set $\tilde{\Psi}_2(G)$ satisfies strong multiplicity one, i.e., each $\autparam\in\tilde{\Psi}_2(G)$ is determined by its local components almost everywhere.

Arthur's strategy is based on a comparison of a stable trace formula for classical groups and
a twisted stable trace formula for $\GL_N$. The stabilization of the trace formula and the twisted one
are therefore indispensable prerequisites. The former was carried out by Arthur \cite{Arthur2001,Arthur2002,Arthur2003prt3} and the latter
by M{\oe}glin and Waldspurger \cite{MoeglinWaldspurger2016a,MoeglinWaldspurger2016b}. Additional crucial ingredients are the fundamental lemma
proved by Ng\^{o} \cite{Ngo2010} and many other important aspects of local harmonic analysis due to Langlands, Kottwitz, Shelstad, Waldspurger, Chaudouard and Laumon among others.
(At the time of writing this paper, several references in \cite{Arthur2013} remain unpublished.)
See \cite{Mok2015,Kalethaetal2014} for implementations of Arthur's strategy to unitary groups and to non-quasi-split groups.

In this paper we present a new proof of the weak functorial transfer from split classical groups to general linear groups,
one that is independent of the trace formula and its prerequisites. We also establish the transfer from split general spin groups.
In principle, Arthur's strategy is applicable to these groups as well. However, to the best of our knowledge this has not been carried out to obtain a global transfer. In the local setup see M{\oe}glin \cite{Moeglin2014} for a simplification of Arthur's argument for the local Langlands correspondence for classical groups and also for general spin groups.

Let $G$ be either a split symplectic group or a split special orthogonal group of rank $n$, or a split general spin group of rank $n+1$, for $n\geq1$.
Define $\faithful$ as above. Then $N=2n$ except in the symplectic case where $N=2n+1$. Here is our main result.
\begin{theorem}\label{theo:globl functorial lift}
Any irreducible cuspidal automorphic representation $\pi$ of $G(\A)$ has a weak functorial transfer $\Pi$ to $\GL_N(\A)$.
\end{theorem}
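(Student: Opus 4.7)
\emph{Proof plan.} The strategy is to apply the Converse Theorem of Cogdell--Piatetski-Shapiro to a candidate representation $\Pi$ of $\GL_N(\A)$ built place-by-place. At each archimedean place and each non-archimedean place $\nu$ where $\pi_\nu$ is unramified, take $\Pi_\nu$ to be the known local Langlands lift. At the remaining finite set $S$ of places, choose $\Pi_\nu$ to be any irreducible admissible representation of $\GL_N(F_\nu)$ with central character matching that forced by $\pi_\nu$; the particular choice is inessential because the twisted Converse Theorem only constrains the global behavior outside $S$. Form the restricted tensor product $\Pi = \otimes'_\nu \Pi_\nu$.

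To invoke the Converse Theorem one must verify, for every irreducible cuspidal automorphic representation $\tau$ of $\GL_k(\A)$ with $1 \le k \le N-1$ and suitably controlled ramification at $S$, that the twisted $L$-function $L(s,\Pi\times\tau)$ is entire, bounded in vertical strips of finite width, and satisfies the expected symmetric functional equation. The generalized doubling integral developed earlier represents this $L$-function as a global Rankin--Selberg-type integral pairing a cusp form in $\pi$ against an Eisenstein series section built from $\tau$, and hence inherits meromorphic continuation and a functional equation from the Eisenstein series. The local theory constructed earlier in the paper -- the definitions and multiplicativity of the local $\gamma$-, $L$-, and $\epsilon$-factors, their matching with the Langlands factors at unramified and archimedean places, and the completed functional equation -- identifies the global product with $\prod_\nu L(s,\pi_\nu\times\tau_\nu)$ and converts the functional equation coming from the doubling integral into the symmetric form required by the Converse Theorem.

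Entirety is enforced by the standard twisting device: for a sufficiently highly ramified idele class character $\eta$, stability of the local $\gamma$-factors under highly ramified twists removes the finite list of possible poles concentrated at $S$, so that $L(s,\pi\times(\tau\otimes\eta))$ is entire; boundedness in vertical strips follows from the same property of the Eisenstein series together with routine estimates on the local factors. The Converse Theorem then produces an automorphic representation $\Pi'$ of $\GL_N(\A)$ with $\Pi'_\nu \cong \Pi_\nu$ for all $\nu \notin S$; at the archimedean and unramified finite places this means $\Pi'_\nu$ is the local functorial lift of $\pi_\nu$, so $\Pi'$ is a global functorial lift of $\pi$.

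The principal obstacle is the local theory at the ramified finite and archimedean places: one must establish that the $\gamma$-factors defined by the local doubling integrals satisfy the full axiomatic package -- multiplicativity under parabolic induction, stability under highly ramified twists, matching with the local Langlands $\gamma$-factors where those are known, and an appropriate archimedean characterization -- so that the global functional equation of the doubling integral translates into the symmetric form demanded by the Converse Theorem. This is precisely what the local portion of the paper supplies, and with that input the global argument parallels the template of Cogdell--Kim--Piatetski-Shapiro--Shahidi in the generic case, with the doubling integral replacing the Langlands--Shahidi construction.
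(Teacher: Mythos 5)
Your overall strategy matches the paper: build a candidate $\Pi$ place by place, twist by a highly ramified $\eta$, verify that the twisted $L$-functions are nice, identify them with $L(s,\Pi\times\tau)$ using the local theory, and invoke the Converse Theorem. You also correctly flag that the heart of the matter is establishing the axiomatic package for the local $\gamma$-factors.

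However, the proposal mislocates the main analytic difficulty in the entirety step, and therefore elides the two arguments the paper actually needs. You write that stability under highly ramified twists ``removes the finite list of possible poles concentrated at $S$,'' but stability only handles the places in $S_\pi$ (where $\pi_\nu$ is ramified): there $\eta_\nu$ forces $L(s,\pi_\nu\times\tau_\nu)=1$ and matching with $L(s,\Pi_\nu\times\tau_\nu)$. The places that cause trouble are those in $S\setminus S_\pi$, where $\pi_\nu$ is \emph{unramified} but $\tau_\nu=\eta_\nu(\tau_0)_\nu$ is ramified. Stability does nothing there, and the local factors $L(s,\pi_\nu\times\tau_\nu)$ can genuinely have poles in $\Real(s)\ge 1/2$. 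One must show that these poles are also produced, with the right multiplicity, by the local doubling integrals with \emph{entire} sections; this is delicate precisely because entire sections are generally not enough to realize all poles of a g.c.d.-type $L$-factor (one ordinarily needs ``good sections''), and it requires the ``producing poles'' results (Lemmas~\ref{lemma:producing poles} and \ref{lemma:archimedean producing poles} and Claim~\ref{claim:producing poles GL2}), combined with the observation that for unitary $\tau$ entire sections suffice in the half plane $\Real(s)\ge 1/2$.

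Separately, you treat the holomorphy of the Eisenstein series and the boundedness in vertical strips as ``routine,'' but they are not. The paper explicitly notes that adapting Kim--Shahidi's entirety proof appears to depend on $\pi$ being generic. Instead, Theorem~\ref{theorem:series is holomorphic in half plane} establishes holomorphy of $E(h;s,f)$ in $\Real(s)\ge 1/2$ via an inductive constant-term formula, an analysis of cuspidal exponents showing that any would-be pole in $1/2<\Real(s)<1$ forces the leading Laurent coefficient to be square-integrable, and a contradiction with square-integrability derived from the central character of the inducing data under the highly ramified twist (via \cite[\S III.3.1]{MW2}). Boundedness in vertical strips similarly requires the Gelbart--Lapid refinement using M\"uller's bounds on the growth of Eisenstein series, not just estimates on local factors. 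Finally, a small but substantive detail: at $\nu\in S_\pi$ you should take $\Pi_\nu$ to be irreducible \emph{generic} (and, in the $\GSpin$ case, unramified with the prescribed central character), since the Rankin--Selberg local factors and stability theorems being compared are those for generic representations of $\GL_N$.
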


By a basic result of Langlands \cite{BJ1977}, any irreducible automorphic representation of $G(\A)$ is a constituent of the parabolic induction of an irreducible cuspidal automorphic representation of a Levi subgroup. Thus Theorem~\ref{theo:globl functorial lift} immediately extends to any irreducible automorphic representation.

Let $\pi$ be an irreducible cuspidal automorphic representation of $G(\A)$ and let $\Pi$ be a weak functorial transfer of $\pi$.
In general, $\Pi$ is not uniquely determined by $\pi$. However, one may specify $\Pi$ uniquely by requiring it to be an isobaric sum
$\Pi_1\boxplus\ldots\boxplus\Pi_l$ of (not necessarily unitary) cuspidal automorphic representations $\Pi_i$ of $\GL_{n_i}(\A)$ with
$n_1+\ldots+n_l=N$. By the classification of Jacquet and Shalika \cite[Theorem~4.4]{JS2}, the representations $\Pi_i$ are uniquely determined by $\pi$, up to permutation.

As opposed to the case where $\pi$ is globally generic, the isobaric transfer is not necessarily compatible
with the local transfer predicted by the local Langlands conjectures, even when the latter are known.
This phenomenon, first observed by Langlands \cite{LanglandsEin1979}, was the point of departure of Arthur's notion of $A$-packets (\cite{Arthur1984}).

Theorem~\ref{theo:globl functorial lift} is proved using the Converse Theorem (see below).
An artifact of the proof is that with our method $\Pi_{\infty}$ is the transfer of $\pi_{\infty}$
via the local Langlands correspondence for real groups (\cite{La3}). On the other hand, this means that $\Pi$
obtained with our method is not necessarily the isobaric one.

In the local setup we define a ``coarse transfer" from irreducible representations of $G(F_{\nu})$ to irreducible representations of $\GL_N(F_{\nu})$, in terms of twisted $\gamma$-factors (see \S~\ref{coarse transfer}). In Theorem~\ref{theorem:weak transfer fixes gamma} we observe that if $\Pi$ is a weak functorial transfer of $\pi$, then $\Pi_{\nu}$ is a coarse transfer of $\pi_{\nu}$ for all places.
In particular, for any weak transfer $\Pi$ of $\pi$, the supercuspidal support of $\Pi_{\nu}$ is determined by $\pi_{\nu}$ for all finite places $\nu$.
This is in accordance with a result of M{\oe}glin \cite[Corollaire~4.2]{Moeglin2009} stating that the parameters of two local
$A$-packets that intersect have the same supercuspidal support. Another consequence of Theorem~\ref{theo:globl functorial lift} is that a coarse transfer always exists (Corollary~\ref{corollary:local converse thm}).

As noted above, for symplectic and quasi-split orthogonal groups, the weak functorial transfer of automorphic representations follows directly from \cite[\S~1.5]{Arthur2013}. In fact, as explained by Shin in \cite{Shin2023} as far as the weak transfer is concerned one can simplify the argument of
\cite{Arthur2013} and extend it to cover all (possibly non-quasi-split) classical groups.
To do so one still needs the stabilization of the trace formula and the twisted one
and, consequently, some variants of the fundamental lemma, including ones which are not yet available in the literature.

Our proof is arguably simpler, at least as far as its prerequisites are concerned.
On the downside, our results and techniques do not provide information about the image of the weak functorial transfer
or the structure of $A$-packets, nor do they address the trace identities for local $A$-packets.

We note that Theorem~\ref{theo:globl functorial lift} for general spin groups already implies the same result for special orthogonal groups, because representations of special orthogonal groups
are simply representations of general spin groups with a trivial central character.

We expect our techniques to be applicable to quasi-split or even non-quasi-split classical groups as well, see below.

In the important special case of globally generic cuspidal automorphic representations of quasi-split classical groups, i.e.,
those representations admitting a nonzero Whittaker--Fourier coefficient, Cogdell \textit{et al.} \cite{CKPS2,CKPS,CPSS} proved the existence of a (strong) functorial transfer to $\GL_N(\A)$. (In the globally generic case the isobaric transfer is compatible with the local transfer predicted by the local Langlands correspondence.)
In their proof they used the Converse Theorem of Cogdell and Piatetski-Shapiro \cite{CPS3,CPS1999} (more precisely, the version in \cite[\S~2]{CKPS2}), in order to reduce the existence of a transfer to proving the
analytic properties of $L$-functions for pairs of cuspidal automorphic representations of $G(\A)$ and $\GL_k(\A)$. The pertinent $L$-functions in those works were studied using the Whittaker--Fourier coefficients of Eisenstein series, that is, using the so-called Langlands--Shahidi method, developed in \cite{La2,La5,Sh2,Sh4,Shahidi1985,Sh3}; see also
Shahidi \cite{ShahidiBook2010} and the references therein.
This method is limited to generic representations and accordingly, their results were limited to globally generic representations of $G(\A)$.
Asgari and Shahidi \cite{AsgSha,AsgSha2} extended functoriality in the generic case to quasi-split general spin groups.

As in those works, we use the Converse Theorem to reduce the proof of Theorem~\ref{theo:globl functorial lift} to the study of $L$-functions.
But in our case we do not assume that the representation $\pi$ of $G(\A)$ is generic. To use the Converse Theorem, we analyze the $L$-functions via the generalized doubling method developed in \cite{CFGK2,CFK2022,CFKmodels,DimaKaplan}, which
is applicable to all cuspidal automorphic representations of $G(\A)$.

For a discussion of the Converse Theorem and how we apply it see \S~\ref{The converse theorem}. In a nutshell,
given an irreducible cuspidal (generic or otherwise) automorphic representation $\pi=\otimes'_{\nu}\pi_{\nu}$ of $G(\A)$, one may define an
irreducible admissible representation $\Pi=\otimes'_\nu \Pi_\nu$ of $\GL_N(\A)$, where for almost all places $\nu$,
$\Pi_{\nu}$ is the local transfer of $\pi_{\nu}$. The main problem is to prove that there is an automorphic representation $\Pi'$ of $\GL_N(\A)$ which agrees with $\Pi$ at almost  all places.
The Converse Theorem provides criteria for the existence of $\Pi'$ in terms of certain twisted $L$-functions $L(s,\Pi\times\tau)$, where
$\tau$ varies over a class of cuspidal automorphic representations of $\GL_k(\A)$, $1\leq k < N$.

In order to verify these criteria we define a completed $L$-function $L(s,\pi\times\tau)$ such that $L(s,\pi\times\tau)=L(s,\Pi\times\tau)$.
This was accomplished in \cite{CFK2022} via the generalized doubling method, which constructs an integral representation for the (partial) $L$-function.
The remaining obstacle is to prove that $L(s,\pi\times\tau)$ is entire, under suitable conditions.
Using the Eulerian form of the global generalized doubling integral and the computation of the local integrals with unramified data, this task can be divided into two parts.

The first is to prove that, under certain local assumptions, each pole in $\Real(s)\geq1/2$ of the local $L$-factor $L(s,\pi_{\nu}\times\tau_{\nu})$ is accounted for by a pole of a local generalized doubling integral. This is our main local result, and it requires a new family of local integrals which are, in light of
\cite{me14,GinzburgSoudry2023}, interesting in their own right. See \S~\ref{Producing poles} and Appendix~\ref{appendix:GL2 poles}.

The second is to prove that the global generalized doubling integral is holomorphic in
$\Real(s)\geq1/2$. This follows by showing that the Eisenstein series which appears in this integral is holomorphic there,
under the condition that $\absdet^{it}\chi_{\pi}\tau\not\cong{\tau}^{\vee}$ for all $t\in\R$. Here $\chi_{\pi}$ is, essentially, the restriction of the central character of $\pi$ to the identity component of the center of $G(\A)$.
This Eisenstein series was studied by Jiang, Liu and Zhang \cite{JiangLiuZhang2013} (for classical groups) who described its poles in the self-dual case.
The method of \cite{JiangLiuZhang2013} was to use induction in order to reduce the question to one on the discrete spectrum, at which point they
employed Arthur's classification \cite{Arthur2013}. In order to prove that there are no poles in the non-self-dual case we use some basic
observations of Langlands on the discrete spectrum, and avoid using \cite{Arthur2013}. See \S~\ref{section:complete L functions for twists}.

In the generic case, Ginzburg, Rallis and Soudry (see e.g., \cite{Soudry6,RGS}) developed the descent method and used it to describe the image of
functoriality. Together with the aforementioned works of Cogdell \textit{et al.} \cite{CKPS2,CKPS,CPSS}, they provided a correspondence between the generic part of $\Pi_{\disc}(G)$ (which is cuspidal) and a certain subset $\tilde{\Psi}_{\temp}(G)$ of $\tilde{\Psi}_2(G)$. Moreover, this correspondence is compatible
with the local Langlands correspondence. Refer to \cite[Ch.~11]{RGS} for precise statements.
We also mention that based on the generalized doubling method, Ginzburg and Soudry \cite{SoudryGinzburg} developed the ``double descent" which
provides an explicit construction of $\Pi_{\autparam}(G)$ independently of Arthur's work, at least for cuspidal $\autparam\in\tilde{\Psi}_2(G)$.

One earlier result on functoriality for non-generic representations was obtained by
Pitale, Saha and Schmidt  \cite{PitaleSahaSchmidt2014}. In that work the authors used the integral representation of Furusawa \cite{Furusawa1993} and the Converse Theorem of Cogdell and Piatetski-Shapiro \cite{CogdellPS1996}, to produce a strong functorial transfer of certain non-generic cuspidal automorphic representations of $\GSp_4(\A)$ to cuspidal automorphic representations of $\GL_4(\A)$ (and $\GL_5(\A)$).
However, it was previously unknown whether an $L$-functions approach could be used to study functoriality in the generality presented here.

The classical doubling method, by which we mean the case of $k=1$, was first introduced by Piatetski-Shapiro and Rallis \cite{PSR}, with the local theory obtained by Lapid and Rallis \cite{LR}. That construction was applicable to all classical groups, including the symplectic group, orthogonal groups, and unitary groups,
and was later extended to the metaplectic group (the double cover of the symplectic group) by Gan \cite{Gan} and to unitary groups of Hermitian or skew-Hermitian forms over quaternion algebras by Yamana \cite{Yamana} (see also \cite{Kakuhama2020}). We expect that the generalized doubling method, that is, the generalization to arbitrary $k$ described in \cite{CFGK2,CFK2022,CFKmodels,DimaKaplan} can be extended to these groups. Granted that, the techniques presented here should lead to additional cases of weak functorial transfers, for these groups.

The doubling method is not limited to the study of functoriality. Thus far, the case $k=1$ has had an important role in a wide range of problems, including the theta correspondence, e.g., \cite{KudlaRallis1994,HKS,GanIchino2014,Yamana}, and $p$-adic $L$-functions \cite{HarrisLiSkinner2006,EischenHarrisLiSkinner}. It would be interesting to see if similar applications can be found for $k>1$; for example, see the work of Ginzburg and Soudry \cite{GinzburgSoudry2023} towards a new regularized Siegel--Weil type formula.

Historically, the idea to combine a converse theorem with an integral representation in order to obtain a lift (of modular forms) goes back
to Hecke, Weil and Shimura. Beginning in the 1980s, the success of the theory of Rankin--Selberg integrals for general linear groups by Jacquet, Piatetski-Shapiro and Shalika \cite{JPSS2,JPSS3,JPSS}, and by Jacquet and Shalika \cite{JS2,JS1}, and the emerging results on converse theorems, inspired several works on Rankin--Selberg integrals for $G\times\GL_k$, by authors including Gelbart, Ginzburg, Piatetski-Shapiro, Rallis, and Soudry. See for example \cite{GPS,G,Soudry,Soudry3} (more recent works include \cite{BS,GJRS,JZ}). Eventually the results on functoriality for globally generic cuspidal automorphic representations of classical groups were obtained via the Langlands--Shahidi method, while integral representations have found other applications. In this work we show that the approach to transfer problems via integral representations combined with the Converse Theorem is also effective for non-generic automorphic representations.

\subsection*{Acknowledgments}
We are very happy to thank Jeffrey Adams, Mahdi Asgari, Laurent Clozel, Jim Cogdell, Gal Dor, Jan Frahm, David Ginzburg, Dmitry Gourevitch,
Joseph Hundley, Avner Kiro, Zemer Kosloff, Baiying Liu, Goran Mui{\'c}, Dipendra Prasad, Freydoon Shahidi, David Soudry, David Vogan, and Lei Zhang for numerous valuable and inspiring discussions.
The authors would like to thank Erez Lapid for extremely useful advice throughout this project, and in particular for his help with the proof of Theorem~\ref{theorem:series}.
Finally, we are grateful to the referees for their interest in this work and helpful remarks, which helped improve the presentation.

\tableofcontents

\section{The groups}\label{the groups}

Throughout, $F$ is a field of characteristic $0$. If $\mathcal{G}$ is a connected reductive linear algebraic group defined and split over $F$, we fix in $\mathcal{G}$ a
maximal torus $T_{\mathcal{G}}$ and a Borel subgroup $B_{\mathcal{G}}=T_{\mathcal{G}}\ltimes N_{\mathcal{G}}$ where $N_{\mathcal{G}}$ is the unipotent radical. The Weyl group of $T_{\mathcal{G}}$ in $\mathcal{G}$ is denoted $W_{\mathcal{G}}$, and $\WeylElement_{\mathcal{G}}$ is the longest Weyl element in $W_{\mathcal{G}}$.
For a standard parabolic subgroup $R<\mathcal{G}$ we denote by $M_R$ its unique Levi subgroup containing $T_{\mathcal{G}}$, then
$R=M_R\ltimes U_R$ where $U_R$ is the unipotent radical of $R$. The modulus character of $R$ is denoted $\delta_R$
and the unipotent subgroup opposite to $U_R$ is denoted $U_R^-$. Also set $R^-=M_R\ltimes U_R^-$, the parabolic subgroup opposite to $R$ with a Levi part $M_R$. Let $C_{\mathcal{G}}$ be the center of $\mathcal{G}$; similarly for any $X<\mathcal{G}$, let $C_{X}$ be the center of $X$. We denote the complex dual group of $\mathcal{G}$ by $\Dual{\mathcal{G}}$, e.g., $\Dual{\GL}_l=\GL_l(\C)$.

For the group $\GL_{l}$, $T_{\mathcal{\GL}_l}$ is taken to be the diagonal torus and $B_{\mathcal{\GL}_l}$ is the subgroup of upper triangular invertible matrices.
If $\beta$ is a composition of $l$, that is, an ordered tuple of positive integers whose sum is $l$, we let $R_{\beta}=M_{\beta}\ltimes U_{\beta}$ denote the standard parabolic subgroup of $\GL_l$ corresponding to $\beta$. Also let $\Mat_{a\times b}$ be the space of $a\times b$ matrices, $\Mat_{a}=\Mat_{a\times a}$; the transpose of $x\in\Mat_{a\times b}$ is denoted ${}^tx$, and when $a=b$, $\tr$ is the trace map. For $g\in\GL_l(F)$, put $g^*=J_l{}^tg^{-1}J_l$ where $J_l$ is the permutation matrix whose $(i,l-i+1)$-th coordinates are $1$. A block-diagonal matrix in $\GL_l$ will be denoted $\diag(a_1,\ldots,a_d)$ where $a_i\in\GL_{k_i}$, $k_1+\ldots+k_d=l$.

We take $\mathcal{G}_l$ to be one of the groups: $\Sp_{l}$, $\SO_l$ or $\GSpin_l$, defined and split over $F$, where for $\Sp_l$, $l$ must be even.
First assume that $\mathcal{G}_l$ is either $\Sp_{l}$ or $\SO_l$.
For definiteness, we take $J_{\Sp_l}=J_l\diag(-I_{l/2},I_{l/2})$ and $J_{\SO_l}=J_l$, and define
$\mathcal{G}_l=\{g\in\SL_{l}:{}^tgJ_{\mathcal{G}_l}g=J_{\mathcal{G}_l}\}$.
We then fix $T_{\mathcal{G}_l}=\mathcal{G}_l\cap T_{\GL_l}$ and $B_{\mathcal{G}_l}=\mathcal{G}_l\cap B_{\GL_l}$. Note that $\Sp_{0}$, $\SO_{0}$ and $\SO_{1}$ are trivial.

For $\mathcal{G}_l=\GSpin_{l}$  we follow \cite{AsgSha,HS} and define it using based root datum. Let $n=\lfloor l/2\rfloor$,
$X=\bigoplus_{i=0}^n\Z e_i$, $X^{\vee}=\bigoplus_{i=0}^n\Z e_i^{\vee}$ and fix the standard $\Z$-pairing $\langle,\rangle$ on $X\times X^{\vee}$. Assume $l>2$. If $l$ is even define
\begin{align*}
&\Delta=\{e_1-e_2,\ldots,e_{n-1}-e_n,\,e_{n-1}+e_n\},\\
&\Delta^{\vee}=\{e_1^{\vee}-e_2^{\vee},\ldots,e_{n-1}^{\vee}-e_n^{\vee},\,e_{n-1}^{\vee}+e_n^{\vee}-e_0^{\vee}\}.
\end{align*}
If $l$ is odd define
\begin{align*}
&\Delta=\{e_1-e_2,\ldots,e_{n-1}-e_n,\,e_n\},\\
&\Delta^{\vee}=\{e_1^{\vee}-e_2^{\vee},\ldots,e_{n-1}^{\vee}-e_n^{\vee},\,2e_n^{\vee}-e_0^{\vee}\}.
\end{align*}
Otherwise $l\leq2$ and the sets $\Delta$ and $\Delta^{\vee}$ are taken to be empty.
The group $\mathcal{G}_l=\GSpin_{l}$ is the unique $F$-split $F$-group whose based root datum is $(X,\Delta,X^{\vee},\Delta^{\vee})$ (see \cite{AsgSha,HS} and, e.g., \cite[Ch.~16]{Springer1998}). This definition already fixes $T_{\mathcal{G}_l}$ and $B_{\mathcal{G}_l}$. Note that $\GSpin_0=\GSpin_1=\GL_1$ and $\GSpin_2=\GL_1\times\GL_1$. Denote $C_{\mathcal{G}_l}^{\circforgspin}=e_0^{\vee}(\GL_1)$, it is a subgroup of $C_{\mathcal{G}_l}$. When $l>2$, $C_{\mathcal{G}_l}^{\circforgspin}$ is in fact the identity component of $C_{\mathcal{G}_l}$, see e.g., \cite[Proposition~2.3(a)]{AsgSha}. Note also that $C_{\mathcal{G}_l}^{\circforgspin}$ is the kernel of a projection $\mathcal{G}_l\rightarrow\SO_l$ (see \cite[\S~4.3]{HS}). Dually, $e_0$ is the similitude character of $\Dual{\mathcal{G}}_l$.

For uniformity of notation, we take $C_{\mathcal{G}_l}^{\circforgspin}$ to be the trivial group when $\mathcal{G}_l\ne\GSpin_l$.

For the groups $\mathcal{G}_{2l}=\SO_{2l}$ and $\GSpin_{2l}$, when $B_{\mathcal{G}_{2l}}$ is fixed, we let $\Specialjmath$ denote the outer involution corresponding to the permutation of the last two simple roots in the Dynkin diagram. For uniformity, if $\mathcal{G}_{2l}$ is not one of these groups we take $\Specialjmath$ to be trivial.
There is a unique standard Siegel parabolic subgroup in $\mathcal{G}_l$ except for $\mathcal{G}_{2l}=\SO_{2l}$ and $\GSpin_{2l}$, for $l>1$, in which case there are $2$ such subgroups and if $P$ denotes one, ${}^{\Specialjmath}P$ is the other.

We list the groups $\Dual{\mathcal{G}}_l$:
\begin{align*} \renewcommand*{\arraystretch}{1.4}
  \begin{array}{|c||cccccc|}
    \hline
    \mathcal{G}_l&\Sp_{2n}&\SO_{2n}&\SO_{2n+1}&\GSpin_{2n}&\GSpin_{2n+1}&\\ \hline\
    \Dual{\mathcal{G}}_l&\SO_{2n+1}(\C)&\SO_{2n}(\C)&\Sp_{2n}(\C)&\GSO_{2n}(\C)&\GSp_{2n}(\C)&\\ \hline
  \end{array}
\end{align*}

Let $M$ be a standard Levi subgroup of $\mathcal{G}_{l}$. Then $M\cong M_{\beta}\times\mathcal{G}_{l-2r}$ where $\beta=(\beta_1,\ldots,\beta_d)$ is a composition of $r\geq0$ and $2r\leq l$. We fix an isomorphism $i_M:M\to M_{\beta}\times \mathcal{G}_{l-2r}$ as follows. First, when $\mathcal{G}_l\ne\GSpin_l$,
for $a_i\in\GL_{\beta_i}$, $1\leq i\leq d$, and $g\in\mathcal{G}_{l-2r}$,
\begin{align*}
i_M(\diag(a_1,\ldots,a_d,g,a_d^*,\ldots,a_1^*))=(a_1,\ldots,a_d,g).
\end{align*}

For $\mathcal{G}_l=\GSpin_l$ and $l>2$, starting with $d=1$, the root datum of $M$ can be written as a direct sum of the root data for $\GL_r$ and for $\mathcal{G}_{l-2r}$, and we note that the character and co-character lattices of $\mathcal{G}_{l-2r}$ are identified with $\Span_{\Z}\{e_0,e_{r+1},\ldots,e_n\}$
and $\Span_{\Z}\{e_0^{\vee},e^{\vee}_{r+1},\ldots,e_n^{\vee}\}$, resp., where for $r=n$ these lattices become $\Z e_0$ and $\Z e_0^{\vee}$. This fixes $i_M$ for $d=1$. The case $d>1$ is now described using the standard embedding of $M_{\beta}$ in $\GL_r$. We note that in the dual picture, when $l$ is even
$\Dual{\mathcal{G}}_l=\GSO_{l}(\C)$ is the identity component of $\GO_l(\C)=\{g\in\GL_l(\C):{}^tgJ_{\SO_l}g=e_0(g)J_{\mathcal{\SO}_l}\}$ and when $2\nmid l$,
$\GSp_{l-1}(\C)=\{g\in\GL_{l-1}(\C):{}^tgJ_{\Sp_{l-1}}g=e_0(g)J_{\Sp_{l-1}}\}$. We then have the isomorphism $\Dual{i}_M:\widehat{M}\to\widehat{M}_{\beta}\times \widehat{\mathcal{G}}_{l-2r}$: for $a_i\in\Dual{\GL}_{\beta_i}$ and $g'\in \Dual{\mathcal{G}}_{l-2r}$,
\begin{align*}
\Dual{i}_M(\diag(a_1,\ldots,a_d,g',e_0(g')a_d^*,\ldots,e_0(g')a_1^*))=(a_1,\ldots,a_d,g').
\end{align*}

In addition, let $\Upsilon_l$ be the character of $\GSpin_l$ normalized by $\langle \Upsilon_l,e_0^{\vee}\rangle=2$. The kernel of $\Upsilon_l$ is $\Spin_l$. We note that
\begin{align*}
\Upsilon_l\circ i_M^{-1}(a_1,\ldots,a_d,g)=\Upsilon_{l-2k}(g)\prod_{i=1}^d\det(a_i).
\end{align*}
When $\mathcal{G}_l\ne\GSpin_l$ we take $\Upsilon_l$ to be trivial.

Let $G=\mathcal{G}_c$ for an integer $c>1$. Let ${}^LG=\Dual{G}\rtimes \Gamma_F$ be the Langlands dual group of $G$, where $\Gamma_F$ is the absolute Galois group of $F$.
Since $G$ is split over $F$, the action of $\Gamma_F$ on $\Dual{G}$ is trivial. Thus we may work with the simpler group $\Dual{G}$. We let $\faithful:\Dual{G}\to\Dual{\GL}_N$ be the natural embedding, where $N=c+1$ if $G=\Sp_c$ and otherwise $N=2\lfloor c/2\rfloor$.

\section{The Converse Theorem}\label{The converse theorem}

In order to obtain the weak functorial transfer we will use the Converse Theorem for $\GL_N$, as in \cite{CKPS2,CKPS,AsgSha,CPSS,AsgSha2}. The roots of this method go back to
Hecke, who formulated a criterion for a Dirichlet series $D(s)$ to arise from a full level cusp form (see \cite{CPS3} and the references therein).
The conditions, now familiarly called ``niceness properties", were stated in terms of the associated completed $L$-function.

For automorphic representations, the case of $\GL_2$ was obtained by Jacquet and Langlands \cite{JL}. The general case of $\GL_N$ was proved by Cogdell and Piatetski-Shapiro \cite{CPS3,CPS1999}, and we will use the following minor modification of their theorem from \cite{CKPS2}. This version incorporated a twist of the representations of $\GL_k(\A)$
by a fixed Hecke character $\eta$ (see below). While incorporating this character did not complicate the proof of the Converse Theorem, it did play a
key role in the application of the theorem in \textit{ibid.}~ and in the follow-up works mentioned above. Namely, the twists by $\eta$ were important for the stability results and in order to prove that the $L$-functions were entire. We will use $\eta$ in a similar way, as we explain below, and also in order to simplify the analysis of the poles of the local $L$-factors.

Let $F$ be a number field with its ring of adeles $\A$.
Let $\eta$ be a unitary Hecke character of $\A^*$ and fix a nonempty finite set $S$ of finite places of $F$. For each $k\geq1$, let $\mathscr{A}'(S,k)$ denote the set
of irreducible unitary cuspidal automorphic representations $\tau'$ of $\GL_k(\A)$ such that for all $\nu\in S$, the local component $\tau'_{\nu}$ of $\tau'$ at $\nu$ is unramified. For
any automorphic representation $\sigma$ of $\GL_k(\A)$, set $\eta\sigma(g)=\eta(\det g)\sigma(g)$. Define
\begin{align*}
\mathscr{A}(S,k,\eta)=\{\eta\tau':\tau'\in \mathscr{A}'(S,k)\},\qquad
\mathscr{A}(S,\eta)=\coprod_{k=1}^{N-1}\mathscr{A}(S,k,\eta).
\end{align*}
(While the Converse Theorem of \cite[\S~2]{CKPS2} was stated without restricting $\mathscr{A}(S,\eta)$ to unitary representations,
it is clearly sufficient to consider only the unitary ones.)

Let $\psi=\otimes'_{\nu}\psi_{\nu}$ be a nontrivial additive character of $\A$ which is trivial on $F$.
Assume we have, for each place $\nu$ of $F$, an irreducible admissible representation $\Pi_{\nu}$ of $\GL_N(F_{\nu})$, such that for almost all $\nu$,
$\Pi_{\nu}$ is unramified. For any $\tau\in\mathscr{A}(S,\eta)$,
the local factors $L(s,\Pi_{\nu}\times\tau_{\nu})$ and $\epsilon(s,\Pi_{\nu}\times\tau_{\nu},\psi_{\nu})$ are defined by \cite{JPSS,JS3} (whether $\Pi_{\nu}$ is generic or not).
We can then define the formal Euler products $L(s,\Pi\times\tau)=\prod_{\nu}L(s,\Pi_{\nu}\times\tau_{\nu})$ and
$\epsilon(s,\Pi\times\tau,\psi)=\prod_{\nu}\epsilon(s,\Pi_{\nu}\times\tau_{\nu},\psi_{\nu})$. In particular, $L(s,\Pi)$ is defined formally as an Euler product, but if we assume
that this Euler product is absolutely convergent in some right half plane, then by \cite[Lemma~2.2]{CPS3} so are the Euler products $L(s,\Pi\times\tau)$. Moreover, if the central character of $\Pi$ is trivial on $F^*$, by \cite[Lemma~2.1]{CPS3} $\epsilon(s,\Pi\times\tau,\psi)$ is independent of $\psi$ and we can denote
$\epsilon(s,\Pi\times\tau)=\epsilon(s,\Pi\times\tau,\psi)$.

\begin{theorem}\cite[\S~2]{CKPS2}\label{theorem:cnv}
Assume $\Pi=\otimes'_{\nu}\Pi_{\nu}$ is an irreducible admissible representation of $\GL_N(\A)$, whose central character is invariant under $F^*$, and such that
$L(s,\Pi)$ is absolutely convergent in $\Real(s)\gg0$. Furthermore assume that for each $\tau\in\mathscr{A}(S,\eta)$, the $L$-function
$L(s,\Pi\times\tau)$ is nice, that is,
\begin{enumerate}[leftmargin=*]
  \item\label{It:analytic cont} $L(s,\Pi\times\tau)$ and $L(s,\Pi^{\vee}\times\tau^{\vee})$ admit analytic continuation to $\C$.
  \item\label{It:BVS} $L(s,\Pi\times\tau)$ and $L(s,\Pi^{\vee}\times\tau^{\vee})$ are bounded in vertical strips of finite width.
  \item\label{It:functional eq} $L(s,\Pi\times\tau)=\epsilon(s,\Pi\times\tau)L(1-s,\Pi^{\vee}\times\tau^{\vee})$.
\end{enumerate}
Then there is an irreducible automorphic representation $\Pi'$ of $\GL_N(\A)$ such that $\Pi'_{\nu}\cong\Pi_\nu$ for all $\nu\not\in S$.
\end{theorem}

We explain how to apply this theorem to obtain a weak functorial transfer from the group $G$ to $\GL_N$, following the paradigm of \cite{CKPS2}
(for notation see \S~\ref{the groups}).
\begin{definition}\label{Def:weak transfer}
Let $\pi$ be an irreducible automorphic representation of $G(\A)$ and let $\Pi$ be an
irreducible automorphic representation of $\GL_N(\A)$. We say that $\Pi$ is a weak functorial transfer of $\pi$ if for almost all finite places $\nu$ of $F$ where $\pi_{\nu}$ is unramified, $\Pi_{\nu}$ is the local functorial transfer of $\pi_{\nu}$ dictated by $\faithful$, and at the infinite places $\nu$, the infinitesimal
character of $\Pi_{\nu}$ is determined, via $\faithful$, by that of $\pi_{\nu}$\footnote{The infinitesimal character of $\pi_{\nu}$ is encoded into the $L$-parameter of $\pi_{\nu}$, see Nair and Prasad \cite[Lemma~1]{NairPrasad2021}.}. We then say that $\pi$ has a weak functorial transfer to $\GL_N(\A)$.
\end{definition}
Let $\pi=\otimes'_{\nu}\pi_{\nu}$ be an irreducible unitary cuspidal automorphic representation of $G(\A)$. Our aim is to construct a weak functorial transfer
of $\pi$. Let $S_{\pi}$ be the (finite) set of finite places $\nu$ such that $\pi_{\nu}$ is not unramified; however
if $\pi$ is unramified at all finite places, then we select an arbitrary finite place to include in $S_\pi$ so that it is not empty.
We can certainly define $\Pi_{\nu}$ at all finite places $\nu\notin S_{\pi}$, and at the infinite places we take $\Pi_{\nu}$ to be the archimedean transfer (see
\S~\ref{local transfer results}). For each $\nu\in S_{\pi}$, we take an arbitrary irreducible $\Pi_{\nu}$, with a certain condition on the central character
if $G=\GSpin_c$ (see Theorem~\ref{theorem:pi and Pi for ramified twisted} below). Define $\Pi=\otimes'_{\nu}\Pi_{\nu}$.
Our goal is to show the existence of an automorphic representation $\Pi'$ of $\GL_N(\A)$ such that $\Pi'_{\nu}\cong\Pi_\nu$ for all $\nu\not\in S_{\pi}$. To this end we will
choose $\eta$ which is sufficiently ramified at the places of $S_{\pi}$.
Now we need to prove the conditions of the theorem for all $\tau\in\mathscr{A}(S_{\pi},\eta)$, and the existence of $\Pi'$ will follow.

The generalized doubling method provides a framework for the $L$-functions $L(s,\pi\times\tau)$ via an integral representation. It also gives rise to local factors. The definition is compatible with the local transfer outside $S_{\pi}$, which means that (for any $\eta$, even the trivial)
\begin{align*}
L(s,\pi_{\nu}\times\tau_{\nu})=L(s,\Pi_{\nu}\times\tau_{\nu}),\qquad
\epsilon(s,\pi_{\nu}\times\tau_{\nu},\psi_{\nu})=\epsilon(s,\Pi_{\nu}\times\tau_{\nu},\psi_{\nu}),\qquad\forall\nu\not\in S_{\pi}.
\end{align*}
Thus, at least $L^{S_{\pi}}(s,\pi\times\tau)=L^{S_{\pi}}(s,\Pi\times\tau)$. This is almost enough, except we need to understand the completed $L$-functions.
Here stability kicks in, and the twists by $\eta$ imply that
\begin{align*}
L(s,\pi_{\nu}\times\tau_{\nu})=L(s,\Pi_{\nu}\times\tau_{\nu})=1,\qquad
\epsilon(s,\pi_{\nu}\times\tau_{\nu},\psi_{\nu})=\epsilon(s,\Pi_{\nu}\times\tau_{\nu},\psi_{\nu}),\qquad\forall\nu\in S_{\pi}.
\end{align*}
Thus $L(s,\pi\times\tau)=L(s,\Pi\times\tau)$ and $\epsilon(s,\pi\times\tau)=\epsilon(s,\Pi\times\tau)$, and it remains to show that $L(s,\pi\times\tau)$ is nice.

Building on the results of \cite{CFK2022},
it remains to prove that $L(s,\pi\times\tau)$ is entire and in fact, since by \textit{ibid.}~ we already know the global functional equation,
it is enough to show that $L(s,\pi\times\tau)$ is holomorphic in $\Real(s)\geq1/2$.

Our starting point is the Eulerian form of the global integral, which can be written as
\begin{align}\label{eq:func intro}
b^{S}(s,c,\tau\otimes\chi_{\pi})Z(s,\varphi_1,\varphi_2,f)=
L(s,\pi\times\tau)\prod_{\nu\in S}\frac{Z(s,\omega_{\nu},f_{\nu})}{L(s,\pi_{\nu}\times\tau_{\nu})}.
\end{align}
Here $S$ is a finite set of places, containing $S_{\pi}$ and $S_{\infty}$, outside of which all data are unramified; on the l.h.s.~ (left hand side) the factor $b^S$ is a product of partial $L$-functions depending only on $\tau$ and $\chi_{\pi}$, and $Z(\cdots)$ is the global integral; and on the r.h.s.~ $Z(\cdots)$ is the local integral. See \S~\ref{The local integral}, \S~\ref{the local factors}, \S~\ref{global groups and notation} and \S~\ref{The gbl integral} for notation and precise definitions.
The $L$-functions appearing in $b^S$ are holomorphic in $\Real(s)\geq1/2$ for $\tau\in\mathscr{A}(S,\eta)$, because of the twist by $\eta$.

The local result, Theorem~\ref{theorem:archimedean producing poles}, states that for each $s$ with $\Real(s)\geq1/2$, there are data
$(\omega_{\nu},f_{\nu})$ such that
$Z(s,\omega_{\nu},f_{\nu})/L(s,\pi_{\nu}\times\tau_{\nu})$ is nonzero. Note that
the local $L$-factors are trivial for $\nu\in S_{\pi}$, which means that the challenging case of this result is for $\nu\in S-S_{\pi}$.

The global result, Theorem~\ref{theorem:series}, states that the global integral is holomorphic in
$\Real(s)\geq1/2$. This follows from a general argument on Eisenstein series, because
$\absdet^{it}\chi_{\pi}\tau\not\cong{\tau}^{\vee}$ for all $t\in\R$,
a condition which in our setting holds because $\tau\in\mathscr{A}(S_{\pi},\eta)$ ($\chi_{\pi}$ is defined in \S~\ref{global groups and notation}).

Now for each $s$ such that $\Real(s)\geq1/2$, since the l.h.s.~ of \eqref{eq:func intro} is holomorphic and the finite product of quotients can be made nonzero, we deduce that $L(s,\pi\times\tau)$ is holomorphic in $\Real(s)\geq1/2$ and thereby entire (see Theorem~\ref{theorem:twisting to obtain entire L function}).

For the complete details of our application of the Converse Theorem see \S~\ref{section:Constructing the lift}.

\section{Local theory}\label{Local theory}
We start with the local theory.
In \S~\ref{The local integral} we define the generalized doubling integral and recall its basic properties.
In \S~\ref{the local factors} we describe the functional equation of the integral, and the corresponding $\gamma$-factor.
The main properties of the $\gamma$-factor from \cite{CFK2022} are summarized in Theorem~\ref{theorem:ten commendments}, then we recall the
definitions of the $L$- and $\epsilon$-factors. The ``coarse transfer" is defined in \S~\ref{coarse transfer}.
In \S~\ref{Producing poles} we prove our main local result, Theorem~\ref{theorem:archimedean producing poles}, that
(under certain assumptions) the poles of the $L$-factor in $\Real(s)\geq1/2$ can be obtained using the integral.

\subsection{Notation}\label{local groups and notation}
In this section $F$ is a local field. By convention, for a linear algebraic group $\mathcal{G}$ defined and split over $F$, we denote its $F$-points also by $\mathcal{G}$.
If $F$ is non-archimedean, $\mathcal{O}$ will be its ring of integers, $q$ will denote the
cardinality of its residue field, and an entire (resp., meromorphic) function $\phi(s):\C\rightarrow\C$ will always be an element of
$\C[q^{-s},q^s]$ (resp., $\C(q^{-s})$). For any group $X$, $x,y\in X$ and $Y<X$, set ${}^xy=xyx^{-1}$ and ${}^xY=\{{}^xy:y\in Y\}$.

All representations under consideration here are complex and smooth.
We use the smooth and normalized induction functor.
Occasionally we will use the standard notation $\rtimes$ for the parabolic induction functor from a standard maximal parabolic subgroup
of any group $\mathcal{G}_l$. In detail, if $R<\mathcal{G}_l$ is such a subgroup, then in \S~\ref{the groups} we defined $i_{M_R}:M_R\to\GL_r\times \mathcal{G}_{l-2r}$ ($r\geq0$, $2r\leq l$) and for a representation $\sigma\otimes\pi$ of $\GL_r\times\mathcal{G}_{l-2r}$ we denote $\sigma\rtimes\pi=\Ind_R^{\mathcal{G}_l}((\sigma\otimes\pi)\circ i_{M_R})$. We also use the standard notation $\times$ for the induction functor from any standard parabolic subgroup of $\GL_l$.

An admissible representation over an archimedean field is understood to be admissible Fr\'{e}chet of moderate growth. If $F$ is archimedean, by a supercuspidal representation we always mean a quasi-character of $F^*$. For an admissible representation $\pi$ of $\mathcal{G}$, $\pi^{\vee}$ denotes its contragredient representation. In addition, the unramified setting will only be considered when the field is non-archimedean, and we say that a representation $\pi$ of $\mathcal{G}$ is unramified if it has a nonzero vector fixed by $\mathcal{G}(\mathcal{O})$.

Let $\Irr(\mathcal{G})$ be the set of irreducible admissible representations of $\mathcal{G}$, and when $\mathcal{G}=\GL_l$ we denote
by $\IrrGen(\GL_l)\subset \Irr(\GL_l)$ the subset of generic ones, i.e., those representations admitting a Whittaker model.
Supercuspidal or tempered representations are assumed to be irreducible and admissible.

For an admissible representation $\pi$ of $\GSpin_l$ which admits a central character, we let $\chi_{\pi}$ be the pullback of $\pi$ by $e_0^{\vee}$
viewed as a quasi-character of $F^*$ (which is essentially $\pi|_{C_{\mathcal{G}_{l}}^{\circforgspin}}$).
Using the above notation, if $(\sigma\otimes\pi)\circ i_{M_R}\in\Irr(M_R)$, then $\chi_{\sigma\rtimes\pi}=\chi_{\pi}$. If $\mathcal{G}_l\ne\GSpin_l$ we simply take, for any
admissible representation $\pi$ of $\mathcal{G}_l$, $\chi_{\pi}^d$ to be trivial for any $d\in\tfrac12\Z$.

For a representation $\tau$ of $\GL_k$ and a quasi-character $\eta$ of $F^*$, denote by $\eta\tau$ the twist of $\tau$ by $\eta$, namely the representation of $\GL_k$ on the same space of $\tau$ defined by $\eta\tau(g)=\eta(\det g)\tau(g)$. E.g., $\absdet\tau(g)=|\det{g}|\tau(g)$.

Let $\psi$ be a nontrivial additive character of $F$.

Let $c\geq1$. For $l\geq1$, a generic character of $U_{(c^l)}$ is a character whose stabilizer in $M_{(c^l)}$ is isomorphic to $\GL_c$.
We fix the following generic character $\psi_l$ of $U_{(c^l)}$. Write $u\in U_{(c^l)}$ in the form $u=(u_{i,j})_{1\leq i,j\leq l}$ with $u_{i,j}\in\Mat_c$.
Then $\psi_{l}(u)=\psi(\sum_{i=1}^{l-1}\tr(u_{i,i+1}))$.

Let $k\geq1$ and $\tau\in\IrrGen(\GL_k)$. We recall the definitions of the representations $\rho_c(\tau)$ and their $(k,c)$ models from \cite{CFKmodels,DimaKaplan}.
If $\tau$ is unitary, let $\rho_c(\tau)$ be the generalized Speh representation of $\GL_{kc}$, i.e., the unique irreducible quotient of
$\Ind_{R_{(k^c)}}^{\GL_{kc}}((\tau\otimes\ldots\otimes\tau)\delta_{R_{(k^c)}}^{1/(2k)})$ (\cite{Jac4}). In general if
$\tau=\times_{i=1}^d\absdet^{a_i}\tau_i$ where $\tau_i$ are tempered and $a_1>\ldots>a_d$, define
\begin{align*}
\rho_c(\tau)=\times_{i=1}^d\absdet^{a_i}\rho_c(\tau_i).
\end{align*}
Then by \cite[Theorem~4]{CFKmodels} and \cite[\S~1.4]{DimaKaplan}, $(k^c)$ is the unique maximal orbit in the wave-front set of $\rho_c(\tau)$ and
\begin{align*}
\dim\Hom_{U_{(c^k)}}(\rho_c(\tau),\psi_k)=1.
\end{align*}
In general, an admissible finite-length representation of $\GL_{kc}$ satisfying these properties is called a $(k,c)$ representation (see \cite[\S~1.4]{DimaKaplan}).
A nonzero $\lambda\in\Hom_{U_{(c^k)}}(\rho_c(\tau),\psi_k)$ is called a $(k,c)$ functional on $\rho_c(\tau)$.
The unique $(k,c)$ model of $\rho_c(\tau)$, $W_{\psi}(\rho_c(\tau))$, is the space of functions $W:\GL_{kc}\to\C$ of the form
\begin{align*}
W(g)=\lambda(\rho_c(\tau)(g)\xi),
\end{align*}
where $\xi$ varies in the space of $\rho_c(\tau)$, and $\lambda$ is a fixed $(k,c)$ functional on $\rho_c(\tau)$.
The space $W_{\psi}(\rho_c(\tau))$ is a representation of $\GL_{kc}$ with the action given by right-translations, and as such it is a quotient of $\rho_c(\tau)$. For example if $c=1$, then $\rho_c(\tau)=\tau$ and $W_{\psi}(\tau)$ is the usual $\psi_k$-Whittaker model of $\tau$.

\subsection{Functorial transfer}\label{local transfer results}
Recall the representation $\faithful:\Dual{G}\to\Dual{\GL}_N$ defined in \S~\ref{the groups}. Suppose that $F$ is non-archimedean and let $\pi\in\Irr(G)$ be unramified. By the Satake isomorphism, $\pi$ is parameterized by
a semi-simple conjugacy class $[t_{\pi}]$ in $\Dual{G}$, and the semi-simple conjugacy class $[\faithful(t_{\pi})]$ in $\Dual{\GL}_N$ determines an unramified $\Pi\in\Irr(\GL_N)$, which we call the unramified transfer of $\pi$ (see \cite[\S~16.2]{Bo} and \cite[\S~5.2]{CKPS}). When $F$ is archimedean, by Langlands \cite{La3}, any $\pi\in\Irr(G)$ is parameterized by an admissible homomorphism $\phi:W_F\to\Dual{G}$, where $W_F$ is the Weil group of $F$, and then $\faithful\circ\phi:W_F\to\Dual{\GL}_N$ parameterizes a representation $\Pi\in\Irr(\GL_N)$, which is the archimedean functorial transfer of $\pi$ (see \cite[\S~5.1]{CKPS}).
For brevity, in both cases we denote
\begin{align}\label{eq:t pi}
\transfer(\pi)=\Pi.
\end{align}

The central character of $\transfer(\pi)$ is $\chi_{\pi}^{N/2}$, which is trivial when $G\ne\GSpin_c$, and makes sense for $\GSpin_c$ because
by the description in \S~\ref{the groups}, $\det\faithfulG{\GSpin_c}=e_0^{N/2}$.

In the non-archimedean case the statement on the central character follows from the concrete description of
$[\faithful(t_{\pi})]$ in \cite[\S~p.~189]{CKPS} and \cite[pp.~177--178]{AsgSha}, this description is also applicable to non-generic representations.
In the archimedean case the compatibility of the central character with the local Langlands correspondence was explained in \cite{La3}
for an arbitrary connected reductive group (see also \cite[pp.~178--179]{AsgSha} for $\GSpin_c$).

\subsection{The integral}\label{The local integral}
Let $c>1$ and $k\geq1$ be integers. Recall $G=\mathcal{G}_c$, and define $H=\mathcal{G}_{2kc}$.
Let $Q=M_Q\ltimes U_Q$ be the standard parabolic subgroup of $H$ with the Levi part $M_Q\cong\GL_c\times\ldots\times\GL_c\times\mathcal{G}_{2c}$ ($\GL_c$ occurs $k-1$ times). For brevity, put $U=U_Q$. For a character $\psi'$ of $U$, denote its stabilizer in $M_Q$ by $\mathrm{St}_{M_Q}(\psi')$.

If $k>1$, let $\psi_U$ be a character of $U$ which is generic with respect to the unipotent orbit $((2k-1)^c1^c)$ associated with $H$
(see \cite{G2} and \cite[\S~5]{CM} for these notions). This means that $\mathrm{St}_{M_Q}(\psi_U)(\C)$ and $G(\C)\times G(\C)$ are of the same Lie type. We can choose $\psi_U$ such that we have a map $\embedding:G\times G\rightarrow \mathrm{St}_{M_Q}(\psi_U)$, which is an embedding unless $G=\GSpin_c$ in which case its kernel is the diagonal embedding $(C_G^{\circforgspin})^{\triangle}$ of $C_G^{\circforgspin}$ in $G\times G$. If $k=1$, then $U$ is trivial and $\psi_U=1$, and we have a map $\embedding:G\times G\rightarrow M_Q$ with the same properties. Denote the identity element of $G$ by $e_G$. For brevity set $\embeddingL(g)=\embedding(g,e_G)$ and $\embeddingR(g)=\embedding(e_G,g)$, for $g\in G$.

Let $P=M_P\ltimes U_P<H$ be a standard Siegel parabolic subgroup. Using the isomorphism
$i_{M_P}:M_P\to\GL_{kc}\times\mathcal{G}_0$, we fix $T_{\GL_{kc}}$ and $B_{\GL_{kc}}$ to be the projections of $i_{M_P}(T_H\cap i_{M_P}^{-1}(\GL_{kc},1))$ and
$i_{M_P}(B_H\cap i_{M_P}^{-1}(\GL_{kc},1))$, resp., into $\GL_{kc}$.
Fix a maximal compact subgroup $K_G$ of $G$ and a similar subgroup $K_H$ of $H$. We can assume that $\embedding(K_G,K_G)<K_H$ and in addition $H=B_HK_H$.

Denote $P'={}^{\Specialjmath^{kc}}P$. Take a representative $\delta_0\in H$ for $\WeylElement\in W_H$ such that ${}^{\delta_0}U_{P'}=U_{P}^-$.
Let $\delta_1\in i_{M_Q}^{-1}(I_c,\ldots,I_c,\mathcal{G}_{2c})\cap U_{P'}$ be of maximal rank as a matrix in $\Mat_c$ (the rank is $2\lfloor c/2\rfloor$). Put $\delta=\delta_0\delta_1$. There is a subgroup $\Umacro<U$ such that $i_{M_P}({}^{\delta}\Umacro)=(U_{(c^k)},1)$ and $\psi_U$ restricts to a generic character of $\Umacro(\cong U_{(c^k)})$. For $k>1$, once $\delta_0$ is fixed, $\delta_1$ is uniquely specified by this character.
Define $U_0=U\cap U_{P'}$. Then $U=\Umacro\rtimes U_0$. The character $\psi_U$ is a character of $U_0$ by restriction. Also let $\iota$ be an involution of $G$ such that ${}^{\delta}\{\embedding(g,{}^{\iota}g):g\in G\}<M_P$ (such an involution exists).

When $k>1$, we can define $\psi_U$ and $\delta_0$ such that $\psi_U({}^{\delta^{-1}}u)=\psi_k^{-1}(u)$ for all $u\in U_{(c^k)}$ (thereby also fixing $\delta_1$); we assume this is the case. For concrete definitions of all objects above: the groups, the character $\psi_U$, the map $\embedding$, $\delta$ and $\iota$, see \cite{CFK2022,DimaKaplan}.
We give one example for the convenience of the reader.
\begin{example}\label{example:Sp}
Let $c=2n$, $G=\Sp_c$ and $k>1$. Then $H=\Sp_{2kc}$ and $\psi_U$ is given by
\begin{align*}
\psi_U(\left(\begin{smallmatrix}v&u&z\\&I_{2c}&u'\\&&v^*\end{smallmatrix}\right))=\psi_{k-1}(v)\psi(\tr(u^{1,1}+u^{2,2})),
\end{align*}
where $v\in U_{(c^{k-1})}$, $u^{1,1},u^{2,2}\in\Mat_n$ and $\left(\begin{smallmatrix}u^{1,1}&*&*\\ * & * &u^{2,2}\end{smallmatrix}\right)$ denotes the bottom $c$ rows of $u$. The map $\embedding:G\times G\rightarrow \mathrm{St}_{M_Q}(\psi_U)$ is the embedding given as follows: for $g_1=\left(\begin{smallmatrix}g_{1,1}&g_{1,2}\\g_{1,3}&g_{1,4}\end{smallmatrix}\right)\in G$, $g_{1,i}\in \Mat_{n}$, and $g_2\in G$,
\begin{align*}
\embedding(g_1,g_2)=
\diag(g_1,\ldots,g_1,\left(\begin{smallmatrix} g_{1,1}&&g_{1,2}\\ &g_2&\\ g_{1,3}&&g_{1,4}\end{smallmatrix}\right),g_1^*,\ldots,g_1^*),
\end{align*}
where $g_1^*$ appears $k-1$ times. Finally
\begin{align*}
\delta_0=\left(\begin{smallmatrix} &I_{kc}\\ -I_{kc}\end{smallmatrix}\right),\quad
\delta_1=\diag(I_{(k-1)c},\left(\begin{smallmatrix}I_c & I_c \\ &I_c\end{smallmatrix}\right),I_{(k-1)c}),\quad
{}^{\iota}g=\left(\begin{smallmatrix}&I_{n}\\I_{n}\end{smallmatrix}\right)g\left(\begin{smallmatrix}&I_{n}\\I_{n}\end{smallmatrix}\right)\quad(g\in G).
\end{align*}
\end{example}
Let $\pi\in\Irr(G)$. Recall that $\chi_{\pi}$ was defined in \S~\ref{local groups and notation}. Also let $\pi^{\iota}$ be the
representation on the space of $\pi$ given by $\pi^{\iota}(g)=\pi({}^{\iota}g)$.

Let $\tau\in\IrrGen(\GL_k)$ and recall the representation $\rho_c(\tau)$ and its $(k,c)$ model $W_{\psi}(\rho_c(\tau))$ defined in \S~\ref{local groups and notation}.
For $s\in\C$, let $V(s,\tau,c)$ be the space of the representation\footnote{For $H=\GSpin_{2kc}$ this is the representation $\Ind_{P}^{H}(\absdet^{s-1/2}W_{\psi}(\rho_c(\tau))\otimes
\chi_{\pi})$ of \cite{CFK2022} under the identification of $M_P$ with $\GL_{kc}\times\GSpin_{0}$ given in \cite[\S~2.5]{CFK2022}.}
\begin{align*}
\Ind_{P}^{H}((\absdet^{s-1/2}\chi_{\pi}W_{\psi}(\rho_c(\tau))\otimes\chi_{\pi})\circ i_{M_P}).
\end{align*}
By the Iwasawa decomposition we can realize these representations,
as $s$ varies, all in the space $V^{K_H}(\tau,c)$ of
\begin{align*}
\Ind_{K_H\cap P}^{K_H}((\chi_{\pi}W_{\psi}(\rho_c(\tau))\otimes\chi_{\pi})\circ i_{M_P}).
\end{align*}
Consider the space $V(\tau,c)$ of holomorphic functions $f:\C\to V^{K_H}(\tau,c)$, where if $F$ is non-archimedean we say that $f$ is holomorphic if
$s\mapsto f(s)(y)(I_{kc})$ is entire for all $y\in K_H$, and if $F$ is archimedean $f$ is holomorphic if its composition with
every continuous functional on $V^{K_H}(\tau,c)$ is entire. This space admits a natural representation of $H$.

We regard $f\in V(\tau,c)$ as a complex-valued function $f(s,h)$ on $\C\times H$. Note that for all $s\in\C$, $h\mapsto f(s,h)\in V(s,\tau,c)$,
and for each $h\in H$, the function $s\mapsto f(s,h)$ is entire. The function $f$ is also called an entire section. When $F$ is archimedean, we say that
$f$ is $K_H$-finite if $h\mapsto f(s,h)$ is $K_H$-finite for all $s$.

The integral is defined for a matrix coefficient $\omega$ of $\pi^{\vee}$ and $f\in V(\tau,c)$ by
\begin{align*}
Z(s,\omega,f)=\int\limits_{C_G^{\circforgspin}\backslash G}\int\limits_{U_0}
\omega(g)f(s,\delta u_0\embeddingR({}^{\iota}g))\,\psi_U(u_0)\,du_0\,dg.\notag
\end{align*}
\begin{theorem}(\cite{CFK2022})\label{theorem:all local props}
The integral $Z(s,\omega,f)$ satisfies the following properties.
\begin{enumerate}[leftmargin=*]
  \item It is absolutely convergent in $\Real(s)\gg0$ independently of the data (\cite[Proposition~2.5]{CFK2022}).
  \item\label{it:cont} It extends to a meromorphic function of $s$ which is continuous as a trilinear map on
  $V(s,\tau,c)\times\pi\times(\pi^{\iota})^{\vee}$ (\cite[\S~4, \S~6.10]{CFK2022}).
 \item\label{it:nonzero} For any given $s$ there is a matrix coefficient $\omega$ and a $K_{H}$-finite $f$ for which the integral is finite and nonzero in a neighborhood of $s$ (\cite[Proposition~2.6, Corollary~6.9]{CFK2022}).
\end{enumerate}
\end{theorem}

When data are normalized and unramified, and in addition $K_G=G(\mathcal{O})$, $K_H=H(\mathcal{O})$ and the measure is normalized so that $du_0(U_0\cap K_H)=1$ and $dg(K_G)=1$, by \cite[\S~6.4]{CFK2022}
\begin{align}\label{int:unr}
Z(s,\omega,f)=\frac{L(s,\pi\times\tau)}{b(s,c,\tau\otimes\chi_{\pi})},
\end{align}
where
\begin{align}\label{eq:b}
b(s,c,\tau\otimes\chi_{\pi})=
\begin{dcases}
[L(s+c/2,\tau)]\prod\limits_{j=1}^{c/2}L(2s+2j-2,\tau,\vee^2\otimes\chi_{\pi})L(2s+2j-1,\tau,\wedge^2\otimes\chi_{\pi})&2\mid c,\\
\prod\limits_{j=1}^{\lfloor c/2\rfloor}L(2s+2j-1,\tau,\vee^2\otimes\chi_{\pi})
\prod\limits_{j=1}^{\lceil c/2\rceil}L(2s+2j-2,\tau,\wedge^2\otimes\chi_{\pi})&2\nmid c.
\end{dcases}
\end{align}
Here $L(s+c/2,\tau)$ appears only when $G=\Sp_c$. See \cite[\S~3]{CFK2022}.

We will also need the following property of the inner $du_0$-integral.
\begin{lemma}\cite[Corollary~2.3]{CFK2022}\label{corollary:inv of U on g giota}
In $\Real(s)\gg0$, for any $f\in V(\tau,c)$ and $g_0\in G$,
\begin{align*}
\int\limits_{U_0}f(s,\delta u_0\embedding(g_0,{}^{\iota}g_0))\,\psi_U(u_0)\,du_0
=\int\limits_{U_0}f(s,\delta u_0)\,\psi_U(u_0)\,du_0.
\end{align*}
\end{lemma}

\subsection{\texorpdfstring{$\gamma$-}{gamma}, $L$- and \texorpdfstring{$\epsilon$}{epsilon}-factors}\label{the local factors}
Let $\pi\in\Irr(G)$ and $\tau\in\IrrGen(\GL_k)$.
By \cite[Proposition~2.2]{CFK2022} and Theorem~\ref{theorem:all local props} \eqref{it:cont}, the integral $Z(s,\omega,f)$ can be regarded as a morphism in the space
\begin{align}\label{eq:uniqueness space}
\Hom_{U\rtimes \embedding(G,G)}(V(s,\tau,c),\psi_U^{-1}\otimes\pi^{\vee}\otimes\pi^{\iota}).
\end{align}
By Gourevitch and the third named author \cite[Theorem~2.1]{DimaKaplan} the dimension of this space is at most $1$, except
for finitely many values of $q^{-s}$ over a non-archimedean field, and except for a discrete set of values of $s$ over an archimedean field. As we explain below, this leads to the definition of the local factors via a functional equation.

Let $w_P\in H$ be a representative for $\WeylElement_H\WeylElement_{M_P}$ (see \S~\ref{the groups} for notation). We then have the standard intertwining operator
\begin{align*}
M(s,c,\tau\otimes\chi_{\pi},w_P):V(s,\tau,c)\rightarrow V(1-s,\chi_{\pi}^{-1}\tau^{\vee},c).
\end{align*}
See \cite[\S~3]{CFK2022} for details; note that the $(k,c)$ model is defined with respect to $\psi$ on both sides, and on the r.h.s.~ when $kc$ is odd we induce from ${}^{\Specialjmath}P$ and
the representation
$(\absdet^{1/2-s}W_{\psi}(\rho_c(\tau^{\vee}))\otimes\chi_{\pi})\circ i_{{}^{\Specialjmath}M_P}$ of ${}^{\Specialjmath}M_P$.

According to the uniqueness theorem \cite[Theorem~3.2]{CFK2022}, the representation $V(s,\tau,c)$ affords a certain unique degenerate Whittaker model,
which we can use in order to write a functional equation similar to Shahidi's functional equation \cite[(1.2)]{Sh3}, relating between $f$ and $M(s,c,\tau\otimes\chi_{\pi},w_P)f$.
Specifically, denote the corresponding degenerate Whittaker functional by $\lambda(s,c,\tau\otimes\chi_{\pi},\psi)$. This functional depends on the choices of $\delta_0$ and
$\delta_1$; see \cite[(3.3)]{CFK2022}. Then there is a meromorphic function $C(s,c,\tau\otimes\chi_{\pi},\psi)$ such that for any $f\in V(\tau,c)$,
\begin{align}\label{eq:func equation for normalization}
&\lambda(s,c,\tau\otimes\chi_{\pi},\psi)f=
C(s,c,\tau\otimes\chi_{\pi},\psi)\lambda(1-s,c,\chi_{\pi}^{-1}\tau^{\vee}\otimes\chi_{\pi},\psi)M(s,c,\tau\otimes\chi_{\pi},w_P)f.
\end{align}
A minor modification is needed in \eqref{eq:func equation for normalization} when $kc$ is odd. See \cite[\S~3]{CFK2022} for precise definitions of the functional (in all cases) and of \eqref{eq:func equation for normalization} in the odd case. Define the normalized intertwining operator
\begin{align*}
M^*(s,c,\tau\otimes\chi_{\pi},\psi)=C(s,c,\tau\otimes\chi_{\pi},\psi)M(s,c,\tau\otimes\chi_{\pi},w_P).
\end{align*}
The integral $Z(1-s,\omega,M^*(s,c,\tau\otimes\chi_{\pi},\psi)f)$, which is absolutely convergent in a left half plane,
satisfies the same equivariance properties as $Z(s,\omega,f)$ and thus also belongs to \eqref{eq:uniqueness space}. The following equation then defines a meromorphic function $\gamma(s,\pi\times\tau,\psi)$: for any matrix coefficient $\omega$ of $\pi^{\vee}$ and $f\in V(\tau,c)$,
\begin{align}\label{eq:Gamma def}
&\gamma(s,\pi\times\tau,\psi)Z(s,\omega,f)\\&=\pi(\mathfrak{i}_G)^k[\gamma(s,\tau,\psi)]\vartheta(s,c,\tau\otimes\chi_{\pi},\psi)Z(1-s,\omega,M^*(s,c,\tau\otimes\chi_{\pi},\psi)f).\nonumber
\end{align}
Here $\mathfrak{i}_G=(-1)^{c+1}e_G$ unless $G=\GSpin_{c}$ with $2|c$, then
$\mathfrak{i}_G=i_{M}^{-1}(-I_{c/2},-1)\in C_G$ for a standard Siegel Levi subgroup $M$ of $G$;
the factor $\gamma(s,\tau,\psi)$ appears only when $G=\Sp_c$; and
\begin{align}\label{eq:vartheta}
\vartheta(s,c,\tau\otimes\chi_{\pi},\psi)=\chi_{\pi}(2)^{-kn}\tau(-I_k)^n\tau(2I_k)^{-2n}|2|^{-2kn(s-1/2)},\qquad n=\lfloor c/2\rfloor.
\end{align}
In addition, if $\pi'\in\Irr(\Sp_0)$ we formally set $\gamma(s,\pi'\times\tau,\psi)=\gamma(s,\tau,\psi)$, and for
$\pi'\in\Irr(\GSpin_c)$ with $c\leq1$, $\gamma(s,\pi'\times\tau,\psi)=1$.
\begin{remark}\label{remark:apology}
The function $\vartheta(s,c,\tau\otimes\chi_{\pi},\psi)$ is non-canonical and depends on the choices
made in the construction: the definitions of $G$ and $H$; the character $\psi_U$; the representative $\delta_0$; and the choice of $\delta_1$ when $k=1$. Different choices will multiply $\vartheta(s,c,\tau\otimes\chi_{\pi},\psi)$ by a factor which is entire and nowhere vanishing as a function of $s$. The formula \eqref{eq:vartheta} was computed in \cite{CFK2022} based on the concrete construction described there. Cf. \cite[(25)]{LR}.
\end{remark}

We recall that the Rankin--Selberg $\gamma$-, $L$- and $\epsilon$-factors for pairs of irreducible admissible representations of general linear groups are
defined by \cite{JPSS,JS3} even when the representations are not generic (see \cite[\S~9.4]{JPSS}). Throughout, when we refer to
$\gamma$-, $L$- or $\epsilon$-factors for $\Pi\times\sigma$ where $\Pi\in\Irr(\GL_l)$ and $\sigma\in\Irr(\GL_r)$, we always refer to these factors.

The following summarizes \cite[Theorem~4.2]{CFK2022}.
\begin{theorem}\label{theorem:ten commendments}
Let $\pi\in\Irr(G)$ and $\tau\in\IrrGen(\GL_k)$.
The $\gamma$-factor $\gamma(s,\pi\times\tau,\psi)$ satisfies and is characterized by the following properties.
\begin{itemize}[leftmargin=*]
\item Unramified twisting: for any $s_0,s_1\in\C$,
\begin{align}
\label{eq:unramified twisting}
\begin{cases}\gamma(s,\pi\times|\det|^{s_1}\tau,\psi)=\gamma(s+s_1, \pi\times\tau,\psi)& G\ne\GSpin_c,\\
\gamma(s,|\Upsilon_c|^{s_0}\pi\times|\det|^{s_1}\tau,\psi)=\gamma(s+s_0+s_1,\pi\times\tau,\psi)& G=\GSpin_c.
\end{cases}
\end{align}
\item
Multiplicativity: Let $\pi$ be a quotient of $(\sigma_1\times\ldots\times\sigma_d)\rtimes\pi'$ where each $\sigma_i$ and $\pi'$ are irreducible admissible.
Let $\tau$ be a quotient of $\times_{i=1}^{d'}\tau_i$, where each $\tau_i$ is irreducible admissible generic. Then
    \begin{align}\label{eq:multiplicativity I}
    &\gamma(s,\pi\times\tau,\psi)=\gamma(s,\pi'\times\tau,\psi)\prod_{i=1}^{d}
    \gamma(s,\sigma_i\times\tau,\psi)\gamma(s,\sigma_i^{\vee}\times\chi_{\pi}\tau,\psi),\\
    &\gamma(s,\pi\times\tau,\psi)=\prod_{i=1}^{d'}\gamma(s,\pi\times\tau_i,\psi).\label{eq:multiplicativity II}
    \end{align}
\item
Unramified factors: When all data are unramified,
    \begin{align*}
    \gamma(s,\pi\times\tau,\psi)=\frac{L(1-s,\pi^{\vee}\times\tau^{\vee})}{L(s,\pi\times\tau)}.
    \end{align*}

\item
Duality:
\begin{align*}
\gamma(s,\pi^{\vee}\times\tau,\psi)=\gamma(s,\pi\times\chi_{\pi}^{-1}\tau,\psi).
    \end{align*}

\item
Functional equation:
    \begin{align}\label{eq:functional equation}
    \gamma(s,\pi\times\tau,\psi)\gamma(1-s,\pi^{\vee}\times\tau^{\vee},\psi^{-1})=1.
    \end{align}

\item Dependence on $\psi$: Denote $\psi_b(x)=\psi(bx)$, for $b\in F^*$. Then
    \begin{align}\label{eq:dependence on psi}
    \gamma(s,\pi\times\tau,\psi_b)=\chi_{\pi}^{k\lfloor c/2\rfloor}(b)\tau(bI_k)^{N}|b|^{kN(s-1/2)}\gamma(s,\pi\times\tau,\psi).
    \end{align}

\item
Archimedean property: Assume $F$ is archimedean and let $\transfer(\pi)$ be defined by \eqref{eq:t pi}. Then
\begin{align}\label{eq:Archimedean property}
\gamma(s,\pi\times\tau,\psi)=\epsilon(s,\transfer(\pi)\times\tau,\psi)\frac{L(1-s,\transfer(\pi)^{\vee}\times\tau^{\vee})}{L(s,\transfer(\pi)\times\tau)}.
\end{align}

\item
Crude functional equation:  Let $F_0$ be a number field with ring of adeles $\A$, $\psi=\otimes'_{\nu}\psi_{\nu}$ be a nontrivial additive character of $\A$ which is trivial on $F_0$, and assume $\pi=\otimes'_{\nu}\pi_{\nu}$ and $\tau=\otimes'_{\nu}\tau_{\nu}$ are irreducible cuspidal automorphic representations of $G(\A)$ and $\GL_{k}(\A)$, resp. Let $S$ be a finite set of places of $F_0$ such that for any $\nu\notin S$, all data are unramified. Then
    \begin{align}\label{eq:crude}
    L^S(s,\pi\times\tau)=\prod_{\nu\in S}\gamma(s,\pi_{\nu}\times\tau_{\nu},\psi_{\nu})L^S(1-s,\pi^{\vee}\times\tau^{\vee}).
    \end{align}
\end{itemize}
\end{theorem}

We recall the definitions of the $L$- and $\epsilon$-factors from \cite[\S~7]{CFK2022}.

\underline{Tempered case}: When $\pi$ and $\tau$ are tempered, in the non-archimedean case take $P(X)\in\C[X]$ such that
the zeros of $P(q^{-s})$ are those of $\gamma(s,\pi\times\tau,\psi)$ and $P(0)=1$, and define
$L(s,\pi\times\tau)=P(q^{-s})^{-1}$ (which is independent of $\psi$ by \eqref{eq:dependence on psi}). The $\epsilon$-factor is then defined by
\begin{align}\label{eq:eps}
\epsilon(s,\pi\times\tau,\psi)=\gamma(s,\pi\times\tau,\psi)\frac{L(s,\pi\times\tau)}{L(1-s,\pi^{\vee}\times\tau^{\vee})},
\end{align}
and \eqref{eq:functional equation} implies that $\epsilon(s,\pi\times\tau,\psi)\in\C[q^{-s},q^s]^*$.
In the archimedean case define
\begin{align*}
L(s,\pi\times\tau)=L(s,\transfer(\pi)\times\tau),\qquad
\epsilon(s,\pi\times\tau,\psi)=\epsilon(s,\transfer(\pi)\times\tau,\psi).
\end{align*}

\underline{Essentially tempered}: When $\pi=|\Upsilon_c|^{r_0}\pi_0$ and $\tau=|\det|^{r_1}\tau_0$ for $r_0,r_1\in\R$ and tempered representations $\pi_0$ and $\tau_0$, with
$r_0=0$ for $G\ne\GSpin_c$, define
\begin{align*}
L(s,\pi\times\tau)=L(s+r_0+r_1,\pi_0\times\tau_0),\qquad \epsilon(s,\pi\times\tau,\psi)=\epsilon(s+r_0+r_1,\pi_0\times\tau_0,\psi).
\end{align*}
The relation \eqref{eq:eps} still holds by \eqref{eq:unramified twisting}.

\underline{Langlands quotient}: In general write $\pi$ as the Langlands quotient of
\begin{align}\label{LQpi}
\sigma\rtimes\pi',
\end{align}
where $\sigma\in\Irr(\GL_l)$, $l>0$, $\sigma$ is the unique irreducible quotient of $\times_{i=1}^{d}\sigma_i$, with $\sigma_i=\absdet^{a_i}\sigma_{i,0}$ for a tempered representation $\sigma_{i,0}$, $\pi'\in \Irr(\mathcal{G}_{c-2l})$ is essentially tempered, and
$a_1>\ldots>a_{d}>r/2$ where $r$ is such that $|\Upsilon_{c-2l}|^{-r}\pi'$ is tempered ($r=0$ if $G\ne\GSpin_c$). Also write $\tau=\times_{i=1}^{d'}\tau_i$ where each $\tau_i$ is essentially tempered. We then have
the Rankin--Selberg $L$-factors $L(s,\sigma\times\tau)=\prod_{i,j}L(s,\sigma_i\times\tau_j)$ and $L(s,\sigma^{\vee}\times\chi_{\pi}\tau)$. Define
\begin{align}\label{eq:gen L def}
&L(s,\pi\times\tau)=L(s,\sigma\times\tau)L(s,\sigma^{\vee}\times\chi_{\pi}\tau)\prod_jL(s,\pi'\times\tau_j),\\
&\epsilon(s,\pi\times\tau,\psi)=\epsilon(s,\sigma\times\tau,\psi)\epsilon(s,\sigma^{\vee}\times\chi_{\pi}\tau,\psi)
\prod_j\epsilon(s,\pi'\times\tau_j,\psi).\nonumber
\end{align}
Note that if $\pi'\in\Irr(\Sp_0)$, by the definition of $\gamma(s,\pi'\times\tau_j)$ we have $L(s,\pi'\times\tau_j)=L(s,\tau_j)$, while if $\pi'\in\Irr(\GSpin_c)$ with $c\leq1$,
by definition $\gamma(s,\pi'\times\tau_j)=1$ hence $L(s,\pi'\times\tau_j)=1$.
We then have \eqref{eq:eps} in general.
\begin{remark}
The formulations of \eqref{eq:unramified twisting} and \eqref{eq:multiplicativity I}
for $\GSpin_c$ are slightly different from \cite{CFK2022} because our conventions for $\Upsilon_l$ and $i_M$ here are chosen to be compatible with \cite{AsgSha}.
In turn, for the $L$-factor we have $L(s,\sigma\times\tau)L(s,\sigma^{\vee}\times\chi_{\pi}\tau)$ in \eqref{eq:gen L def} instead of
$L(s,\sigma\times\chi_{\pi}\tau)L(s,\sigma^{\vee}\times\tau)$ in \textit{ibid.}, with a similar change in the $\epsilon$-factor.
\end{remark}

The following result of \cite{CFK2022} shows that our definitions are reasonable.
\begin{theorem}\label{theorem:pi and Pi for unramified pi or archimedean}
Let $\pi\in\Irr(G)$ and $\tau\in\IrrGen(\GL_k)$. If $F$ is non-archimedean we further suppose $\pi$ is unramified.
Then, the $\gamma$-, $L$- and $\epsilon$-factors of $\pi\times\tau$ and $\transfer(\pi)\times\tau$ coincide.
\end{theorem}
\begin{proof}
In the non-archimedean case this was proved in \cite[Lemma~7.1]{CFK2022}. In the archimedean case
this follows from the definitions and from \eqref{eq:Archimedean property}.
\end{proof}

We also need the following results.
\begin{theorem}(\cite[Corollary~7.2]{CFK2022})\label{theorem:L for unr temp and unitary is holomorphic in half plane}
Let $\pi\in\Irr(G)$ and $\tau\in\IrrGen(\GL_k)$. Suppose $\pi$ is tempered, and if $F$ is non-archimedean also assume $\pi$ is unramified. Then,
\begin{enumerate}[leftmargin=*]
  \item If $\tau$ is tempered, $L(s,\pi\times\tau)$ is holomorphic in $\Real(s)>0$.
  \item If $\tau$ is unitary, $L(s,\pi\times\tau)$ is holomorphic in $\Real(s)\geq1/2$.
\end{enumerate}
\end{theorem}

\begin{corollary}\label{corollary:L of pi vs L of sigma in half plane}
Let $\pi\in\Irr(G)$ and $\tau\in\IrrGen(\GL_k)$. Assume that $\tau$ and $\chi_{\pi}$ are unitary, and if $F$ is non-archimedean also suppose $\pi$ is unramified.
Write $\pi$ as the Langlands quotient of $\sigma\rtimes\pi'$ with the assumptions as in \eqref{LQpi} and in particular, $l>0$. Then,
$L(s,\pi\times\tau)/L(s,\sigma^{\vee}\times\chi_{\pi}\tau)$ is holomorphic and nonzero in $\Real(s)\geq1/2$.
\end{corollary}
\begin{proof}
Since $\chi_{\pi}$ is unitary, the essentially tempered representation $\pi'$ of $\mathcal{G}_{c-2l}$ is unitary hence tempered.
Note that when $F$ is non-archimedean, $\pi$ is unramified whence so is $\pi'$.

Since $a_i>0$, $L(s,\sigma\times\tau)$ is holomorphic in $\Real(s)\geq1/2$.
In addition $L(s,\pi'\times\tau)$ is holomorphic in $\Real(s)\geq1/2$ by Theorem~\ref{theorem:L for unr temp and unitary is holomorphic in half plane}; note also that
if $l=n$ and $G=\Sp_c$, $L(s,\pi'\times\tau)=L(s,\tau)$ by the definitions. Looking at \eqref{eq:gen L def} we deduce that
the poles of $L(s,\pi\times\tau)$ in $\Real(s)\geq1/2$ coincide with the poles of $L(s,\sigma^{\vee}\times\chi_{\pi}\tau)$.
\end{proof}

\begin{theorem}(\cite[Lemma~7.3]{CFK2022}, Stability)\label{theorem:pi and Pi for ramified twisted}
Assume $F$ is non-archimedean. Let $\pi\in\Irr(G)$ and $\Pi\in\Irr(\GL_N)$. Suppose the central character of $\Pi$ is $\chi_{\pi}^{N/2}$.
If $\eta$ is a sufficiently highly ramified character of $F^*$, depending on $\pi$ and $\Pi$, then for any unramified $\tau\in\IrrGen(\GL_k)$, the $\gamma$-, $L$- and $\epsilon$-factors of $\pi\times\eta\tau$ and $\Pi\times\eta\tau$ coincide and moreover,
$L(s,\pi\times\eta\tau)=L(s,\Pi\times\eta\tau)=1$.
\end{theorem}
\begin{remark}\label{remark:stability}
\begin{enumerate}[leftmargin=*]
\item In \cite[Lemma~7.3]{CFK2022} it was assumed for $G=\GSpin_c$ that $\Pi$ is unramified, but this assumption was never used in the proof.
\item The representation $\Pi$ in \cite[Lemma~7.3]{CFK2022} was taken to be generic. However, the generic case already implies the result in general:
Indeed $\Pi\in\Irr(\GL_N)$ is a constituent of $\times_{i=1}^d\sigma_i$ where each $\sigma_i$ is essentially tempered. Then if $\Pi'$ is the generic constituent of $\times_{i=1}^d\sigma_i$, under the assumptions of the theorem we have $\gamma(s,\pi\times\eta\tau,\psi)=\gamma(s,\Pi'\times\eta\tau,\psi)$, and the definition in \cite[\S~9.4]{JPSS} implies $\gamma(s,\Pi'\times\eta\tau,\psi)=\gamma(s,\Pi\times\eta\tau,\psi)$. Moreover if $\eta$ is sufficiently highly ramified such that $L(s,\sigma_i\times\eta\tau)=1$ for all $i$, then
    $L(s,\Pi\times\eta\tau)=1$.
\end{enumerate}
\end{remark}

\subsection{Coarse transfer}\label{coarse transfer}
Motivated by Theorem~\ref{theorem:pi and Pi for unramified pi or archimedean} we introduce the following definition.
Let $\pi\in\Irr(G)$. We call $\Pi\in\Irr(\GL_N)$ a ``coarse transfer" of $\pi$ if for every $k\geq1$ and every $\tau\in\Irr(\GL_k)$,
\begin{align}\label{local-FE-gamma}
\gamma(s,\pi\times\tau,\psi)=\gamma(s,\Pi\times \tau,\psi).
\end{align}
For example, if $\pi$ is unramified or $F$ is archimedean, by Theorem~\ref{theorem:pi and Pi for unramified pi or archimedean}, $\transfer(\pi)$ is a coarse transfer of $\pi$.
According to \eqref{eq:multiplicativity II} and the similar multiplicativity of the Rankin--Selberg $\gamma$-factors
(see \cite{JPSS,JS3}), it is sufficient to verify \eqref{local-FE-gamma} for the supercuspidal representations $\tau$.

Assume $F$ is non-archimedean. By \cite[Theorem~3.1]{JPSS}, if $\Pi$ is a coarse transfer of $\pi$, then so is any $\Pi'\in\Irr(\GL_N)$ which has the same supercuspidal support as $\Pi$. Conversely, by Henniart \cite[Proposition~1.9]{Henniart2002}, the supercuspidal support of a coarse transfer (if it exists)
depends only on $\pi$. By contrast, an irreducible \textit{generic} representation of a split classical group is already determined by its twisted
$\gamma$-factors, see for example \cite{JSd1,Morimoto2018,Zhang2018,Zhang2019}.

\subsection{The poles of the $L$-factor}\label{Producing poles}
We study the poles in $\Real(s)\geq1/2$ of the $L$-factor, under certain assumptions (which will be sufficient for the local aspects of the proof of Theorem~\ref{theorem:twisting to obtain entire L function} below). If $F$ is non-archimedean let $\IrrUnrForPoles(G)\subset\Irr(G)$ be the subset of unramified representations, and if $F$ is archimedean take $\IrrUnrForPoles(G)=\Irr(G)$. Let $\IrrUnrUniForPoles(G)\subset\IrrUnrForPoles(G)$ be the subset of representations $\pi$ for which $\chi_{\pi}$ is unitary. Also denote by $\IrrGenUni(\GL_k)\subset\IrrGen(\GL_k)$ the subset of unitary (generic) representations.

\begin{theorem}\label{theorem:archimedean producing poles}
Let $\pi\in\IrrUnrUniForPoles(G)$ and $\tau\in\IrrGenUni(\GL_k)$.
Then for each $s$ with $\Real(s)\geq1/2$, there is a matrix coefficient $\omega$ of $\pi^{\vee}$
and a $K_H$-finite $f\in V(\tau,c)$ such that
\begin{align*}
Z(s,\omega,f)/L(s,\pi\times\tau)
\end{align*}
is nonzero at $s$ (but may have a pole).
\end{theorem}

\underline{Summary of the proof}: Write $\pi$ as the Langlands quotient of $\sigma\rtimes\pi'$ with the notation of \eqref{LQpi}. In light of Corollary~\ref{corollary:L of pi vs L of sigma in half plane}, the main difficulty is to show that the poles of
$L(s,\sigma^{\vee}\times\chi_{\pi}\tau)$ are accounted for by $Z(s,\omega,f)$. The result is obtained by a series of reductions
from $Z(s,f,\omega)$ to a $\GL_l\times\GL_k$ integral, which admits the poles.
Our first step is to substitute a standard integral formula for $\omega$ into $Z(s,f,\omega)$, see
\eqref{eq:matrix coefficient omega formula}. Then we arrive at an iterated integral \eqref{eq:integral 1}.
The inner integral is denoted $Z^1$, see \eqref{eq:integral 1.7}. In Proposition~\ref{proposition:integral over the open cell} we present
$Z^1$ as an integral over the open cell $PU_P^-$. Next, in Proposition~\ref{proposition:Pole Inner Z1}, we compute $Z^1$ for a section supported on $PU_P^-$, and obtain an
inner integral $\ZInnerInner$, see \eqref{eq:mult inner start 2}. In Proposition~\ref{proposition:inner 2} we reduce $\ZInnerInner$ to the $\GL_l\times\GL_k$ integral sought after.

For $k=1$ Theorem~\ref{theorem:archimedean producing poles} already follows from the proof of \cite[Theorem~7.1]{Yamana}, with minor adjustments for the groups
(in \textit{ibid.} the full orthogonal groups $\Orth_c$ were considered, and $\GSpin_c$ was not considered). Thus we assume $k>1$.

For any algebraic group $X$ defined and split over $F$, denote by $\mathcal{S}(X)$ the space of Schwartz functions on $X$ and by
$\SchwartzC(X)\subset \mathcal{S}(X)$ the subspace of compactly supported ones. The following properties of functions in $W_{\psi}(\rho_c(\tau))$ will be used to establish convergence.
\begin{lemma}\label{lemma:main convergence}
Let $\tau\in\IrrGen(\GL_k)$ ($k>1$). There is some constant $d_0>0$ such that for any
$W\in W_{\psi}(\rho_c(\tau))$, there is $\phi\in\mathcal{S}(\Mat_c)$ such that for all $g\in\GL_c$,
\begin{align*}
|W(\diag(g,I_{(k-1)c}))|\leq |\det g|^{-d_0}\phi(g).
\end{align*}
\end{lemma}
\begin{proof}
First observe that the support of $g\mapsto W(\diag(g,I_{(k-1)c}))$ is contained in the support of a Schwartz function. This follows as in
\cite[Lemma~(4.1.5)]{JPSS2} and \cite[Proposition~6.1]{Jac5}, using the $\psi_k$-equivariance of $W$ under the subgroup of elements of $U_{(c^k)}$ of the form $\diag(\left(\begin{smallmatrix}I_c&u\\&I_c\end{smallmatrix}\right),I_{(k-2)c})$.
Then in the non-archimedean case we argue as in Jacquet and Rallis \cite[\S~6]{JR2} by writing an asymptotic expansion of $W$ on the subgroup $\diag(T_{\GL_c},I_{(k-1)c})$.
In the archimedean case we argue as in Aizenbud, Gourevitch and Jacquet \cite[Lemmas 3.2--3.4]{AGJ}, and note that
\cite[Lemma~3.4]{AGJ} is immediately seen to be applicable.
\end{proof}
Denote the space of $\rho_c(\tau)$ by $V_{\rho_c(\tau)}$. We generalize \cite[Lemma~3.3]{AGJ}.
\begin{lemma}\label{lemma:main N invariance}
Assume $F$ is archimedean and let $\tau\in\IrrGen(\GL_k)$. Fix a $(k,c)$ functional $\lambda$ on $\rho_c(\tau)$ (see \S~\ref{local groups and notation}). There is a constant $d_0>0$ and a continuous seminorm $\gamma$ on $V_{\rho_c(\tau)}$ such that for any $\xi\in V_{\rho_c(\tau)}$, $g=\diag(g_1,\ldots,g_k)\in M_{(c^k)}$, $u\in U_{(c^k)}$ and $o\in K_{\GL_{kc}}$,
\begin{align}\label{eq:conv main N 1}
|\lambda(\rho_c(\tau)(ugo)\xi)|\leq\gamma(\xi)|\tau((\det g_k)I_k)|\cdot\prod_{i=1}^{k-1}|\det (g_k^{-1}g_i)|^{-d_0}.
\end{align}
Moreover, for any $\xi$ there are $\phi_1,\ldots,\phi_{k-1}\in\mathcal{S}(\Mat_c)$ such that for all $g,u$ and $o$ as above,
\begin{align}\label{eq:conv main N 2}
|\lambda(\rho_c(\tau)(ugo)\xi)|\leq|\tau((\det g_k)I_k)|\cdot\prod_{i=1}^{k-1}|\det (g_k^{-1}g_i)|^{-d_0}\phi_i(g_{i+1}^{-1}g_i).
\end{align}
\end{lemma}
\begin{proof}
As above we follow the proofs of \cite[Lemmas 3.2--3.4]{AGJ}, but a minor modification to \cite[Lemma~3.4]{AGJ} is required.
For $h=\diag(h_1,\ldots,h_{k-1})\in M_{(c^{k-1})}$, let
\begin{align*}
h^{\times}=\diag(h_{k-1}\dots h_1,h_{k-1}\dots h_2,\ldots,h_{k-1}h_{k-2},h_{k-1},I_c)\in M_{(c^k)}.
\end{align*}
Let $\Mat_c^{k-1}=\Mat_{c}\times\ldots \times\Mat_{c}$ ($k-1$ copies), regarded as an $\R$-space.
We show that there is a constant $D>0$ such that for any polynomial $P$ on $\Mat_c^{k-1}$, there is a continuous seminorm $\gamma_P$ on $V_{\rho_c(\tau)}$ such that
for all $\xi\in V_{\rho_c(\tau)}$ and all $h^{\times}$ as above,
\begin{align}\label{eq:3.4}
|\lambda(\rho_c(\tau)(h^{\times})\xi)|\leq \gamma_P(\xi)|\det{h^{\times}}|^{-2D}/|P(h)|.
\end{align}
(Cf. \cite[Lemma~3.4]{AGJ}.)
First observe that for any $u\in U_{(c^k)}$,
\begin{align*}
\lambda(\rho_c(\tau)(h^{\times}u)\xi)=\psi(\tr(\sum_{i=1}^{k-1}h_iu_{i,i+1}))\lambda(\rho_c(\tau)(h^{\times})\xi).
\end{align*}
It then follows that for any polynomial $Q$ on $\Mat_c^{k-1}$, there is an element $X_Q$ in the complexification of the universal enveloping algebra of the Lie algebra of $\GL_{kc}$, such that
\begin{align}\label{eq:3.4 2}
\lambda(\rho_c(\tau)(h^{\times})d\rho_c(\tau)(X_Q)\xi)=Q(h)\lambda(\rho_c(\tau)(h^{\times})\xi),\qquad\forall \xi,\, h^{\times}.
\end{align}
Let $\gamma_0$ be a continuous seminorm on $V_{\rho_c(\tau)}$ and $D>0$ be an integer such that for all $\xi$ and $b\in\GL_{kc}$,
$|\lambda(\rho_c(\tau)(b)\xi)|\leq\gamma_0(\xi)||b||^{D}$, and let also $P_0$ be a polynomial on $\Mat_c^{k-1}$ such that
$||h^{\times}||^{D}\leq P_0(h)|\det{h^{\times}}|^{-2D}$. Here $||\cdot||$ is the norm on $\GL_{kc}$ of \cite{AGJ}. Then by \eqref{eq:3.4 2},
\begin{align*}
|Q(h)\lambda(\rho_c(\tau)(h^{\times})\xi)|\leq \gamma_0(d\rho_c(\tau)(X_Q)\xi)P_0(h)|\det{h^{\times}}|^{-2D},\qquad\forall \xi,\, h^{\times}.
\end{align*}
We infer \eqref{eq:3.4}. Then \eqref{eq:conv main N 1} for $g=\diag(g_1,\ldots,g_{k-1},I_c)$ follows from this, and the extension to $g=\diag(g_1,\ldots,g_k)$ follows from \cite[Lemma~12]{CFKmodels}. Regarding \eqref{eq:conv main N 2}, as in \cite{AGJ} this follows essentially from the proof of Friedberg and Jacquet \cite[Lemma~3.1]{FJ}. Briefly,
by the Dixmier--Malliavin Theorem \cite{DM} it is sufficient to consider $\xi$ of the form $\phi(\xi)=\int_{\GL_{kc}}\rho_c(\tau)(m)\xi\phi(m)\,dm$ where $\phi\in\SchwartzC(\GL_{kc})$. We proceed as in \cite{FJ} and use \eqref{eq:conv main N 1}.
\end{proof}

We introduce the following notation. For any $h\in P'$ (resp., $h\in {P'}^-$), denote $h=\Levi{h}\Uni{h}$ (resp.,
$h=\Levi{h}\Unim{h}$) where $\Levi{h}\in M_{P'}$ and $\Uni{h}\in U_{P'}$ (resp., $\Unim{h}\in U_{P'}^-$).
Also denote, for any $g\in G$, $\Es{g}=\embeddingR({}^{\iota}g)\in H$. Put $n=\lfloor c/2\rfloor$.

The center of $U_R$, $C_{U_R}$, can be identified with a subgroup of matrices in $\Mat_l$ satisfying a certain symmetry condition, depending on $G$ (see e.g., $z_3$ in \eqref{eq:z for Sp example} below). There is a homeomorphism, or a diffeomorphism over archimedean fields, of $U_R$ and $\Mat_{l\times n-l}\times\Mat_{l\times c-n-l}\times C_{U_R}$.
Denote
\begin{align*}
&\SUParts=\mathcal{S}(\Mat_{l\times n-l})\otimes\mathcal{S}(\Mat_{l\times c-n-l})\otimes \mathcal{S}(C_{U_R})
\end{align*}
and define $\SUPartsc$ similarly with $\SchwartzC(\cdots)$. Then $\SUParts\subset\mathcal{S}(U_R)$ and $\SUPartsc\subset\SchwartzC(U_R)$.

For $z\in U_R$, we have $\embeddingR(z)\in P'$, $\Es{z}\in{P'}^-$ and ${}^{\Es{z}^{-1}}\delta_1\in P'$. Thus ${}^{\Es{z}^{-1}}\delta_1=\Levi{{}^{\Es{z}^{-1}}\delta_1}\Uni{{}^{\Es{z}^{-1}}\delta_1}$. Define
\begin{align}\label{eq:LeviZ}
\LeviZ=i_{M_P}(\,{}^{\delta_0}(\Levi{\Es{z}}\Levi{{}^{\Es{z}^{-1}}\delta_1})\,)\in\GL_{kc}.
\end{align}
(On the r.h.s.~ the $\mathcal{G}_0$ factor is trivial.) By computation $\LeviZ\in \diag(U_{(l,c-2l,l)}\cap U_{(l,c-n-l,n)},I_{(k-1)c})$.
\begin{example}For $G=\Sp_c$,
\begin{align}\label{eq:z for Sp example}
z=&\left(\begin{smallmatrix}
  I_l & z_{1} & & \\
   & I_{n-l} &  &  \\
   &  & I_{n-l} & -J_{n-l}{}^tz_{1}J_l \\
   &  &  & I_l
\end{smallmatrix}\right)\left(\begin{smallmatrix}
  I_l &  & z_{2} & z_{3} \\
   & I_{n-l} &  & J_{n-l}{}^tz_{2}J_l \\
   &  & I_{n-l} &  \\
   &  &  & I_l
\end{smallmatrix}\right)\in U_R,\qquad J_lz_3={}^tz_3J_l,
\\\LeviZ=&
\diag(\left(\begin{smallmatrix}
        I_l & z_{1} & -z_{2} & \quad-z_3-z_{2}J_{n-l}{}^tz_{1}J_l\\
         & I_{n-l} &  & \quad \\
         &  & I_{n-l} & \quad \\
         &  & \quad & I_l
\end{smallmatrix}\right)
\left(\begin{smallmatrix}
        I_l &  &  & 0\\
         & I_{n-l} &  & -J_{n-l}{}^tz_{2}J_l  \\
         &  & I_{n-l} & 0 \\
         &  &  & I_l
\end{smallmatrix}\right),I_{(k-1)c}).\notag
\end{align}
\end{example}
For $a\in\GL_l$, set
\begin{align*}
&\LeviA=\diag(a,I_{kc-l})\in\GL_{kc},\qquad
\LeviA'=\diag(a,I_{(k-1)l})\in\GL_{kl}.
\end{align*}
\begin{lemma}\label{lemma:reduce c to l}
Assume $\tau\in\IrrGenUni(\GL_k)$. Define for $W\in W_{\psi}(\rho_c(\tau))$ and $\mu\in \SUParts$,
\begin{align}\label{Eq:integral W and mu}
&\mathcal{T}(W,\mu)=\int\limits_{U_R}W(\LeviZ)\mu(z)\,dz.
\end{align}
Then, the integral \eqref{Eq:integral W and mu} is absolutely convergent and defines a functional $\mathcal{T}$ on $W_{\psi}(\rho_c(\tau))\otimes\SUParts$.
Moreover, the map $a\mapsto\mathcal{T}(\LeviA\cdot W\otimes\mu)$ on $\GL_l$ is a surjection into the space of functions $a\mapsto W_l(\LeviA')$ where
$W_l$ varies in $\absdet^{(kc-kl-c+l)/2}W_{\psi}(\rho_l(\tau))$.
\end{lemma}
\begin{proof}
The proof follows, essentially, by a careful inspection of the ``root eliminations" in the proof of \cite[Lemma~9]{CFKmodels}. We provide the details.
We use the realization of $W_{\psi}(\rho_c(\tau))$ from \cite[\S~3.2]{CFKmodels}, which is applicable when $\tau\in\IrrGenUni(\GL_k)$, inductively, for the composition
$(l,c-2l,l)$ of $c$. The representation $\rho_c(\tau)$ is a subrepresentation of
\begin{align*}
\Ind_{R_{(kl,k(c-2l),kl)}}^{\GL_{kc}}(\Big(W_{\psi}(\rho_l(\tau))\otimes W_{\psi}(\rho_{c-2l}(\tau))
\otimes W_{\psi}(\rho_{l}(\tau))\Big)\delta_{P_{(kl,k(c-2l),kl)}}^{-1/(2k)}).
\end{align*}
Let $V<U_{(c^k)}$ be the subgroup of $v=(v_{i,j})\in U_{(c^k)}$ such that for all $i<j$,
\begin{align*}
v_{i,j}=\left(\begin{smallmatrix}0_l \\ v_{i,j}^1 & 0_{c-2l} \\ v_{i,j}^{2,1} & v_{i,j}^{2,2} &  0_l\end{smallmatrix}\right)\in\Mat_c,\qquad
v_{i,j}^2=\left(\begin{smallmatrix}v_{i,j}^{2,1} & v_{i,j}^{2,2}\end{smallmatrix}\right),
\qquad\text{$v_{i,j}^1$ and $v_{i,j}^2$ are arbitrary}.
\end{align*}
For integers $l_1,l_2\geq0$ consider the permutation matrix $\kappa_{l_1,l_2}\in\GL_{k(l_1+l_2)}$ such that
${}^{\kappa_{l_1,l_2}}M_{((l_1,l_2)^k)}=M_{(l_1^k,l_2^k)}$ and, regarded as a permutation, $\kappa_{l_1,l_2}$ is of minimal length and preserves the order of the integers of the same size in the composition $(l_1,l_2)^k$ (see \eqref{kappa} in the appendix). Put
$\kappa=\diag(I_{kl},\kappa_{c-2l,l})\kappa_{l,c-l}\in\GL_{kc}$.
By \cite[Lemma~9]{CFKmodels} (applied twice), we can realize the $(k,c)$ functional $\lambda$ on $\rho_c(\tau)$ using the absolutely convergent integral
$\lambda(\xi)=\int\limits_V\xi(\kappa v)\,dv$, where $\xi\in V_{\rho_c(\tau)}$. See also \cite[\S~5.3]{CFK2022}.

For each $v\in V$, ${}^{\LeviZ^{-1}}v=uv_z$ for some $u\in U_{(c,(k-1)c)}$, where $v_z\in V$ depends on $v$ and $z$, but we can change variables in $v_{1,2}$ to remove the dependency on $z$. Since $\psi_k$ is trivial on $V$, it follows that $\psi_k({}^{\LeviZ^{-1}}v)$ depends only on $z$ and $v_{1,2}$ (because these specify the block $u_{1,2}$ of $u$). Also observe that ${}^{\kappa}\LeviZ\in U_{(kl,k(c-2l),kl)}$, so that $\xi(\kappa\LeviZ)=\xi(\kappa)$. We obtain
\begin{align}\label{eq:intergal with z}
\int\limits_V\xi(\kappa v\LeviZ)\,dv=\int\limits_V\xi(\kappa v)\psi(\tr(<z,v>))\,dv,
\end{align}
where $\psi(\tr(<z,v>))$ is a bilinear pairing when we identify $z=(z_1,z_2,z_3)\in \Mat_{l\times n-l}\times\Mat_{l\times c-n-l}\times C_{U_R}$ as explained above;
this pairing is non-degenerate on the left and degenerate on the right (there is $v\ne0$ such that $\psi(\tr(<z,v>))=0$ for all $z$).

Thus, the absolute convergence of the integral~\eqref{Eq:integral W and mu} follows from that of $\lambda(\xi)$.

Define a total order $\succ$ on the triplets $\{(i,j,d)\}_{1\leq i<j\leq k,1\leq d\leq 2}$ by $(i,j,2)\succ(i',j',1)$ for all $i,j,i',j'$, and for a fixed $d\in\{1,2\}$
order the triplets by regarding them as blocks of $V$, starting from $(k-1,k)$ and proceeding along the diagonals, bottom to top, left to right:
\begin{align*}
(k-1,k,d)\succ (k-2,k-1,d)\succ\ldots\succ(1,2,d)\succ(k-2,k,d)\succ\ldots\succ(1,k,d).
\end{align*}
Let $(i,j,d)_-$ be the maximal $(i',j',d')$ such that $(i,j,d)\succ(i',j',d')$, or the empty set if $(i,j,d)=(1,k,1)$. Let
\begin{align*}
&V_{\succ(i,j,d)}=\{v\in V:v^{d'}_{i',j'}=0,\forall (i',j',d')\succ(i,j,d)\}<V.
\end{align*}
Note that $V_{\succ((i,j,d)_-)}$, which is understood to be the trivial group for $(i,j,d)=(1,k,1)$, is a normal subgroup of $V_{\succ(i,j,d)}$, and the quotient, which is also a subgroup of
$V_{\succ(i,j,d)}$, is identified with the block $v_{i,j}^d$. For a fixed $z\in U_R$ and $(i,j,d)\succ(1,2,2)$ define
\begin{align*}
\mathcal{T}_{(i,j,d)}^{\,\LeviZ}(\xi)=\int\limits_{V_{\succ(i,j,d)}}\xi(\kappa v)\psi(\tr(<z,v>))\,dv.
\end{align*}
By definition $V_{\succ(k-1,k,2)}=V$. By \eqref{eq:intergal with z}, and writing $W=W_{\xi}$, $W_{\xi}(h)=\lambda(h\cdot\xi)$,
\begin{align}\label{eq:TW first}
\mathcal{T}(W_{\xi}\otimes\mu)=\int\limits_{U_R}\mathcal{T}_{(k-1,k,2)}^{\,\LeviZ}(\xi)\mu(z)\,dz.
\end{align}
Note also that $\lambda=\mathcal{T}_{(k-1,k,2)}^{\,I_{kc}}$.

Let $X_{i,j}^+<\Mat_{kc}$ be the subgroup of matrices $x=(x_{i,j})_{1\leq i,j\leq k}$ such that $x_{i',j'}=0$ for all $(i',j')\ne(i,j)$ and
\begin{align*}
x_{i,j}=\left(\begin{smallmatrix}0_l & x_{i,j}^1 & x_{i,j}^{2,1} \\  & 0_{c-2l} & x_{i,j}^{2,2} \\  & &  0_l\end{smallmatrix}\right)\in\Mat_c,\qquad
x_{i,j}^2=\left(\begin{smallmatrix}x_{i,j}^{2,1} \\x_{i,j}^{2,2}\end{smallmatrix}\right),
\qquad\text{$x_{i,j}^1$ and $x_{i,j}^2$ are arbitrary}.
\end{align*}
Define $X_{i,j}=I_{kc}+X_{i,j}^+<\GL_{kc}$ and note that $X_{i,i}<N_{\GL_{kc}}\cap M_{(c^k)}$ and for $i>j$, $X_{i,j}<N_{\GL_{kc}}^-$.
For $d\in\{1,2\}$, let $X_{i,j}^{d}<X_{i,j}$ be the subgroup such that $x_{i,j}^{d'}=0$ for $d'\ne d$.

Let $1\leq i<j\leq k$ and $d\in\{1,2\}$. For any $\phi\in\mathcal{S}(X_{j,i+1}^{d})$ and $\phi'\in\mathcal{S}(v_{i,j}^{d})$, define
\begin{align}\label{eq:convolution maps 1}
&\phi(\xi)=\int\limits_{X_{j,i+1}^{d}}x\cdot\xi\,\phi(x)\,dx,\\\label{eq:convolution maps 2}
&\phi'(\xi)=\int\limits_{v_{i,j}^{d}}v\cdot\xi\,\phi'(v)\,dv.
\end{align}
Here $v_{i,j}^{d}$ is regarded as a subgroup of $V$, as explained above.
Let $\widehat{\phi}$ denote the Fourier transform of $\phi$ with respect to $\psi$. Then for
$(i,j,d)\succ(1,2,2)$, $\mathcal{T}_{(i,j,d)}^{\,I_{kc}}(\phi(\xi))=\mathcal{T}_{(i,j,d)_-}^{\,I_{kc}}(\widehat{\phi}(\xi))$, as can be directly checked.
Moreover, for such triplets both $X_{j,i+1}$ and $v_{i,j}^2$ are contained in $\diag(I_{2c},\GL_{(k-2)c})$, whence their elements stabilize $\psi(\tr(<z,v>))$. Hence for each $z$,
\begin{align}\label{eq:T to T}
\mathcal{T}_{(i,j,d)}^{\,\LeviZ}(\phi(\xi))=\mathcal{T}_{(i,j,d)_-}^{\,\LeviZ}(\widehat{\phi}(\xi)).
\end{align}
Therefore by \eqref{eq:TW first} and by repeatedly applying \eqref{eq:T to T},
\begin{align*}
\mathcal{T}(W_{\xi}\otimes\mu)=\int\limits_{U_R}\mathcal{T}_{(1,2,2)}^{\,\LeviZ}(\xi')\mu(z)\,dz=
\int\limits_{U_R}\int\limits_{V_{\succ(1,2,2)}}\xi'(\kappa v)\psi(\tr(<z,v>))\mu(z)\,dv\,dz,
\end{align*}
where $\xi'$ is obtained from $\xi$ by repeatedly convolving it against Schwartz functions.

Consider a pure tensor $\mu$, i.e., $\mu(z)=\prod_{i=1}^3\mu_i(z_i)$ for Schwartz functions $\mu_i$. After a change of variables in $v_{1,2}^1$, when we integrate over $U_R$ we obtain
\begin{align*}
\int\limits_{U_R}\mu(z)\psi(\tr(<z,v>))\,dz=\widehat{\mu}_1(y_1)\widehat{\mu}_2(y_2)\widehat{\mu}_3(v_{1,2}^{2,1}),
\end{align*}
where $y_1\in\Mat_{n-l\times l}$, $y_2\in \Mat_{c-n-l\times l}$ and $v_{1,2}^{1}=\left(\begin{smallmatrix}y_1\\y_2\end{smallmatrix}\right)$.
The character $z_3\mapsto\psi(\tr(z_3v_{1,2}^{2,1}))$ is degenerate. To control the remaining coordinates of $v_{1,2}^{2,1}$ we replace $\xi'$ with $\phi(\xi')$ where $\phi$ is a Schwartz function on a subgroup of $\{x\in X_{2,2}^2:x_{2,2}^{2,2}=0\}$. It follows that
\begin{align}\label{eq:TW third}
\int\limits_{U_R}\int\limits_{V_{\succ(1,2,2)}}\phi(\xi')(\kappa v)\psi(\tr(<z,v>))\mu(z)\,dv\,dz=\int\limits_{
\{v\in V_{\succ(k-2,k,2)}:v_{1,2}^1=0\}}\widehat{\mu}_1(\widehat{\mu}_2(\widehat{\mu}_3(\widehat{\phi}(\xi'))))(\kappa v)\,dv,
\end{align}
where the convolution sections on the r.h.s.~ are defined similarly to \eqref{eq:convolution maps 2}.

Now we proceed as in \cite[Lemma~9]{CFKmodels}: define for any $(i,j,d)\prec(1,2,2)$ such that $(i,j,d)\ne(1,2,1)$,
$\mathcal{T}_{(i,j,d)}(\xi)=\int\limits_{V_{\succ(i,j,d)}}\xi(\kappa v)\,dv$, and $\mathcal{T}_{(1,k,1)_-}(\xi)=\xi(\kappa)$. Note that in \eqref{eq:TW third} we have already zeroed out both $v_{1,2}^2$ and $v_{1,2}^1$, hence for $k>2$ we will pass from $\mathcal{T}_{(2,3,1)}$ directly to $\mathcal{T}_{(k-2,k,1)}$ (skipping $\mathcal{T}_{(1,2,1)}$).
We use convolutions by Schwartz functions on $X_{j,i+1}^d$ as above, and obtain identities similar to \eqref{eq:T to T} (with $\LeviZ=I_{kc}$).

We can now deduce the continuity of $\mathcal{T}$ inductively. The map
$V_{\rho_c(\tau)}\otimes\mathcal{S}(X_{j,i+1}^{d})\to V_{\rho_c(\tau)}$ defined by \eqref{eq:convolution maps 1}, the map
\eqref{eq:convolution maps 2} and the similar convolution maps applied in \eqref{eq:TW third} are all continuous and open, and by \cite{DM} they are also surjective. Since the last map $\mathcal{T}_{(1,k,1)_-}$ is clearly continuous, we conclude that $\mathcal{T}$ is continuous and hence a functional on
$W_{\psi}(\rho_c(\tau))\otimes\SUParts$.

For the last assertion, let $W=W_{\xi}$ and assume $\mathcal{T}(W_{\xi}\otimes\mu)=\xi'(\kappa)$, for some $\xi'$. Since the commutator of $V$ and $\diag(\GL_l,I_{kc-l})$ is trivial, and for $1\leq i<j$ the commutator of $X_{j,i+1}$ and $\diag(\GL_l,I_{kc-l})$ is trivial, we get that
$\mathcal{T}(\LeviA\cdot W_{\xi}\otimes\mu)=\LeviA\cdot\xi'(\kappa)$, for the same $\xi'$, for all $a\in\GL_l$. It remains to observe that
${}^{\kappa}\LeviA=\LeviA'$ and that the function $h\mapsto |\det{h}|^{(-kc+kl+c-l)/2}\xi(\diag(h,I_{k(c-l)}))$, $h\in\GL_{kl}$, belongs to the space of $W_{\psi}(\rho_l(\tau))$.
\end{proof}
Consider the following integral, defined for a matrix coefficient $\omega_l$ of $\sigma^{\vee}$,
$W\in \chi_{\pi}W_{\psi}(\rho_c(\tau))$, $\phi\in\mathcal{S}(\Mat_l)$ and $\mu\in\SUParts$ by
\begin{align}\label{eq:mult inner start 2}
\ZInnerInner(s,\omega_l,W,\phi,\mu)=\int\limits_{\GL_l}\int\limits_{U_R}\omega_l(a)W(\LeviZ\LeviA)\mu(z)\phi(a)|\det{a}|^{s-(kc-c-l+1)/2}\,dz\,da.
\end{align}
Let $V_{\sigma}$ be the space of $\sigma$.
\begin{proposition}\label{proposition:inner 2}
Let $\sigma\in\Irr(\GL_l)$, $\tau\in\IrrGenUni(\GL_k)$, and if $F$ is non-archimedean assume $\sigma$ is unramified.
The integral~\eqref{eq:mult inner start 2} is absolutely convergent in $\Real(s)\gg0$ independently of the data, and admits meromorphic continuation which is continuous as a form on $V_{\sigma}\times V_{\sigma}^{\vee}\times \chi_{\pi}W_{\psi}(\rho_c(\tau))\times\mathcal{S}(\Mat_l)\times\SUParts$. For each $s$ there are data
$(\omega_l,W,\phi,\mu)$ with $\phi\in\SchwartzC(\Mat_l)$ and $\mu\in\SUPartsc$ such that
\begin{align}\label{eq:InnerInner pole}
\ZInnerInner(s,\omega_l,W,\phi,\mu)/L(s,\sigma^{\vee}\times\chi_{\pi}\tau)
\end{align}
is nonzero at $s$.
\end{proposition}
\begin{proof}
For convergence, first recall that over a non-archimedean field we can bound $|\omega_l(a)|$ using the central exponents of $\sigma^{\vee}$, and over archimedean fields
we have $|\omega_l(a)|\leq C||a||^D$ for some constants $C,D>0$ such that $D$ depends only on $\sigma^{\vee}$, where $||\cdot||$ is a norm on $\GL_l$ (see \cite[\S~2.A.2]{Wal88}).
Then note that $\LeviZ\LeviA\in\diag(\GL_c,I_{(k-1)c})$ and thus to bound $|W(\LeviZ\LeviA)|$ we can use Lemma~\ref{lemma:main convergence}.

Let $\mathcal{T}$ be given by Lemma~\ref{lemma:reduce c to l} for the $(k,c)$ model $\chi_{\pi}W_{\psi}(\rho_c(\tau))=W_{\psi}(\rho_c(\chi_{\pi}\tau))$, and let $W_l\in W_{\psi}(\rho_l(\chi_{\pi}\tau))$ be such that
$\mathcal{T}(\LeviA\cdot W\otimes\mu)=|\det{a}|^{(kc-kl-c+l)/2}W_l(\LeviA')$. In $\Real{s}\gg0$,
\begin{align*}
\ZInnerInner(s,\omega_l,W,\phi,\mu)=\int\limits_{\GL_l}\omega_l(a)W_l(\LeviA')\phi(a)|\det{a}|^{s-(kl-2l+1)/2}\,da.
\end{align*}
This integral is described in Appendix~\ref{appendix:GL2 poles} (recall that $k>1$).
It is absolutely convergent in $\Real(s)\gg0$ independently of the data, and admits meromorphic continuation
which is continuous in $\omega_l$, $W_l$ and $\phi$. Moreover for each $s\in\C$, for suitable data $(\omega_l,W_l,\phi)$ the quotient
$Z(s,\omega_l,W_l,\phi)/L(s,\sigma^{\vee}\times\chi_{\pi}\tau)$ is nonzero. This follows from Theorem~\ref{apptheorem:GL2 poles} if $s$ is a pole of $L(s,\sigma^{\vee}\times\chi_{\pi}\tau)$
and from Theorem~\ref{theorem:gcd} otherwise. (Note that in the appendix, the Schwartz function $\phi$ is ``absorbed" into $W_l$.)

Now the remaining assertions for the integral~\eqref{eq:mult inner start 2} follow at once, using Lemma~\ref{lemma:reduce c to l}, and note that by continuity
we can assume that $\phi$ and $\mu$ are compactly supported.
\end{proof}

We return to the integral $Z(s,\omega,f)$. Fix $T_G$ and $B_G$ such that $\embedding(T_G,T_G)<T_H$ and
$\embedding(B_G,B_G)<B_H$ (this is possible), and recall that $\embedding(K_G,K_G)<K_H$.
Let $\pi\in\Irr(G)$ and assume that $\pi$ is the Langlands quotient of $\sigma\rtimes\pi'=\Ind_{R}^G((\sigma\otimes\pi')\circ i_{M_R})$ with the notation of \eqref{LQpi}, where $R<G$ is a standard maximal parabolic subgroup with $M_R\cong\GL_l\times \mathcal{G}_{c-2l}$ and $l>0$.
When $c=2l$ and $G\ne\Sp_c$ we can choose the Siegel parabolic subgroup $R$ such that
$\embeddingR(M_R)<M_{P'}$. Set $\varepsilon=\sigma\otimes\pi'$ and $G'=\mathcal{G}_{c-2l}$.

The representation $\pi$ is isomorphic to the image of the convergent intertwining operator
$I:\Ind_{R}^G(\varepsilon\circ i_{M_R})\to \Ind_{R^-}^G(\varepsilon\circ i_{M_R})$ given by
\begin{align*}
I(\varphi)(g)=\int\limits_{U_R^-}\varphi(yg)\,dy.
\end{align*}
The convergence of this vector-valued integral is explained as follows. Regard $\varphi$ as
a function in the space of the representation $(\times_{i=1}^d\sigma_i)\rtimes\pi'$ induced from a standard parabolic subgroup $R_0$ contained in $R$, let $V_*$ be the space of
$(\otimes_{i=1}^d\sigma_i)\otimes\pi'$ and fix the standard pairing $<,>_*$ on $V_*\otimes V_*^{\vee}$. Then, for any $K_G\cap M_{R_0}$-finite $w\in V_*^{\vee}$, the integrals $\int_{U_{R_0}^-}|<\varphi(y),w>_*|\,dy$ are finite. See Waldspurger \cite[\S~IV.1]{W} and Wallach \cite[\S~5]{Wal88}. As known to experts, in the archimedean case this convergence can be extended to (smooth) $w$, see e.g., Knapp and Stein \cite{KS1} who studied $L^2$ convergence of intertwining operators, which is more difficult. We provide a quick proof of convergence, for any $w$, in Lemma~\ref{lemma:intconvergence} below, because we need this level of generality.

The representation $\pi^{\vee}$ is the Langlands quotient of $\Ind_{R^-}^G(\varepsilon^{\vee}\circ i_{M_R})$ (also $\pi^{\vee}\subset\Ind_{R}^G(\varepsilon^{\vee}\circ i_{M_R})$), and
is isomorphic to the image of the convergent intertwining operator
\begin{align}\label{eq:Iphee}
I^{\vee}:\Ind_{R^-}^G(\varepsilon^{\vee}\circ i_{M_R})\to \Ind_{R}^G(\varepsilon^{\vee}\circ i_{M_R}),\qquad
I^{\vee}(\varphi^{\vee})(g)=\int\limits_{U_R}\varphi^{\vee}(zg)\,dz.
\end{align}

Denote the canonical pairing on $\varepsilon\otimes\varepsilon^{\vee}$ by $\langle,\rangle$.
Let $\varphi^{\vee}$ and $\varphi$ be vectors in the spaces of $\Ind_{R^-}^G(\varepsilon^{\vee}\circ i_{M_R})$ and $\Ind_{R}^G(\varepsilon\circ i_{M_R})$, resp.
Then, the function
\begin{align}\label{eq:matrix coefficient omega formula}
\omega_{\varphi,\varphi^{\vee}}(g)=\int\limits_{M_R\backslash G}\langle\varphi(g_0),\varphi^{\vee}(g_0g)\rangle\, dg_0
=\int\limits_{R^-\backslash G}\int\limits_{U_R^-}\langle\varphi(yo),\varphi^{\vee}(og)\rangle\, dy\, do\qquad (g\in G)
\end{align}
is a matrix coefficient of $\pi^{\vee}$. Note that this integral is absolutely convergent because $R^-\backslash G$ is compact and $I$ is convergent,
where over archimedean fields we use Lemma~\ref{lemma:intconvergence}.

Substituting \eqref{eq:matrix coefficient omega formula} into $Z(s,f,\omega_{\varphi,\varphi^{\vee}})$ and using Lemma~\ref{corollary:inv of U on g giota} we obtain
\begin{align}\label{eq:integral 1}
\int\limits_{C_G^{\circforgspin}G^{\triangle}\backslash G\times G}
\int\limits_{M_R\backslash G}\int\limits_{U_0}\langle\varphi(g_0\dintegrallocalVarG_1),\varphi^{\vee}(g_0\dintegrallocalVarG_2)\rangle f(s,\delta u_0
\embeddingL(\dintegrallocalVarG_1)\Es{\dintegrallocalVarG_2})\,\psi_U(u_0)\,du_0\,dg_0\,\dintegrallocal.
\end{align}
Here $G^{\triangle}$ is the diagonal embedding of $G$ in $G\times G$.
We regard the $dg_0$-integral in \eqref{eq:integral 1} as an integral over
$M_R^{\triangle}\backslash G^{\triangle}$, collapse it into the $\dintegrallocal$-integral then rewrite the integration
domain using
\begin{align}\label{eq:collapsing formula}
\int\limits_{C_G^{\circforgspin}M_R^{\triangle}\backslash G\times G}
=\int\limits_{R\times R^-\backslash G\times G}\,\int\limits_{C_G^{\circforgspin}\backslash R}\,\int\limits_{U_R^-}.
\end{align}
The integral~\eqref{eq:integral 1} becomes
\begin{align}\label{eq:integral 1.5}
&\int\limits_{R\times R^-\backslash G\times G}\,
\int\limits_{U_R}\int\limits_{C_G^{\circforgspin}\backslash M_R}\int\limits_{U_R^-}\int\limits_{U_0}
\delta_{R^-}^{-1/2}(m)
\langle\varphi(\dintegrallocalVarG_1),\varepsilon^{\vee}(m)\varphi^{\vee}(\dintegrallocalVarG_2)\rangle\\&
 \quad f(s,\delta u_0
 \embeddingL(z\dintegrallocalVarG_1)
 \Es{ym\dintegrallocalVarG_2}) \,\psi_U(u_0)\,du_0\,dy\,dm\,dz\,\dintegrallocal.\notag
\end{align}
By Lemma~\ref{corollary:inv of U on g giota} we can replace $\embeddingL(z)$ with $\embeddingR({}^{\iota}z^{-1})=\Es{z^{-1}}$ and obtain (after changing $z\mapsto z^{-1}$)
\begin{align}\label{eq:mult 2 start}
&\int\limits_{R\times R^-\backslash G\times G}\,
\int\limits_{U_R}\int\limits_{C_G^{\circforgspin}\backslash M_R}\int\limits_{U_R^-}\int\limits_{U_0}
\delta_{R^-}^{-1/2}(m)
\langle\varphi(\dintegrallocalVarG_1),\varepsilon^{\vee}(m)\varphi^{\vee}(\dintegrallocalVarG_2)\rangle\\&
 \quad f(s,\delta u_0
 \Es{zym\dintegrallocalVarG_2} \embeddingL(\dintegrallocalVarG_1)) \,\psi_U(u_0)\,du_0\,dy\,dm\,dz\,\dintegrallocal.\notag
\end{align}
The passage from $Z(s,\omega_{\varphi,\varphi^{\vee}},f)$ to \eqref{eq:mult 2 start} in $\Real(s)\gg0$ will be justified by Fubini's theorem:
see Lemma~\ref{Brooks convergence} and the proof of Corollary~\ref{corollary:main poles}, below.

For $m\in M_R$ denote $\omega^{e_G}_{\varphi,\varphi^{\vee}}(m)=\langle\varphi(e_G),\varepsilon^{\vee}(m)\varphi^{\vee}(e_G)\rangle$, which is a matrix coefficient of $\varepsilon^{\vee}$.
We write the inner $du_0\,dy\,dm\,dz$-integral of \eqref{eq:mult 2 start} in the form
\begin{align}\label{eq:integral 1.7}
&Z^1(s,\omega^{e_G}_{\varphi,\varphi^{\vee}},f)=\int\limits_{U_R}\int\limits_{C_G^{\circforgspin}\backslash M_R}\int\limits_{U_R^-}\int\limits_{U_0}
\delta_{R^-}^{-1/2}(m)\omega^{e_G}_{\varphi,\varphi^{\vee}}(m)f(s,\delta u_0\Es{zym})\,\psi_U(u_0)\,du_0\,dy\,dm\,dz.
\end{align}

Fix $T_{G'}$ to be the projection of $i_{M_R}(T_G\cap i_{M_R}^{-1}(I_l,G'))$ into $G'$ and fix $B_{G'}$ similarly. Let $R'$ denote the standard Siegel parabolic subgroup of $G'$ such that $\embeddingR({}^{\iota}\,(i_{M_R}^{-1}(I_l,M_{R'}))\,)<M_{P'}$.
\begin{proposition}\label{proposition:integral over the open cell}
Let $\pi\in\Irr(G)$ and $\tau\in\IrrGen(\GL_k)$ and assume that $\pi$ is written with the notation of \eqref{LQpi}. When the integral $Z^1(s,\omega^{e_G}_{\varphi,\varphi^{\vee}},f)$ is absolutely convergent, it equals
\begin{align}\label{eq:mult 6 start}
&\int\limits_{U_R}\int\limits_{C_G^{\circforgspin}\backslash U_{R'}{R'}^-}\int\limits_{U_R^-}\int\limits_{U_0}\int\limits_{\GL_l}\delta_{R}^{-1/2}(a_R)|\det{a}|^{(-k+1)c}\omega^{e_G}_{\varphi,\varphi^{\vee}}(m)
\\& f(s,i_{M_P}^{-1}(\LeviZ\LeviA ,1)\delta_0\Levi{\Es{y}}\Es{m_{g'}}\BigUni(u_0,z,a,y,g'))\,\psi_U(u_0)\,da\,du_0\,dy\,dg'\,dz.\notag
\end{align}
Here $m_{g'}=i_{M_R}^{-1}(I_l,g_0')$ where $g_0'\in M_{R'}$ is the projection of $g'$ into $M_{R'}$, and
\begin{align*}
\BigUni(U_0,U_R,\GL_l,U_R^-,C_G^{\circforgspin}\backslash U_{R'}{R'}^-)
\end{align*}
is an open subset of $U_{P'}$ with the following properties:
\begin{enumerate}[leftmargin=*]
\item Over a non-archimedean (resp., archimedean) field the map $\BigUni:(u_0,z,a,y,g')\to \BigUni(u_0,z,a,y,g')$ is a homeomorphism (resp., diffeomorphism).
\item If $g'$ is confined to a compact subset of $C_G^{\circforgspin}\backslash U_{R'}{R'}^-$, the map $\BigUni$ restricts to a homeomorphism (resp., diffeomorphism)
of $U_0\times U_R\times \GL_l\times U_R^-$.
\end{enumerate}
\end{proposition}
\begin{proof}
We start with the integral $Z^1(s,\omega^{e_G}_{\varphi,\varphi^{\vee}},f)$. Since $\Es{z}\in \mathrm{St}_{M_Q}(\psi_U)$, we have ${}^{\Es{z}^{-1}}u_0\in U$. After changing variables in $u_0$ and using the left $\psi_k$-equivariance of $f$ we obtain
\begin{align}\label{eq:mult 3 start}
&\int\limits_{U_R}\int\limits_{C_G^{\circforgspin}\backslash M_R}\int\limits_{U_R^-}\int\limits_{U_0}
\delta_{R}^{1/2}(m)\omega^{e_G}_{\varphi,\varphi^{\vee}}(m)f(s,\delta \Es{z}u_0\Es{ym})\,\psi_U(u_0)\,du_0\,dy\,dm\,dz.
\end{align}

Recall that $\delta=\delta_0\delta_1$, ${}^{\delta_0}U_{P'}^-=U_P$ and $\delta_1\in i_{M_Q}^{-1}(I_c,\ldots,I_c,\mathcal{G}_{2c})\cap U_{P'}$. Since $\Es{z}=\Levi{\Es{z}}\Unim{\Es{z}}$,
\begin{align*}
f(s,\delta\Es{z})=f(s,\delta_0\Es{z}{}^{\Es{z}^{-1}}\delta_1)=f(s,\delta_0\Levi{\Es{z}}{}^{\Es{z}^{-1}}\delta_1)=
f(s,i_{M_P}^{-1}(\LeviZ,1)\delta_0\Uni{{}^{\Es{z}^{-1}}\delta_1})
\end{align*}
(see \eqref{eq:LeviZ}). Note that $\Uni{{}^{\Es{z}^{-1}}\delta_1}\in i_{M_Q}^{-1}(I_c,\ldots,I_c,\mathcal{G}_{2c})\cap U_{P'}$.

Write $m=a_Rg_R'$ where $i_{M_R}(a_R)=a\in\GL_l$ and $i_{M_R}(g_R')=g'\in G'$.
We conjugate $\Es{y}$ and $u_0$ by $\Es{a_R}$ on the right, this changes the measures $dy$ and $du_0$ and we obtain
\begin{align}\label{eq:mult 4 start}
&\int\limits_{U_R}\int\limits_{C_G^{\circforgspin}\backslash G'}\int\limits_{U_R^-}\int\limits_{U_0}\int\limits_{\GL_l}
\delta_{R}^{-1/2}(a_R)|\det{a}|^{(-k+1)c}
 \omega^{e_G}_{\varphi,\varphi^{\vee}}(m)\\&\quad f(s,i_{M_P}^{-1}(\LeviZ,1)\delta_0\,\Uni{{}^{\Es{z}^{-1}}\delta_1}\,\Es{a_R}u_0\Es{y}\Es{g_R'})\,\psi_U(u_0)\,da\,du_0\,dy\,dg'\,dz.\notag
\end{align}

We have $\Es{a_R}\in M_{P'}$ and $i_{M_P}({}^{\delta_0}\Es{a_R})=\LeviA$. Note that for $H=\GSpin_{2kc}$, using \cite[Lemma~6.2.5]{HS} we see that the projection of $i_{M_P}({}^{\delta_0}\Es{a_R})$ into $\mathcal{G}_0$ is trivial because $i_{M_{P'}}(\Es{a_R})=(a^*,\det a)$. (Further similar conjugations will
use this lemma implicitly.) Denote
\begin{align*}
\BigUni(u_0,z,a)=({}^{\Es{a_R}^{-1}}\Uni{{}^{\Es{z}^{-1}}\delta_1})\,u_0\in U_{P'}.
\end{align*}
The integral~\eqref{eq:mult 4 start} equals
\begin{align}\label{eq:mult 5 start}
&\int\limits_{U_R}\int\limits_{C_G^{\circforgspin}\backslash G'}\int\limits_{U_R^-}\int\limits_{U_0}\int\limits_{\GL_l}
\delta_{R}^{-1/2}(a_R)|\det{a}|^{(-k+1)c}
 \omega^{e_G}_{\varphi,\varphi^{\vee}}(m)\\&\quad f(s,i_{M_P}^{-1}(\LeviZ\LeviA,1)\delta_0\BigUni(u_0,z,a)\Es{y}\Es{g_R'})\,\psi_U(u_0)\,d(\cdots).\nonumber
\end{align}

Now we proceed with $\Es{y}$ and $\Es{g_R'}$. First, $\Es{y}=\Levi{\Es{y}}\Uni{\Es{y}}$ and we let
\begin{align*}
\BigUni(u_0,z,a,y)=({}^{\Levi{\Es{y}}^{-1}}\BigUni(u_0,z,a))\,\Uni{\Es{y}}\in U_{P'}.
\end{align*}
We turn to $\Es{g_R'}$. It is convenient at this point to assume $g'\in U_{R'}{R'}^-$. After removing a set of measure zero, which does not
change the integral, this assumption is justified. We can then write $\Es{g_R'}=h_-h_+$ where $h_-\in U_{P'}^-$ and $h_+\in P'$. Recall that
$g'_0$ denotes the projection of $g'$ into $M_{R'}$. Then $h_+=\Es{m_{g'}}\Uni{h_+}$. The projection of $\BigUni(u_0,z,a,y)$ into any of the root subgroups of $\embeddingR({}^{\iota}(i_{M_R}^{-1}(I_l,U_{R'}^-)))$ is $0$. Thus
${}^{(h_-)^{-1}}\BigUni(u_0,z,a,y)\in P'$. Moreover, for any $m\in M_{P}$, $f(s,m\delta_0h_-)=f(s,m\delta_0)$ because ${}^{\delta_0}U_{P'}^-=U_P$. We define
\begin{align*}
\BigUni(u_0,z,a,y,g')=({}^{(h_-\Es{m_{g'}})^{-1}}\BigUni(u_0,z,a,y))\Uni{h_+}\in P'.
\end{align*}
Then the integral~\eqref{eq:mult 5 start} becomes \eqref{eq:mult 6 start}, i.e.,
 \begin{align*}
&\int\limits_{U_R}\int\limits_{C_G^{\circforgspin}\backslash U_{R'}{R'}^-}\int\limits_{U_R^-}\int\limits_{U_0}\int\limits_{\GL_l}\delta_{R}^{-1/2}(a_R)|\det{a}|^{(-k+1)c}\omega^{e_G}_{\varphi,\varphi^{\vee}}(m)
\\&\quad f(s,i_{M_P}^{-1}(\LeviZ\LeviA ,1)\delta_0\Levi{\Es{y}}\Es{m_{g'}}\BigUni(u_0,z,a,y,g'))\,\psi_U(u_0)\,da\,du_0\,dy\,dg'\,dz.\notag
\end{align*}

The statement on the properties of $\BigUni(U_0,U_R,\GL_l,U_R^-,C_G^{\circforgspin}\backslash U_{R'}{R'}^-)$ can be verified by direct matrix multiplication, but already
follows from \cite[Lemma~7.7]{Yamana} since the projection of $\embedding(G,G)$ into
$i_{M_Q}^{-1}(I_c,\ldots,I_c,\mathcal{G}_{2c})$
is compatible with the construction for $k=1$.
\end{proof}
We introduce the following subspace of sections, which we use for the integral \eqref{eq:integral 1.7}.
For any $W\in \chi_{\pi}W_{\psi}(\rho_c(\tau))$ and $\Phi\in\SchwartzC(U_{P'})$, let $f_{W,\Phi}\in V(\tau,c)$ be the section whose support is contained in $PU_P^-$, and
such that for any $b\in M_P$ with $i_{M_P}(b)\in\GL_{kc}$, and any ${}^{\delta_0}u\in U_P^-$,
\begin{align*}
f_{W,\Phi}(s,b\,{}^{\delta_0}u)=|\det{i_{M_P}(b)}|^{s-1/2}\delta_P^{1/2}(b)W(i_{M_P}(b))\Phi(u).
\end{align*}
\begin{proposition}\label{proposition:Pole Inner Z1}
Suppose that $\pi\in\IrrUnrForPoles(G)$ and $\tau\in\IrrGenUni(\GL_k)$. We keep the notation \eqref{LQpi}.
For any $s_0\in\C$, there exist data $\omega^{e_G}_{\varphi,\varphi^{\vee}}$ and $f\in V(\tau,c)$ such that
$Z^1(s,\omega^{e_G}_{\varphi,\varphi^{\vee}},f)$ is absolutely convergent in $\Real(s)\gg0$, admits meromorphic continuation, and as a meromorphic function
$Z^1(s,\omega^{e_G}_{\varphi,\varphi^{\vee}},f)/L(s,\sigma^{\vee}\times\chi_{\pi}\tau)$ is nonzero at $s_0$.
\end{proposition}
\begin{proof}
Let $W\in\chi_{\pi}W_{\psi}(\rho_c(\tau))$ be given.
By Proposition~\ref{proposition:integral over the open cell} we can choose, for any $\phi_0\in\SchwartzC(U_0\times U_P^-\times C_G^{\circforgspin}\backslash U_{R'}{R'}^-)$, $\phi\in\SchwartzC(\Mat_l)$ and $\mu\in\SUPartsc$, a function $\Phi\in\SchwartzC(U_{P'})$ such that
\begin{align*}
\Phi(\BigUni(u_0,z,a,y,g'))=\phi_0(u_0,y,g')\phi(a)\mu(z).
\end{align*}
Let $f=\delta_0^{-1}\cdot f_{W,\Phi}\in V(\tau,c)$.
For this choice of $\Phi$, the absolute convergence of $Z^1(s,\omega^{e_G}_{\varphi,\varphi^{\vee}},f)$ in $\Real(s)\gg0$ is clear, from \eqref{eq:mult 6 start} and
the proof of Proposition~\ref{proposition:integral over the open cell}, and from the proof of the absolute convergence of the $dz\,da$-integral which is of the form \eqref{eq:mult inner start 2}. Then in $\Real(s)\gg0$,
\begin{align*}
Z^1(s,\omega^{e_G}_{\varphi,\varphi^{\vee}},f)=&\int\limits_{C_G^{\circforgspin}\backslash U_{R'}{R'}^-}\int\limits_{U_R^-}\int\limits_{U_0}
\ZInnerInner(s,\omega^{e_G}_{\varphi,\varepsilon^{\vee}(i_{M_R}^{-1}(I_l,g'))\varphi^{\vee}},i_{M_P}({}^{\delta_0}(\Levi{\Es{y}}\Es{m_{g'}}))\cdot
W,\phi,\mu)\\\notag&\,|\det{i_{M_P}({}^{\delta_0}\Es{m_{g'}})}|^{s-1/2}\delta_P^{1/2}({}^{\delta_0}\Es{m_{g'}})\phi_0(u_0,y,g')\,\psi_U(u_0)\,du_0\,dy\,dg'.
\end{align*}
Observe that the function $a\mapsto\omega^{e_G}_{\varphi,\varphi^{\vee}}(i_{M_R}^{-1}(a,e_{G'}))$ on $\GL_l$ is a matrix coefficient of $\sigma^{\vee}$, and that if $F$ is non-archimedean, then our assumption on $\pi$ implies that $\sigma$ is unramified. By Proposition~\ref{proposition:inner 2} and since $\phi_0$ is compactly supported, $Z^1(s,\omega^{e_G}_{\varphi,\varphi^{\vee}},f)$ is a meromorphic function.
The result now follows from
Proposition~\ref{proposition:inner 2} since $\ZInnerInner$ is continuous as a form and we can take $\phi_0$ with arbitrarily small compact support near the identity.
\end{proof}

\begin{lemma}\label{Brooks convergence}
Assume $F$ is archimedean, $\pi\in\Irr(G)$ and $\tau\in\IrrGen(\GL_k)$. Keep the notation \eqref{LQpi}.
The integrals \eqref{eq:mult 2 start} and $Z^1(s,\omega^{e_G}_{\varphi,\varphi^{\vee}},f)$
are absolutely convergent in $\Real(s)\gg0$ independently of the data, and are continuous functions of $\varphi$, $\varphi^{\vee}$ and $f$ in this domain.
\end{lemma}
\begin{proof}
We start with the integral~\eqref{eq:integral 1.7} and bound
\begin{align*}
&\int\limits_{U_R}\int\limits_{C_G^{\circforgspin}\backslash M_R}\int\limits_{U_R^-}\int\limits_{U_0}
\left|\delta_{R^-}^{-1/2}(m)\omega^{e_G}_{\varphi,\varphi^{\vee}}(m)f(s,\delta u_0\Es{zym})\right|\,du_0\,dy\,dm\,dz.
\end{align*}
First we write $z$ using the Iwasawa decomposition of $G$ with respect to $R^-$, namely $z=k_zm_zy_z$ where $k_z\in K_G$, $m_z\in M_R$ and $y_z\in U_R^-$.
Changing variables in $y$ and $m$, we obtain
\begin{align*}
&\int\limits_{U_R}\int\limits_{C_G^{\circforgspin}\backslash M_R}\int\limits_{U_R^-}\int\limits_{U_0}
\left|\delta_{R^-}^{-1/2}(m)\delta_{R^-}^{1/2}(m_z^{-1})\omega^{e_G}_{\varphi,\varphi^{\vee}}(m_z^{-1}m)f(s,\delta u_0\Es{k_zym})\right|\,du_0\,dy\,dm\,dz.
\end{align*}
Denote $m=a_Rg_R'$ with $i_{M_R}(a_R)=a\in\GL_l$ and $i_{M_R}(g_R')\in G'$. Write
$g_R'$ using the Cartan decomposition $g_R'=k't_R'k''$ where $i_{M_R}(k'),i_{M_R}(k'')\in K_{G'}$ and
$i_{M_{R'}}\circ i_{M_R}(t_R')=(t,t_0)$ for $t\in T_{\GL_{n-l}}$ and $t_0\in\mathcal{G}_0$.
Since we integrate over $C_G^{\circforgspin}\backslash M_R$, we can already
take $t_0=1$.

Note that if $G'\notin\{\SO_2,\GSpin_2\}$, then we can suppose that $|\det t|\leq1$.

Since $k'$ normalizes $U_R^-$, and by Lemma~\ref{corollary:inv of U on g giota} applied to $\Es{k_zk'}$, we obtain
\begin{align}\label{eq:int before split}
&\int\limits_{U_R}\int\limits_{\GL_l}\int\limits_{K_{G'}}\int\limits_{T_{\GL_{n-l}}}\int\limits_{K_{G'}}
\int\limits_{U_R^-}\int\limits_{U_0}
\left|\delta_{R^-}^{-1/2}(m)\delta_{R^-}^{1/2}(m_z^{-1})\omega^{e_G}_{\varphi,\varphi^{\vee}}(m_z^{-1}m)f(s,\delta u_0\Es{ya_Rt_R'k''}
k'_{z}
)\right|\\&\,du_0\,dy\,dk'\,dt\,dk''\,da\,dz.\nonumber
\end{align}
Here $k'_{z}=\embeddingL((k_zk')^{-1})$.

Our next task is to split the integral \eqref{eq:int before split} into summands, such that in each summand both $|\det{a}|\leq1$ and $|\det{t}|\leq 1$ (the latter already holds when $G'\notin\{\SO_2,\GSpin_2\}$). To obtain this we will use conjugations by Weyl elements which invert the projections of $\Es{a_R}$ and $\Es{t_R'}$ into $M_{P'}$. These conjugations will change $\Es{y}$ as well. First observe that
\begin{align*}
i_{M_{P'}}(\Es{a_R})=(\diag(I_{kc-l},a^*),\det{a}),\qquad
i_{M_{P'}}(\Es{t_R'})=(\diag(I_{kc-n},t^*,I_l),\det{t}).
\end{align*}
Here $\det{a}$ and $\det{t}$ are omitted if $G\ne\GSpin_c$.

Consider the cases when either $G=\Sp_{c}$ or $2\nmid c$ or both $c$ and $l$ are even. Then there is $\WeylElement_0\in W_G$ of minimal length such that
$i_{M_R}({}^{\WeylElement_0}a_R)=(a^*,\det{a})$ and ${}^{\WeylElement_0}t_R'=t_R'$. Put $w=\embedding(w_0,{}^{\iota}w_0)\in H$ for a representative $w_0\in G$ for $\WeylElement_0$.
If $G=\SO_c$ (resp., $\GSpin_c$) and $2|c$ and $2\nmid l$, then $\WeylElement_0$ with the above properties will be in $W_{\Orth_c}$ (resp., $W_{\GPin_c}$) and the representative $w_0$ will be in $\Orth_c$ (resp., $\GPin_c$); nonetheless, $\embedding$ extends to a map $\Orth_c\times\Orth_c\to \Orth_{2kc}$ (resp., $\GPin_c\times\GPin_c\to\GPin_{2kc}$) and
$w=\embedding(w_0,{}^{\iota}w_0)\in H$ because it represents an even permutation. Then in all cases
\begin{align*}
i_{M_{P'}}({}^{w}\Es{a_R})=(\diag(I_{kc-l},a),1),\qquad {}^{w}\Es{t_R'}=\Es{t_R'}.
\end{align*}

Similarly if $G'\in\{\SO_2,\GSpin_2\}$, take $w'=\embedding(w'_0,{}^{\iota}(w'_0))\in H$ with a representative $w'_0$ for an element $\WeylElement'_0$ of $W_{\Orth_c}$ or
$W_{\GPin_c}$, such that
\begin{align*}
{}^{w'}\Es{a_R}=\Es{a_R},\qquad i_{M_{P'}}({}^{w'}\Es{t_R'})=(\diag(I_{kc-n},t,I_l),1).
\end{align*}

Define for $X<\GL_e$ with $e\geq1$, $X^0=\{x\in X:|\det{x}|\leq1\}$ and $X^1=\{x\in X:|\det x|>1\}$.
We then write the integral~\eqref{eq:int before split} as a sum of integrals $\mathcal{I}(i,j)$, $i,j\in\{0,1\}$, with
$\GL_l^i$ and $T_{\GL_{n-l}}^j$ replacing $\GL_l$ and $T_{\GL_{n-l}}$, but if $G'\notin\{\SO_2,\GSpin_2\}$ we only have $\mathcal{I}(0,0)$ and $\mathcal{I}(1,0)$.

To each $\mathcal{I}(i,j)$ we apply Lemma~\ref{corollary:inv of U on g giota} with $\widetilde{w}=w^i(w')^j\in H$. Even if $\WeylElement_0\notin W_G$ or $\WeylElement'_0\notin W_G$, Lemma~\ref{corollary:inv of U on g giota} is still applicable to $\widetilde{w}$ because $\widetilde{w}\in\mathrm{St}_{M_Q}(\psi_U)$ and $i_{M_P}({}^{\delta}\widetilde{w})$ belongs to the stabilizer of $\psi_k$ in $\GL_{kc}$ (this is not a coincidence, the classical doubling method was defined for $\Orth_c$, see \cite{PSR}). We obtain integrals of the form
\begin{align*}
\mathcal{I}(i,j)=
&\int\limits_{U_R}
\int\limits_{\GL_l^i}\int\limits_{K_{G'}}\int\limits_{T_{\GL_{n-l}}^j}\int\limits_{K_{G'}}
\int\limits_{U_R^-}\int\limits_{U_0}\\&\notag
\left|\delta_{R^-}^{-1/2}(m)\delta_{R^-}^{1/2}(m_z^{-1})\omega^{e_G}_{\varphi,\varphi^{\vee}}(m_z^{-1}m)f(s,\delta u_0{}^{\widetilde{w}}(\Es{ya_Rt_R'})\widetilde{w}\Es{k''}k'_z
)\right|\,du_0\,dy\,dk'\,dt\,dk''\,da\,dz.
\end{align*}
For $i=0$, $\widetilde{w}$ normalizes $\Es{U_R^-}$ and we proceed as in the proof of Proposition~\ref{proposition:integral over the open cell}.
That is, first conjugate $y\mapsto {}^{m}y$ (multiplying $dy$ by $\delta_{R^-}(m)$), conjugate $\delta_1u_0$ by ${}^{\widetilde{w}}(\Es{a_Rt_R'})$ on the right, then
conjugate by $\Levi{\Es{y}}$ on the right. If $i=1$, then ${}^{\widetilde{w}}\Es{U_R^-}=\Es{U_R}$ and again we continue as in that proposition (for $U_R$), conjugate ${}^{\widetilde{w}}\Es{y}$ first, then ${}^{\widetilde{w}}(\Es{a_Rt_R'})$. Define
\begin{align*}
\begin{cases}
\BigLevi(y,a,t)={}^{\delta_0\widetilde{w}}(\Es{a_Rt_R'}) {}^{\delta_0}\Levi{\Es{y}},\quad \BigUni(u_0,y,a,t)={}^{\Levi{\Es{y}}^{-1}}({}^{{}^{\widetilde{w}}(\Es{a_Rt_R'})^{-1}}(\delta_1u_0))\Uni{\Es{y}}&i=0,\\
\BigLevi(y,a,t)=i_{M_P}^{-1}(\LeviY,1)({}^{\delta_0\widetilde{w}}(\Es{a_Rt_R'})),\quad
\BigUni(u_0,y,a,t)={}^{{}^{\widetilde{w}}(\Es{a_Rt_R'})^{-1}}({}^{\Es{{}^{\widetilde{w}}y}^{-1}}(\delta_1u_0))
&i=1.
\end{cases}
\end{align*}
When $i=1$, the coordinates of $y$ in $\BigUni(u_0,y,a,t)$ do depend on $t_R'$ and we remove this dependency by a change of variables
which multiplies $dy$ by $|\det t|^{-1+2j}$ (for $\Sp_c$, with the notation \eqref{eq:z for Sp example} the change is to the coordinates of $\LeviY$ corresponding to $z_1$).
After this change, $\LeviY$ depends on $t$ but remains a unipotent element.

Observe that $i_{M_P}({}^{\delta_0}\Levi{\Es{y}})\in \diag(\GL_c,I_{k(c-1)})$ and
\begin{align*}
i_{M_P}({}^{\delta_0\widetilde{w}}(\Es{a_Rt_R'}))=(\diag([a]_i,[t]_j,I_{kc-n}),(\det{a})^{i}(\det{t})^{j}),
\end{align*}
where we set $[a]_0=a$, $[a]_1=a^*$ and similarly $[t]_0=t$, $[t]_1=t^*$. The upshot is that $|\det[a]_i|\leq1$ for both $i$, $|\det[t]_0|\leq1$
and if $G'\in\{\SO_2,\GSpin_2\}$ then we also have $j=1$ and $|\det[t]_1|\leq1$. With these modifications
\begin{align*}
\mathcal{I}(i,j)=
&\int\limits_{U_R}\int\limits_{\GL_l^i}\int\limits_{K_{G'}}\int\limits_{T_{\GL_{n-l}}^j}\int\limits_{K_{G'}}\int\limits_{U_R^-}\int\limits_{U_0}
|\det[a]_i|^{(-k+1)c}|\det[t]_j|^{(-k+1)c-i}\\&
\left|\delta_{R^-}^{\tfrac12-i}(m)\delta_{R^-}^{1/2}(m_z^{-1})
\omega^{e_G}_{\varphi,\varphi^{\vee}}(m_z^{-1}m)
f(s,\BigLevi(y,a,t)\delta_0\BigUni(u_0,y,a,t)\widetilde{w}\Es{k''}k'_z)\right|\,d(\cdots).
\end{align*}
(The factors $|\cdots|^{(-k+1)c}$ appear because of the conjugations of $u_0$, and $i$ in the exponents is either $0$ or $1$.)

Write $\delta_0\BigUni(u_0,y,a,t)$ according to the Iwasawa decomposition $H=PK_H$:
\begin{align*}
\delta_0\BigUni(u_0,y,a,t)=v_{u_0,y,a,t}i_{M_P}^{-1}(b_{u_0,y,a,t},1)k_{u_0,y,a,t},\quad v_{u_0,y,a,t}\in N_H,\, b_{u_0,y,a,t}\in T_{\GL_{kc}},\,k_{u_0,y,a,t}\in K_H.
\end{align*}
According to the bound \eqref{eq:conv main N 1} of Lemma~\ref{lemma:main N invariance},
\begin{align*}
&|f(s,\BigLevi(y,a,t)\delta_0\BigUni(u_0,y,a,t)\widetilde{w}\Es{k''}k'_z)|\leq \delta_P^{1/2}({}^{\delta_0\widetilde{w}}(\Es{a_Rt_R'})i_{M_P}^{-1}(b_{u_0,y,a,t},1))\\&\qquad\cdot|\det [a]_i\cdot\det [t]_j|^{s-1/2-d_0}|\chi_{\pi}(\det a)^{i}\chi_{\pi}(\det t)^{j}|\cdot|\det b_{u_0,y,a,t}|^{s-1/2}||b_{u_0,y,a,t}||^{D_1}\gamma(f).
\end{align*}
Here $||\cdot||$ is a norm on $\GL_{kc}$; $d_0$ is the constant of the lemma, $D_1$ is a constant which depends on $d_0$, and $\gamma$ is a continuous seminorm on $V(\tau,c)$. Thus there are constants $M_1,M_2,M_3\in\Z$ which depend on the groups (through modulus characters), on $\chi_{\pi}$ and on $i$ and $j$, such that $\mathcal{I}(i,j)$ is bounded by
\begin{align*}
\int\limits_{\cdots}
\left|\delta_{R^-}^{1/2}(m_z^{-1})
\omega^{e_G}_{\varphi,\varphi^{\vee}}(m_z^{-1}m)\right|
|\det [a]_i|^{s+M_1}|\det [t]_j|^{s+M_2}|\det b_{u_0,y,a,t}|^{s+M_3}
||b_{u_0,y,a,t}||^{D_1}\gamma(f)\,d(\cdots).
\end{align*}

To proceed consider the $dz$-integral:
\begin{align}\label{eq:dz integral}
&\int\limits_{U_R}|\delta_{R^-}^{1/2}(m_z^{-1})\omega^{e_G}_{\varphi,\varphi^{\vee}}(m_z^{-1}m)|\,dz=
\int\limits_{U_R}\left|\langle\varphi(e_G),\varphi^{\vee}(m_z^{-1}m)\rangle\right|\,dz.
\end{align}
Observing that $z^{-1}=y_z^{-1}m_z^{-1}k_z^{-1}$, the proof of Lemma~\ref{lemma:intconvergence} applied to $I^{\vee}$ (see \eqref{eq:Iphee}) implies that the r.h.s.~ of \eqref{eq:dz integral} is bounded by $\beta(\varphi(e_G))\gamma_0(\varphi^{\vee})||m||^{D_2}$, where $\beta$ and $\gamma_0$ are continuous seminorms on the spaces of $\varepsilon$ and $\Ind_{R^-}^G(\varepsilon^{\vee}\circ i_{M_R})$ (resp.) and $D_2$ is a constant depending only on $\pi^{\vee}$.

Identifying $\BigUni(u_0,y,a,t)$ with a matrix in $\Mat_{kc}$, we let $||\BigUni(u_0,y,a,t)||_E$ denote its Euclidian norm. Observe that, because of the aforementioned changes of variables in $y$ and $u_0$,
there are positive constants $C_1$ and $C_2$ such that
\begin{align}\label{volume bounds}
||u_0||_E\leq C_1||\BigUni(u_0,y,a,t)||_E,\quad ||y||\leq C_2||\BigUni(u_0,y,a,t)||_E,\qquad\forall a\in\GL_l,\, t\in T_{\GL_{n-l}}.
\end{align}
In fact the growth of $||\BigUni(u_0,y,a,t)||_E$ is, essentially, dominated by $||\BigUni(u_0,y,I_l,I_{\GL_{n-l}})||_E$ because of the conjugation by $\widetilde{w}$.
We use the standard bounds for $|\det{b_{u_0,y,a,t}}|$ and $||b_{u_0,y,a,t}||$, see Soudry \cite[\S~7.3, Lemma~3]{Soudry} (for $\SO_{2kc}$, similar bounds hold for the other groups):
\begin{align*}
&(1+||\BigUni(u_0,y,a,t)||_E^2)^{-kc/2}\leq|\det b_{u_0,y,a,t}|\leq(1+||\BigUni(u_0,y,a,t)||_E^2)^{-1/2},\\
&||b_{u_0,y,a,t}||\leq (1+||\BigUni(u_0,y,a,t)||_E^2)^{D_3},
\end{align*}
for a constant $D_3$. It remains to consider, for some constant $M$,
\begin{align*}
&\int\limits_{\GL_l^i}\int\limits_{T_{\GL_{n-l}}^j}\int\limits_{U_R^-}\int\limits_{U_0}
||a||^{D_2}||t||^{D_2}|\det[a]_i|^{s+M_1}|\det[t]_j|^{s+M_2}(1+||\BigUni(u_0,y,a,t)||^2_E)^{-\tfrac12\Real(s)+M}\,d(\cdots).
\end{align*}
The multiple integral converges in $\Real(s)\gg0$ (use \eqref{volume bounds}), and the continuity of \eqref{eq:integral 1.7} in $\varphi$, $\varphi^{\vee}$ and $f$ is immediate.
Turning to \eqref{eq:mult 2 start}, since we can write the $\dintegrallocal$-integral of \eqref{eq:mult 2 start} over $K_G\times K_G$, the convergence and continuity results for
\eqref{eq:mult 2 start} follow as well, with the seminorm $\beta(\varphi(e_G))$ replaced by a continuous seminorm $\gamma_1$ on the space of
$\Ind_{R}^G(\varepsilon\circ i_{M_R})$ such that
$\beta(\varphi(\dintegrallocalVarG_1))\leq\gamma_1(\varphi)$ for all $\varphi$ and $\dintegrallocalVarG_1\in K_G$.
\end{proof}

\begin{corollary}\label{corollary:main poles}
Assume that $\pi\in\IrrUnrForPoles(G)$ and $\tau\in\IrrGenUni(\GL_k)$ and keep the notation \eqref{LQpi}.
Then for any $s\in\C$, there exist data $\omega$ and $f\in V(\tau,c)$ such that $Z(s,\omega,f)/L(s,\sigma^{\vee}\times\chi_{\pi}\tau)$ is nonzero at $s$.
\end{corollary}
\begin{proof}
By Proposition~\ref{proposition:Pole Inner Z1} there are data $f\in V(\tau,c)$, $\varphi(e_G)$ and $\varphi^{\vee}(e_G)$ such that the integral $Z^1(s,\omega^{e_G}_{\varphi,\varphi^{\vee}},f)$ is absolutely convergent in $\Real(s)\gg0$ and (its meromorphic continuation) is nonzero at $s_0$ when divided by
$L(s,\sigma^{\vee}\times\chi_{\pi}\tau)$.

For any $\phi^-\in \SchwartzC(U_R^-)$ we can define $\varphi_{\phi^-}$ in the space of $\Ind_{R}^G(\varepsilon\circ i_{M_R})$, whose support is contained in $RU_R^-$ and restricts to $\phi^-$ on $U_R^-$, and such that $\varphi_{\phi^-}(e_G)=\varphi(e_G)$. Define $\varphi_{\phi}^{\vee}$ in the space of $\Ind_{R^-}^G(\varepsilon^{\vee}\circ i_{M_R})$
similarly, for $\phi\in \SchwartzC(U_R)$ and $\varphi^{\vee}(e_G)$.

Write the $\dintegrallocal$-integral of \eqref{eq:mult 2 start} over $U_R^-\times U_R$. For the data $(f,\varphi_{\phi^-},\varphi_{\phi}^{\vee})$ we obtain
\begin{align}\label{eq:integral 1.5222}
&\int\limits_{U_R^-}\int\limits_{U_R}\,
Z^1(s,\omega^{e_G}_{\varphi,\varphi^{\vee}},\Es{\dintegrallocalVarG_2}\embeddingL(\dintegrallocalVarG_1)\cdot f)\phi^-(\dintegrallocalVarG_1)\phi(\dintegrallocalVarG_2)\,d\dintegrallocalVarG_2\,d\dintegrallocalVarG_1.
\end{align}

In the non-archimedean case we can choose $\phi^-$ and $\phi$ for which $\Es{\dintegrallocalVarG_2}\embeddingL(\dintegrallocalVarG_1)\cdot f=f$
when $\phi^-(\dintegrallocalVarG_1)\phi(\dintegrallocalVarG_2)\ne0$. Thus \eqref{eq:integral 1.5222} is equal to $Z^1(s,\omega^{e_G}_{\varphi,\varphi^{\vee}},f)$,
and is also absolutely convergent as a $du_0\,dy\,dm\,dz\,\dintegrallocal$-integral in $\Real(s)\gg0$, justifying the passage from $Z(s,\omega_{\varphi_{\phi^-},\varphi_{\phi}^{\vee}},f)$ to \eqref{eq:mult 2 start}. The result follows.

In the archimedean case, by Lemma~\ref{Brooks convergence}, in $\Real(s)\gg0$ the integrals
$Z(s,\omega_{\varphi_{\phi^-},\varphi_{\phi}^{\vee}},f)$, \eqref{eq:mult 2 start} and \eqref{eq:integral 1.5222} are all absolutely convergent and equal. Now by \cite{DM} applied to the action of $\Es{U_R}\embeddingL(U_R^-)$ on $V(s,\tau,c)$, there is a finite set of Schwartz functions $\phi^-_d$ and $\phi_d$, and
meromorphic sections $f_d$ (see \cite[Appendix~A]{CFK2022} for the definition of meromorphic sections) such that
\begin{align*}
\sum_dZ(s,\omega_{\varphi_{\phi^-_d},\varphi_{\phi_d}^{\vee}},f_d)=Z^1(s,\omega^{e_G}_{\varphi,\varphi^{\vee}},f).
\end{align*}
As described by Gourevitch \cite[Appendix~A]{CFK2022}, we can apply \cite{DM} in a way such that when $f\in V(\tau,c)$, each $f_d$ is also in $V(\tau,c)$, i.e., entire.
The proof is complete.
\end{proof}
\begin{proof}[Proof of Theorem~\ref{theorem:archimedean producing poles}]
If $L(s,\pi\times\tau)$ is holomorphic at $s$, then the theorem follows immediately from the (stronger) statement of Theorem~\ref{theorem:all local props} \eqref{it:nonzero}.
Hence it is enough to consider the poles and then by Theorem~\ref{theorem:L for unr temp and unitary is holomorphic in half plane} we can assume that $\pi$ is not tempered.

By Theorem~\ref{theorem:all local props} \eqref{it:cont} and by Cauchy's formula, the leading term in the Laurent expansion of $Z(s,\omega,f)$ is continuous.
Hence in the statement of Theorem~\ref{theorem:archimedean producing poles} we may drop the condition that $f$ is $K_H$-finite.
Now write $\pi$ as the Langlands quotient of $\sigma\rtimes\pi'$ as in \eqref{LQpi}. The result follows immediately from Corollaries~\ref{corollary:L of pi vs L of sigma in half plane} and \ref{corollary:main poles}.
\end{proof}

It remains to address the convergence of the intertwining operator in the archimedean case. We state a general result.
Let $\mathcal{G}$ be a reductive group as in \S~\ref{the groups}, defined over $\R$ or $\C$. Fix a maximal compact subgroup $K_{\mathcal{G}}$ relative to $T_{\mathcal{G}}$, i.e.,
$K_{\mathcal{G}}$ and $T_{\mathcal{G}}$ are orthogonal with respect to the Killing form. Let $R<\mathcal{G}$ be a standard parabolic subgroup, $A_R$ be the identity component of $C_{M_R}$, $X(M_R)$ (resp., $X(A_R)$) denote the group of $F$-rational characters of $M_R$ (resp., $A_R$), $\mathfrak{a}=\Hom(X(A_R),\R)$, $\mathfrak{a}^*=X(A_R)\otimes_{\Z}\R$ and let $\mathfrak{a}^*_{\C}$ be the complexification of $\mathfrak{a}^*$. Let $H_{M_R}:M_R\to\mathfrak{a}$ be the standard map defined by $e^{\langle\chi,H_{M_R}(m)\rangle}=|\chi(m)|$ for all $\chi\in X(M_R)$. For any $\sigma\in\Irr(M_R)$ and $\nu\in\mathfrak{a}^*_{\C}$,
denote the space of the representation $\mathrm{Ind}_R^{\mathcal{G}}(\sigma\otimes e^{<\nu,H_{M_R}(\cdot)>})$ by $V_R^{\mathcal{G}}(\nu,\sigma)$.
\begin{lemma}\label{lemma:intconvergence}
Assume that $\sigma\in\Irr(M_R)$ is tempered, and fix the inner product $<,>_{\sigma}$ on the space $V_{\sigma}$ of $\sigma$.
Let $\nu\in\mathfrak{a}^*_{\C}$ be such that $\Real(\nu)$ belongs to the positive Weyl chamber of $\mathfrak{a}^*$ with respect to $R$. Then, for any $\varphi\in V_R^{\mathcal{G}}(\nu,\sigma)$ and $w\in V_{\sigma}$,
\begin{align}\label{conv}
\int\limits_{U_R^-}|<\varphi(u),w>_{\sigma}|\,du<\infty.
\end{align}
Moreover, the map $(\varphi,w)\mapsto\int_{U_R^-}<\varphi(u),w>_{\sigma}\,du$ is continuous.
\end{lemma}
\begin{proof}
We combine the bound of Sun \cite{Sun2009} on smooth matrix coefficients of tempered representations with the standard proof
of convergence from \cite{Wal88}. By \cite[Theorem~1.2]{Sun2009}, there is a continuous seminorm $\beta$ on $V_{\sigma}$ such that for all $v,w\in V_{\sigma}$ and $m\in M_R$,
\begin{align*}
|<\sigma(m)v,w>|\leq \beta(w)\beta(v)\Xi(m).
\end{align*}
Here $\Xi$ is the basic spherical function of Harish--Chandra on $M_R$ (see e.g., \cite[\S~3.6.1]{Wal88}).

To check \eqref{conv} we write $u=r_um_uk_u$ where $r_u\in U_R$, $m_u\in M_R$ and $k_u\in K_{\mathcal{G}}$ according to the Iwasawa decomposition $\mathcal{G}=RK_{\mathcal{G}}$.
Let $\rho_R$ be half the sum of roots in $U_R$. Since
\begin{align*}
\varphi(u)=e^{<\nu+\rho_R,H_{M_R}(m_u)>}\sigma(m_u)\varphi(k_u),
\end{align*}
it follows that
\begin{align*}
|<\varphi(u),w>|&\leq e^{<\Real(\nu)+\rho_R,H_{M_R}(m_u)>}\beta(w)\beta(\varphi(k_u))\,\Xi(m_u).
\end{align*}
Let $\gamma$ be a continuous seminorm on $V_R^{\mathcal{G}}(\nu,\sigma)$ such that for all $\varphi$ and $k\in K_{\mathcal{G}}$, $\beta(\varphi(k))\leq \gamma(\varphi)$. Then
\begin{align*}
\int\limits_{U_R^-}|<\varphi(u),w>|\,du\leq\beta(w)\gamma(\varphi)\int\limits_{U_R^-}e^{<\Real(\nu)+\rho_R,H_{M_R}(m_u)>}\Xi(m_u)\,du.
\end{align*}
The convergence of the r.h.s.~follows from the proof of \cite[Lemma~5.3.1]{Wal88}, and the continuity is now clear.
\end{proof}

\section{Global theory}\label{Local theory}
We begin by describing the Eisenstein series which appears in the global integral, and prove that it is holomorphic in $\Real(s)\geq0$
when the associated cuspidal representation is not self-dual up to a twist. See Theorem~\ref{theorem:series}. Then we recall the definition of the global integral (from \cite{CFK2022}), and use Theorem~\ref{theorem:series} and the local result Theorem~\ref{theorem:archimedean producing poles} to show that the global $L$-functions are entire,
under certain additional local conditions. See Theorem~\ref{theorem:twisting to obtain entire L function}.
Finally in \S~\ref{section:Constructing the lift} we construct a weak functorial transfer.

\subsection{Notation}\label{global groups and notation}
Let $F$ be a number field with a ring of adeles $\A$, and denote the set of infinite places of $F$ by $S_{\infty}$. For a place $\nu$ of $F$, $F_{\nu}$ denotes the completion of $F$ at $\nu$. By a cuspidal representation we mean an irreducible cuspidal automorphic representation. Let $\psi$ be a nontrivial additive character of $F\backslash\A$. In analogy with the local setting (see \S~\ref{local groups and notation}), for an irreducible automorphic representation $\pi$ of $G(\A)$, when $G=\GSpin_c$
we let $\chi_{\pi}$ be the pullback of $\pi$ by $e_0^{\vee}$ viewed as a Hecke character of $\A^*$, and for $G\ne\GSpin_c$ we fix $\chi_{\pi}=1$.
For a Hecke character $\eta$ of $\A^*$ and an irreducible automorphic representation $\sigma$ of $\GL_k(\A)$, let $\eta\sigma(g)=\eta(\det g)\sigma(g)$ be the twist of $\sigma$ by $\eta$. We denote the local component of $\pi$ at $\nu$ by $\pi_{\nu}$, and also $\pi_{\infty}=\otimes_{\nu\in S_{\infty}}\pi_{\nu}$.
We use similar notation for $\psi$, $\eta$ and $\sigma$.

\subsection{The Eisenstein series}\label{section:complete L functions for twists}
Let $\tau$ be a cuspidal representation of $\GL_k(\A)$. For $c\geq1$, let $\rho_c(\tau)$ be the generalized Speh representation of $\GL_{kc}(\A)$ associated with $\tau$ (\cite[\S~2.4]{Jac4}). Denote $H=\mathcal{G}_{2kc}$. Fix a maximal compact group $K_H=\prod_{\nu}K_{H_{\nu}}$ in ``good position" with respect to $T_H$ (\cite[\S~I.1.1, \S~I.1.4]{MW2}). Let $P$ be a standard Siegel parabolic subgroup of $H$. Thus, $M_P\cong\GL_{kc}\times\mathcal{G}_0$. When $H=\GSpin_{2kc}$ (then
$\mathcal{G}_0=\GL_1$), let $\theta$ be a unitary Hecke character of $\A^*$, which is understood to be trivial in the remaining part of the paper when $H\ne\GSpin_{2kc}$. Let $s$ be a complex parameter.

For an entire $K_H$-finite section $f$ of the space of $\Ind_{P(\A)}^{H(\A)}((\absdet^{s}\rho_c(\tau)\otimes\theta)\circ i_{M_P})$, define the Eisenstein series
\begin{align}\label{eq:Eisenstein series main}
E_{\tau,\theta,c}(h;s,f)=\sum\limits_{\gamma\in P(F)\backslash H(F)}f(s,\gamma h),\qquad h\in H(\A).
\end{align}
It is absolutely convergent in $\Real(s)\gg0$ and admits meromorphic continuation to $\C$ (\cite{La5,MW2}).

In this subsection we prove the following result.
\begin{theorem}\label{theorem:series}
Let $\tau$ be a unitary cuspidal representation of $\GL_k(\A)$ and let $\theta$ be a unitary Hecke character of $\A^*$ (when $H\ne\GSpin_{2kc}$, $\theta=1$). Assume
$\absdet^{it}\tau\not\cong\theta{\tau}^{\vee}$ for all $t\in\R$. Then
$E_{\tau,\theta,c}(h;s,f)$ is holomorphic in $\Real(s)\geq0$.
\end{theorem}

\begin{remark}
The Eisenstein series $E_{\tau,1,c}$ was studied in \cite{JiangLiuZhang2013} for quasi-split classical groups, in the self-dual case. According to \cite[Theorem~5.2]{JiangLiuZhang2013} the operator $E_{\tau,1,c}(h;s,f)$ is holomorphic in $\Real(s)\geq0$ except perhaps at a finite set $X_{\tau}$ of poles at half integers, explicitly given in \textit{loc. cit.}~ depending on $H$ and on the self-duality type of $\tau$. (The statement in \textit{loc. cit.}~ was formulated for the normalized Eisenstein series but the result is the same.) For example when $H=\SO_{2kc}$, $2|c$ and $L(s,\tau,\wedge^2)$ has a pole at $s=1$, $X_{\tau}=\{c/2,(c-2)/2,\ldots,1\}$. We expect an analogous statement to hold for
$H=\GSpin_{2kc}$, except that the self-duality type condition which determines the poles depends also on $\theta$, e.g., in the aforementioned example $L(s,\tau,\wedge^2)$ is replaced by $L(s,\tau,\wedge^2\otimes\theta^{-1})$. A key important ingredient for the proof of \cite[Theorem~5.2]{JiangLiuZhang2013} was Arthur's classification \cite{Arthur2013}, which we avoid using in this work (and does not yet include the case $\GSpin_{2kc}$). Thus we will not use \cite[Theorem~5.2]{JiangLiuZhang2013}, however, we will use a constant term computation from \cite{JiangLiuZhang2013} (see below) which is independent of \cite{Arthur2013}.
\end{remark}

We use the notation $\rtimes$ and $\times$ for parabolic induction as in the local case, see \S~\ref{local groups and notation}.
When $H\ne\Sp_{2kc}$ and $k$ is odd, we will need to distinguish between induction from $P$ and from ${}^{\Specialjmath}P$. For clarity, we ignore this case for the moment,
and at the end of the proof explain the modifications needed to handle it.

For $c=1$, the constant term of $E_{\tau,\theta,1}(h;s,f)$ along $P$ is given by
\begin{align}\label{eq:constant term c=1}
(E_{\tau,\theta,1})_P(h;s,f)=f(s,h)+M_0(s)f(s,h),
\end{align}
where $M_0(s)$ is the standard intertwining operator
\begin{align*}
&M_0(s):\absdet^{s}\tau\rtimes\theta\to\absdet^{-s}\theta\tau^{\vee}\rtimes\theta.
\end{align*}
By M{\oe}glin and Waldspurger \cite[Remark~IV.3.12]{MW2}, under the assumption
$\absdet^{it}\tau\not\cong\theta{\tau}^{\vee}$ for all $t\in\R$, the series $E_{\tau,\theta,1}(h;s,f)$ or equivalently $M_0(s)$ is holomorphic in $\Real(s)\geq0$ as required.

Henceforth assume $c\geq2$.

Fix $T_{\GL_{kc}}$ and $B_{\GL_{kc}}$ using $i_{M_P}$ as in \S~\ref{The local integral}. For a composition $\beta$ of $kc$,
denote by $P_{\beta}$ the standard parabolic subgroup of $H$ such that $M_{P_{\beta}}\cong M_{\beta}\times\mathcal{G}_{0}$ and $M_{P_{\beta}}<M_P$.

Observe that $M_{P_{(k^c)}}$ is the only standard Levi subgroup of $H$ contributing to the cuspidal support of $E_{\tau,\theta,c}(h;s,f)$.
Denote $c'=c-1$ and $H'=\mathcal{G}_{2kc'}$. Let $\ParabolicCT$ and $\ParabolicCT'$ be the standard maximal parabolic subgroups of $H$ such that
$M_{\ParabolicCT}\cong\GL_k\times H'$ and $M_{\ParabolicCT'}\cong\GL_{kc'}\times \mathcal{G}_{2k}$.
We use the computation of the constant term of the series along $\ParabolicCT$ from \cite{JiangLiuZhang2013}.
To describe the constant term we introduce the following notation. First, let $f_{P_{(k,kc')}}$ and $f_{P_{(kc',k)}}$ denote the constant terms of $f$ along the specified parabolic subgroups of $H$.
Consider the representations $\absdet^{s+1/2}\rho_{c'}(\tau)\rtimes\theta$ and $\absdet^{s+c'/2}\tau\rtimes\theta$ of $H'$ and $\mathcal{G}_{2k}$ (resp.), induced from
the projection of $i_{\ParabolicCT}(P_{(k,kc')}\cap i_{\ParabolicCT}^{-1}(I_k,H'))$ into $H'$ and of $i_{\ParabolicCT'}(P_{(kc',k)}\cap i_{\ParabolicCT'}^{-1}(I_{kc'},\mathcal{G}_{2k}))$ into $\mathcal{G}_{2k}$ (resp.). The function $f_{P_{(k,kc')}}$ belongs to the space of
\begin{align}\label{rep:image of the f1}
\absdet^{s-c'/2}\tau\rtimes(\absdet^{s+1/2}\rho_{c'}(\tau)\rtimes\theta),
\end{align}
and $f_{P_{(kc',k)}}$ belongs to the space of
\begin{align*}
\absdet^{s-1/2}\rho_{c'}(\tau)\rtimes(\absdet^{s+c'/2}\tau\rtimes\theta).
\end{align*}
Also consider the standard intertwining operators
\begin{align*}
&M_1(s):\absdet^{s+c'/2}\tau\rtimes\theta\to\absdet^{-s-c'/2}\theta\tau^{\vee}\rtimes\theta,\nonumber\\
&M_2(s):\absdet^{s-1/2}\rho_{c'}(\tau)\times\absdet^{-s-c'/2}\theta\tau^{\vee}\to
\absdet^{-s-c'/2}\theta\tau^{\vee}\times\absdet^{s-1/2}\rho_{c'}(\tau).
\end{align*}
Applying the parabolic induction functors we obtain the operators
\begin{align*}
&\intertInduced{1}{s}:\absdet^{s-1/2}\rho_{c'}(\tau)\rtimes(\absdet^{s+c'/2}\tau\rtimes\theta)\to \absdet^{s-1/2}\rho_{c'}(\tau)\rtimes(\absdet^{-s-c'/2}\theta\tau^{\vee}\rtimes\theta),\\
&\intertInduced{2}{s}:(\absdet^{s-1/2}\rho_{c'}(\tau)\times\absdet^{-s-c'/2}\theta\tau^{\vee})\rtimes\theta
\to (\absdet^{-s-c'/2}\theta\tau^{\vee}\times\absdet^{s-1/2}\rho_{c'}(\tau))\rtimes\theta.
\end{align*}
Then
\begin{align}\label{rep:image of the last M s}
\intertInduced{2}{s}\intertInduced{1}{s}:\absdet^{s-1/2}\rho_{c'}(\tau)\rtimes(\absdet^{s+c'/2}\tau\rtimes\theta)\to
\absdet^{-s-c'/2}\theta\tau^{\vee}\rtimes(\absdet^{s-1/2}\rho_{c'}(\tau)\rtimes\theta).
\end{align}

Now as in \cite[Proposition~2.3]{JiangLiuZhang2013},
\begin{align}\label{eq:inductive formula series JLZ}
(E_{\tau,\theta,c})_{\ParabolicCT}(s,f)=E_{\tau,\theta,c'}(s+1/2,f_{P_{(k,kc')}})+E_{\tau,\theta,c'}(s-1/2,\intertInduced{2}{s}\intertInduced{1}{s}f_{P_{(kc',k)}}).
\end{align}
Here in the first (resp., second) summand $E_{\tau,\theta,c'}$ is applied to a section of the space of \eqref{rep:image of the f1} (resp.,
the r.h.s.~ of \eqref{rep:image of the last M s}).

\begin{lemma}\label{lemma:hol of M1 and M2}
The operator $M_1(s)$ is holomorphic in $\Real(s)>-c'/2$ and $M_2(s)$ is
holomorphic in $\Real(s)\geq1/2$.
\end{lemma}
\begin{proof}
The statement regarding $M_1(s)$ follows from the case $c=1$ noting that $M_1(s)=M_0(s+c'/2)$.
Regarding $M_2(s)$, according to the local results \cite[\S~I.10]{MW4} the operator
\begin{align*}
\frac{L(2s+c',\tau\times\theta^{-1}\tau)}{
L(2s,\tau\times\theta^{-1}\tau)}M_2(s)
\end{align*}
is holomorphic when $\Real(s-1/2)>\Real(-s-c'/2)$, i.e., $\Real(s)>\tfrac14(1-c')$. By \cite{JS2,JS1,Sh4},
$L(2s+c',\tau\times\theta^{-1}\tau)$ is holomorphic when $\Real(2s+c')>1$, hence in $\Real(s)>0$, and $L(2s,\tau\times\theta^{-1}\tau)$ is nonvanishing when
$\Real(2s)\geq1$, i.e., in $\Real(s)\geq1/2$. Therefore $M_2(s)$ is holomorphic in $\Real(s)\geq1/2$.
\end{proof}

For any $s_0\in\C$, let $\LeadingCoeff_{s_0,c}$ be the leading coefficient of the Laurent expansion of $E_{\tau,\theta,c}(s,f)$ about $s=s_0$.
It is a nonzero automorphic form on $H(\A)$. Let $e(s,c)$ be the set of $\Real(\Xi)$ where $\Xi$ varies over the cuspidal exponents of $\LeadingCoeff_{s,c}$ along $P_{(k^c)}$ (see \cite[\S~I.3.3, \S~I.3.5]{MW2}). Recall that $M_{P_{(k^c)}}\cong M_{(k^c)}(=\GL_k\times\ldots\times\GL_k)$ when $H\ne\GSpin_{2kc}$, and
$M_{P_{(k^c)}}\cong M_{(k^c)}\times\GL_1$ otherwise. The elements of $e(s,c)$ can be written using the standard basis for the character lattice of $M_{P_{(k^c)}}$, identified
with $\Z^c$ (in the obvious way) if $H\ne\GSpin_{2kc}$ and with $\Z^{c+1}$ when $H=\GSpin_{2kc}$. In the latter case, because $\theta$ is unitary,
the $(c+1)$-th coordinate is $0$, therefore we view $e(s,c)$ as a subset of $\R^c$ in all cases.
\begin{lemma}\label{lemma:exponents}
Any $e\in e(s,c)$ can be written in the form
\begin{align}\label{eq:form of e}
e=\Real(s)(a_1,\ldots,a_c)+(b_1,\ldots,b_c),
\end{align}
where for each $1\leq j\leq c$, $a_j=\pm1$ and $\sum_{i=1}^jb_i\leq0$.
\end{lemma}
\begin{proof}
It is enough to prove the lemma for a generic $s$.
We argue using induction on $c$. For $c=1$, by \eqref{eq:constant term c=1},
$e(s,1)\subset\{-\Real(s),\Real(s)\}$ and the lemma holds ($a_1=\pm1$, $b_1=0$).

Assume the result for $c'$ and consider $c$.
Then by \eqref{eq:inductive formula series JLZ}, \eqref{rep:image of the f1} and \eqref{rep:image of the last M s},
\begin{align*}
e(s,c)\subset\{(\Real(s)-c'/2,e'):e'\in e(s+1/2,c')\}\bigcup \{(-\Real(s)-c'/2,e'):e'\in e(s-1/2,c')\}.
\end{align*}
The proof follows from this by an elementary induction.
\end{proof}

\begin{proof}[Proof of Theorem~\ref{theorem:series}]
By \cite[Theorem~VI.2.1(i)]{MW2}, $E_{\tau,\theta,c}(s,f)$ is holomorphic on $\Real(s)=0$.
Assume $s_0$ is a pole of $E_{\tau,\theta,c}(s,f)$ with $\Real(s_0)>0$. By \cite[Lemma~I.4.10]{MW2},
since $M_{P_{(k^c)}}$ is the only standard Levi subgroup of $H$ contributing to the cuspidal support of $E_{\tau,\theta,c}(s,f)$,
$s_0$ is a pole of $(E_{\tau,\theta,c})_{P_{(k^c)}}(s,f)$ and thereby of $(E_{\tau,\theta,c})_{\ParabolicCT}(s,f)$.

By the induction hypothesis $E_{\tau,\theta,c'}(s+1/2,f_{P_{(k,kc')}})$ is holomorphic at $s_0$. Hence by \eqref{eq:inductive formula series JLZ}, $E_{\tau,\theta,c'}(s-1/2,\intertInduced{2}{s}\intertInduced{1}{s}f_{P_{(kc',k)}})$ has a pole at $s_0$, and by Lemma~\ref{lemma:hol of M1 and M2} and the induction hypothesis again, $\Real(s_0)<1/2$.

Denote $\LeadingCoeff=\LeadingCoeff_{s_0,c}$. As in the proof of \cite[Theorem~6.1]{JiangLiuZhang2013}, we show that $\LeadingCoeff$ is square-integrable using the criterion on the negativity of (the real parts of) the cuspidal exponents from \cite[\S~I.4.11]{MW2}.

We need to show that for any $e=(e_1,\ldots,e_c)\in e(s_0,c)$, i.e., the real part of a cuspidal exponent of $\LeadingCoeff$ along $P_{(k^c)}$, $\sum_{i=1}^j\Real(e_i)<0$ for all $1\leq j\leq c$. By \eqref{eq:inductive formula series JLZ} and since $E_{\tau,\theta,c'}(s+1/2,f_{P_{(k,kc')}})$ is holomorphic at $s_0$, when we consider
the image of \eqref{rep:image of the last M s} we deduce $e=(-\Real(s_0)-c'/2,e')$ where
$e'\in e(s_0-1/2,c')$. The elements of $e(s_0-1/2,c')$, that is, the real parts of cuspidal exponents of $\LeadingCoeff_{s_0-1/2,c'}$ along $P_{(k^{c'})}$,
 are described by Lemma~\ref{lemma:exponents} and we conclude that
\begin{align*}
e&=(-\Real(s_0)-c'/2,0,\ldots,0)+(\Real(s_0)-1/2)(0,a_2,\ldots,a_c)+(0,b_2,\ldots,b_c) \\
&=(\Real(s_0)-1/2)(-1,a_2,\ldots,a_c)+(-c/2,b_2,\ldots,b_c),
\end{align*}
where $a_i=\pm1$ for each $i$ and $\sum_{i=2}^jb_i\leq 0$ for all $2\leq j\leq c$.
It remains to show that
\begin{align}\label{eq:identity for exponents}
\Real(1/2-s_0-c/2+\sum_{i=2}^{j}(a_i(s_0-1/2)+b_i))<0,\qquad\forall 1\leq j\leq c.
\end{align}
(By definition $\sum_{i=2}^{1}=0$.) Since $\Real(s_0)<1/2$, the l.h.s.~ is maximized when $a_2=\ldots=a_{j}=-1$, and using $\sum_{i=2}^jb_i\leq 0$,
the l.h.s.~ is bounded by $-j\Real(s_0)$ (for each $j\geq1$). Then because $\Real(s_0)>0$, we deduce \eqref{eq:identity for exponents}. We conclude that $\LeadingCoeff$ is square-integrable.

Let $W:=W(M_{P_{(k^c)}})$ denote the set of $\WeylElement\in W_H$ of minimal length in $\WeylElement W_{M_{P_{(k^c)}}}$ such that ${}^{\WeylElement}M_{P_{(k^c)}}$ is a standard Levi subgroup of $H$ (\cite[\S~I.1.7]{MW2}). In fact in our case $W$ normalizes $M_{P_{(k^c)}}$. Note that
$W$ can be identified with the set of signed permutations on $\{1,\ldots,c\}$. The cuspidal support of $\LeadingCoeff$ is contained in the $W$-orbit $\orb$ of
\begin{align*}
\Dpt=\absdet^{s_0+c'/2}\tau\otimes\ldots\otimes\absdet^{s_0-c'/2}\tau\otimes\theta.
\end{align*}
By the description of the residual spectrum \cite[Theorem~V.3.13(iii), Corollary~V.3.16]{MW2},
$\LeadingCoeff$ belongs to the space $\mathcal{L}:=L^2(H(F)\backslash H(\A))_{\orb}$ defined in
\cite[V.3.14]{MW2}. In particular $\mathcal{L}\ne0$. It follows from the definition of
$\mathcal{L}$ and \cite[Remark~V.3.6(b)]{MW2} that the Hermitian dual $\Dpt^*$ of $\Dpt$ (the conjugate of the contragredient) belongs to $\orb$,
i.e., $\Dpt^*=\WeylElement\Dpt$ for some $\WeylElement\in W$. In particular $\WeylElement\lambda_0=-\lambda_0$ where
\begin{align*}
\lambda_0=(\Real(s_0)+c'/2,\dots,\Real(s_0)-c'/2)\in\R^c
\end{align*}
is the ``real part" of $\Dpt$. Since $\Real(s_0)\not\in\frac12\Z$, $\lambda_0$ is regular and we infer that $\WeylElement=\WeylElement_H\WeylElement_{M_{P_{(k^c)}}}$. It follows that
$\absdet^{s_0-\overline{s_0}}\tau\cong\theta\tau^\vee$, which contradicts the hypothesis of Theorem~\ref{theorem:series}.
\end{proof}

As indicated above, the cases of $H=\SO_{2kc}$ and $\GSpin_{2kc}$, when $k$ is odd, are a bit different.
We explain this now. Suppose in addition that $k>1$. The standard Levi subgroups of $H$ contributing to the cuspidal support of $E_{\tau,\theta,c}(h;s,f)$ are $M_{P_{(k^c)}}$ and ${}^{\jmath}M_{P_{(k^c)}}$.
The computation of the constant term is slightly more complicated because we need to distinguish between $P$ and ${}^{\jmath}P$. To this end we use the notation $\rtimes_{\Specialjmath^r}$ to define the induction from ${}^{\Specialjmath^r}P$, where $r\in\{0,1\}$. We can then consider the Eisenstein series $E_{\tau,\theta,c}^{\Specialjmath}(h;s,f')$
similar to \eqref{eq:Eisenstein series main} but with respect to ${}^{\Specialjmath}P$, for a section $f'$ of the space of $\absdet^{s}\rho_c(\tau)\rtimes_{\Specialjmath}\theta$.
For $c=1$,
\begin{align*}
(E_{\tau,\theta,1})_{{}^{\Specialjmath^{r}}P}(h;s,f)=
\begin{dcases}
f(s,h) & r=0,\\
M_0(s)f(s,h) & r=1.
\end{dcases}
\end{align*}
Here
\begin{align}\label{eq:twisted M0}
&M_0(s):\absdet^{s}\tau\rtimes\theta\to\absdet^{-s}\theta\tau^{\vee}\rtimes_{\Specialjmath}\theta.
\end{align}
Similar results hold for $(E_{\tau,\theta,1}^{\Specialjmath})_{{}^{\Specialjmath^{r}}P}(h;s,f')$.

We describe the analog of \eqref{eq:inductive formula series JLZ}. For the formula for $(E_{\tau,\theta,c})_{\ParabolicCT}$, let
$M_1(s)=M_0(s+c'/2)$ with $M_0(s)$ now given by \eqref{eq:twisted M0}, define $\intertInduced{1}{s}$ by applying the induction functor to
$M_1(s)$, and $\intertInduced{2}{s}$ is defined as above. Then we have
\begin{align}\label{eq:inductive formula series JLZ odd k case 1}
(E_{\tau,\theta,c})_{\ParabolicCT}(s,f)=E_{\tau,\theta,c'}(s+1/2,f_{P_{(k,kc')}})+E_{\tau,\theta,c'}^{\Specialjmath}(s-1/2,{}^{\Specialjmath}\intertInduced{2}{s}\intertInduced{1}{s}f_{P_{(kc',k)}}).
\end{align}
A similar formula holds for $(E_{\tau,\theta,c}^{\Specialjmath})_{\ParabolicCT}(s,f')$, with the intertwining operators properly adjusted.

Now we can compute cuspidal exponents along either $P_{(k^c)}$ or ${}^{\jmath}P_{(k^c)}$ (both contribute), and the result of Lemma~\ref{lemma:exponents} applies
equally well to both. The proof of Theorem~\ref{theorem:series} follows along the same lines, but note that
each $\WeylElement\in W(M_{P_{(k^c)}})$ either normalizes $M_{P_{(k^c)}}$ or ${}^{\WeylElement}M_{P_{(k^c)}}={}^{\Specialjmath}M_{P_{(k^c)}}$.

The proof for $k=1$ ($H=\SO_{2c}$ or $\GSpin_{2c}$) is similar and simpler: in identity~\eqref{eq:inductive formula series JLZ odd k case 1} $\intertInduced{1}{s}$ is omitted, and $P_{(k^c)}=B_H$ which is normalized by $\jmath$. See also Kudla and Rallis \cite{KudlaRallis1989} and \cite[Theorem~1.1]{KudlaRallis1994} (for $\Orth_{2c}$).

\subsection{The integral}\label{The gbl integral}
Let $c>1$ and $k\geq1$ be integers, $G=\mathcal{G}_c$ and $H=\mathcal{G}_{2kc}$. As in
\S~\ref{The local integral} we have the subgroups $P$ and $Q$, $U=U_Q$, and we define the character $\psi_U$,
now of $U(F)\backslash U(\A)$, which is generic with respect to the unipotent orbit $((2k-1)^c1^c)$. Let
$\mathrm{St}_{M_Q(\A)}(\psi_U)$ be the stabilizer of $\psi_U$ in $M_Q(\A)$. The character $\psi_U$ can be chosen such that
there is a map $\embedding:G(\A)\times G(\A)\rightarrow\mathrm{St}_{M_Q(\A)}(\psi_U)$ whose kernel is the diagonal embedding of $C_G^{\circforgspin}(\A)$. We let $K_H$ be the maximal compact subgroup as in \S~\ref{section:complete L functions for twists} with the additional requirement that for all places $\nu$ of $F$, $\embedding_{\nu}(K_{G_{\nu}},K_{G_{\nu}})<K_{H_{\nu}}$.

Let $\pi$ and $\tau$ be cuspidal representations of $G(\A)$ and $\GL_k(\A)$.
Consider the representation $\Ind_{P(\A)}^{H(\A)}((\absdet^{s}\chi_{\pi}\rho_c(\tau)\otimes\chi_{\pi})\circ i_{M_P})$ and the Eisenstein series
$E(h;s,f)=E_{\chi_{\pi}\tau,\chi_{\pi},c}(h;s,f)$ given by \eqref{eq:Eisenstein series main} (with $\theta=\chi_{\pi}$), defined for an entire $K_H$-finite section $f$.

We form the Fourier coefficient of $E(h;s,f)$ along $(U,\psi_U)$:
\begin{align*}
E^{U,\psi_U}(h;s,f)=\int\limits_{U(F)\backslash U(\A)}E(uh;s,f)\psi_U(u)\,du,\qquad h\in H(\A).
\end{align*}
This is an automorphic function on $G(\A)\times G(\A)$.
For cusp forms $\varphi_1$ and $\varphi_2$ in the spaces of $\pi$ and $\pi^{\vee}$, resp., define
\begin{align*}
&Z(s,\varphi_1,\varphi_2,f)\\&=\int\limits_{(C_G^{\circforgspin}(\A)G(F)\times C_G^{\circforgspin}(\A)G(F))\backslash G({\A})\times G({\A})}
\varphi_1(g_1)\,{}^{\iota}\varphi_2(g_2)\,
E^{U,\psi_U}(\embedding(g_1,g_2);s-1/2,f)\,dg_1\,dg_2.
\end{align*}
Here ${}^{\iota}\varphi_2(g)=\varphi_2(\iota g\iota^{-1})$ ($\iota=\prod_{\nu}\iota_{\nu}$, $\iota_{\nu}$ was defined in \S~\ref{The local integral}).
\begin{remark}
We consider the Eisenstein series at $s-1/2$, so that $Z(s,\varphi_1,\varphi_2,f)$ will represent the $L$-function at $s$ (instead of $s+1/2$).
\end{remark}
The integral $Z(s,\varphi_1,\varphi_2,f)$ is absolutely convergent for all $s$ where the Eisenstein series is holomorphic. This follows from the rapid decay of the cusp forms and moderate growth of the
series. Thus the integral admits meromorphic continuation to $\C$, which is analytic except perhaps at the poles of the Eisenstein series. According to the unfolding argument of \cite[\S~3.2]{DimaKaplan} (a preliminary version was sketched in \cite{CFGK2}), for decomposable data $(\varphi_1,\varphi_2,f)$, in $\Real(s)\gg0$,
\begin{align*}
Z(s,\varphi_1,\varphi_2,f)=\prod_{\nu}Z(s,\omega_{\nu},f_{\nu})
\end{align*}
where $\omega_{\nu}$ is a matrix coefficient of $\pi_{\nu}^{\vee}$ and $f_{\nu}\in V(\tau_{\nu},c)$. Specifically, $f_{\nu}$ is
an entire $K_{H_{\nu}}$-finite section corresponding to $\Ind_{P(F_{\nu})}^{H(F_{\nu})}((\absdet_{\nu}^{s-1/2}\chi_{\pi_{\nu}}W_{\psi_{\nu}}(\rho_c(\tau_{\nu}))\otimes
\chi_{\pi_{\nu}})\circ i_{M_P})$.
Thus, for a sufficiently large finite set $S$ of places of $F$, by \eqref{int:unr} we have the following identity of meromorphic functions:
\begin{align}\label{eq:consequence of crude functional equation}
b^{S}(s,c,\tau\otimes\chi_{\pi})Z(s,\varphi_1,\varphi_2,f)=L^{S}(s,\pi\times\tau)\prod_{\nu\in S}Z(s,\omega_{\nu},f_{\nu}).
\end{align}
Here $b^{S}(s,c,\tau\otimes\chi_{\pi})=\prod_{\nu\not\in S}b(s,c,\tau_{\nu}\otimes\chi_{\pi_{\nu}})$.

Define the completed $L$-function $L(s,\pi\times\tau)=\prod_{\nu}L(s,\pi_{\nu}\times\tau_{\nu})$, which is a meromorphic function, and also define
$\epsilon(s,\pi\times\tau)=\prod_{\nu}\epsilon(s,\pi_{\nu}\times\tau_{\nu},\psi_{\nu})$. The following global results were proved in \cite{CFK2022}.
\begin{theorem}\cite[Theorem~8.3]{CFK2022}\label{theorem:global properties of L function}
$L(s,\pi\times\tau)=\epsilon(s,\pi\times\tau)L(1-s,\pi^{\vee}\times\tau^{\vee})$.
\end{theorem}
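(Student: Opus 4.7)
The plan is to derive the complete functional equation by upgrading the crude functional equation \eqref{eq:global property} for the partial $L$-function, using the local functional equation \eqref{eq:final local functional equation} to absorb the $\gamma$-factors at the ramified places into $L$- and $\epsilon$-factors. First, I would fix a finite set $S$ of places of $F$ containing all archimedean places and all places where $\pi$, $\tau$ or $\psi$ are ramified, so that outside $S$ every datum is unramified. Applying Theorem~\ref{theorem:ten commendments}\eqref{eq:global property} one obtains
\begin{align*}
L^S(s,\pi\times\tau)=\prod_{\nu\in S}\gamma(s,\pi_\nu\times\tau_\nu,\psi_\nu)\,L^S(1-s,\pi^\vee\times\tau^\vee).
\end{align*}

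Next, I would substitute \eqref{eq:final local functional equation}, namely
\begin{align*}
\gamma(s,\pi_\nu\times\tau_\nu,\psi_\nu)=\epsilon(s,\pi_\nu\times\tau_\nu,\psi_\nu)\,\frac{L(1-s,\pi_\nu^\vee\times\tau_\nu^\vee)}{L(s,\pi_\nu\times\tau_\nu)},
\end{align*}
for each $\nu\in S$, then multiply both sides of the crude functional equation by $\prod_{\nu\in S}L(s,\pi_\nu\times\tau_\nu)$. On the left this assembles with $L^S(s,\pi\times\tau)$ into the complete $L$-function $L(s,\pi\times\tau)$, and on the right the factors $L(1-s,\pi_\nu^\vee\times\tau_\nu^\vee)$ combine with $L^S(1-s,\pi^\vee\times\tau^\vee)$ into $L(1-s,\pi^\vee\times\tau^\vee)$. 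This yields
\begin{align*}
L(s,\pi\times\tau)=\prod_{\nu\in S}\epsilon(s,\pi_\nu\times\tau_\nu,\psi_\nu)\,L(1-s,\pi^\vee\times\tau^\vee).
\end{align*}

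To finish, I would invoke the observation recorded in \S~\ref{L and epsilon factors}: at every place $\nu\notin S$, where $\pi_\nu$, $\tau_\nu$ and $\psi_\nu$ are all unramified, the local $\epsilon$-factor $\epsilon(s,\pi_\nu\times\tau_\nu,\psi_\nu)$ equals $1$ (this follows from \eqref{eq:unramified factors} together with the definition of $\epsilon$ in terms of $\gamma$ and $L$). Consequently the finite product $\prod_{\nu\in S}\epsilon(s,\pi_\nu\times\tau_\nu,\psi_\nu)$ coincides with the Euler product $\prod_\nu\epsilon(s,\pi_\nu\times\tau_\nu,\psi_\nu)=\epsilon(s,\pi\times\tau)$, and well-definedness of this product (its independence of $S$ and of $\psi$) follows from the global triviality of the character appearing in \eqref{eq:dependence on psi} applied to any $b\in F^*$.

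The proof is essentially a bookkeeping exercise with no serious obstacle, since all the analytic content—existence of meromorphic continuation, the crude functional equation, and the local factorization of $\gamma$ into $\epsilon$ and $L$—has already been established in Theorem~\ref{theorem:ten commendments}, Corollary~\ref{corollary:meromorphic cont of compelte L}, and \S~\ref{L and epsilon factors}. The only mild subtlety worth verifying carefully is that $\epsilon(s,\pi_\nu\times\tau_\nu,\psi_\nu)=1$ at every $\nu\notin S$ in the case $G=\GSpin_c$, where one must check that the unramified twist by $\chi_\pi$ does not disturb triviality; this is immediate from the explicit formula for $L(s,\pi\times\tau)$ at unramified places in Lemma~\ref{lemma:pi and Pi for unramified pi} and the fact that all local Satake parameters are unitary up to the prescribed shift.
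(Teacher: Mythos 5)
Your proof is correct and takes essentially the same approach as the paper, whose proof consists of the one-line citation of \eqref{eq:global property} together with \eqref{eq:final local functional equation}; you have simply spelled out the bookkeeping, including the triviality of the unramified $\epsilon$-factors (established in \S~\ref{L and epsilon factors}) and the independence of the global $\epsilon$-factor from $\psi$ (noted just before the theorem).
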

\begin{theorem}\cite[Corollary~8.5]{CFK2022}\label{corollary:BVS for entire}
If $L(s,\pi\times\tau)$ and $L(1-s,\pi^{\vee}\times\tau^{\vee})$ are entire, then they are bounded in vertical strips of finite width.
\end{theorem}

The next result will be used in \S~\ref{section:Constructing the lift}, together with Theorem~\ref{theorem:global properties of L function}, to
verify the conditions of the Converse Theorem. The proof makes critical use of Theorem~\ref{theorem:series} and of the local result Theorem~\ref{theorem:archimedean producing poles}.

\begin{theorem}\label{theorem:twisting to obtain entire L function}
Assume that $\pi$ and $\tau$ are unitary cuspidal representations of $G(\A)$ and $\GL_k(\A)$, resp., and suppose the following conditions hold.
\begin{enumerate}[leftmargin=*]
\item\label{it:not self-dual} For any $t\in\R$, $\absdet^{it}\chi_{\pi}\tau\not\cong{\tau}^{\vee}$.
\item\label{it:local good} For each place $\nu<\infty$ such that $\pi_{\nu}$ is not unramified,
$L(s,\pi_{\nu}\times\tau_{\nu})=L(s,\pi_{\nu}^{\vee}\times\tau_{\nu}^{\vee})=1$.
\end{enumerate}
Then $L(s,\pi\times\tau)$ and $L(s,\pi^{\vee}\times\tau^{\vee})$ are entire and bounded in vertical strips of finite width.
\end{theorem}

\begin{proof}
Let $S$ be a finite set of places, which contains $S_{\infty}$, and such that for every $\nu\notin S$ all data are unramified (e.g., $\pi_{\nu}$, $\tau_{\nu}$ and $\psi_{\nu}$).
Under assumption~\eqref{it:not self-dual}, the partial $L$-functions $L^S(s,\tau,\wedge^2\otimes\chi_{\pi})$ and $L^S(s,\tau,\vee^2\otimes\chi_{\pi})$
are holomorphic in $\Real(s)\geq0$ by the result \cite[Remark~IV.3.12]{MW2} on the intertwining operator and a standard argument (see e.g., \cite[(1.3)]{Kim2000}). In addition since $c\geq2$, $L(s+c/2,\tau)$ is holomorphic in $\Real(s)\geq1/2$ even without assumption~\eqref{it:not self-dual}, by \cite{JS1}. Thus by \eqref{eq:b}, $b^S(s,c,\tau\otimes\chi_{\pi})$ is holomorphic in $\Real(s)\geq1/2$.~\footnote{In fact $b^S(s,c,\tau\otimes\chi_{\pi})$ is entire by Kim and Shahidi \cite[Proposition~2.1]{KimShahidi2002}, for which assumption~\eqref{it:not self-dual} is seen to be sufficient because in the notation of \textit{ibid.}~ $m=1$ and $r=r_1$.}

Multiplying and dividing the r.h.s.~ of \eqref{eq:consequence of crude functional equation} by
$\prod_{\nu\in S}L(s,\pi_{\nu}\times\tau_{\nu})$, we obtain
\begin{align}\label{eq:global integral computation with b on lhs 2}
b^S(s,c,\tau\otimes\chi_{\pi})Z(s,\varphi_1,\varphi_2,f)=L(s,\pi\times\tau)Q(s),
\end{align}
where $Q(s)=\prod_{\nu\in S}\frac{Z(s,\omega_{\nu},f_{\nu})}{L(s,\pi_{\nu}\times\tau_{\nu})}$.

Now consider $s_0\in\C$ with $\Real(s_0)\geq1/2$.
On the one hand, by Theorem~\ref{theorem:archimedean producing poles} and Theorem~\ref{theorem:all local props} \eqref{it:nonzero}, we can choose for each $\nu\in S$
a matrix coefficient $\omega_{\nu}$ of $\pi^{\vee}_{\nu}$ and a $K_{H_{\nu}}$-finite $f_{\nu}\in V(\tau_{\nu},c)$ (an entire section) such that $Q(s_0)\ne0$.
Observe that for the places $\nu\in S-S_{\infty}$ where $\pi_{\nu}$ is not unramified, we only need to apply Theorem~\ref{theorem:all local props} \eqref{it:nonzero},
by assumption~\eqref{it:local good}.

On the other hand, take $\omega$ and $f$ that equal $\omega_{\nu}$ and $f_{\nu}$ at each $\nu\in S$, and equal the normalized unramified vectors outside $S$, resp.
Then $\omega$ and $f$ are factorizable and $f$ is entire and $K_H$-finite. Now by Theorem~\ref{theorem:series}, which is applicable by assumption~\eqref{it:not self-dual},
the l.h.s.~ of \eqref{eq:global integral computation with b on lhs 2} is holomorphic at $s_0$,
hence $L(s,\pi\times\tau)$ is holomorphic at $s_0$. The same argument shows that $L(s,\pi^{\vee}\times\tau^{\vee})$ is holomorphic in $\Real(s)\geq1/2$, and by
Theorem~\ref{theorem:global properties of L function} they are both entire. The remaining part of the statement now follows from
Theorem~\ref{corollary:BVS for entire}.
\end{proof}

\subsection{Constructing a weak functorial transfer}\label{section:Constructing the lift}
We can now prove the main result of the paper.
\begin{proof}[Proof of Theorem~\ref{theo:globl functorial lift}]
Let $\pi$ be a cuspidal representation of $G(\A)$. For the proof we can assume
$\pi$ is unitary. We construct a transfer following the prescription in \cite{CKPS}.

Write $\pi=\otimes'_{\nu}\pi_{\nu}$ as a restricted tensor product. Fix a nonempty finite set $S_{\pi}$ of finite places such that for each $\nu\notin S_{\pi}\coprod S_{\infty}$, $\pi_{\nu}$ is unramified. Define representations $\Pi_{\nu}$ of $\GL_N(F_{\nu})$ as follows:
\begin{itemize}[leftmargin=*]
\item For each $\nu\notin S_{\pi}\coprod S_{\infty}$, $\Pi_{\nu}=\transfer_{\nu}(\pi_{\nu})$ is the unramified transfer (see
\S~\ref{local transfer results}).
\item For the places $\nu\in S_{\infty}$, $\Pi_{\nu}=\transfer_{\nu}(\pi_{\nu})$ is the archimedean functorial transfer (see \S~\ref{local transfer results}).
\item For each $\nu\in S_{\pi}$, $\Pi_{\nu}$ is an arbitrary irreducible representation whose central character is $\chi_{\pi_{\nu}}^{N/2}$.
\end{itemize}
The central character of $\Pi_{\nu}$ is $\chi_{\pi_{\nu}}^{N/2}$ at all places; for $\nu\not\in S_{\pi}$ this follows from \S~\ref{local transfer results}.

Let $\Pi=\otimes'_{\nu}\Pi_{\nu}$, which is an irreducible admissible representation of $\GL_N(\A)$.

We verify the conditions of the Converse Theorem -- Theorem~\ref{theorem:cnv}.

By construction, the central character of $\Pi$ is $\chi_{\pi}^{N/2}$ which is trivial on $F^*$ (even on $\A^*$ when $G\ne\GSpin_c$).
The Euler product
$L(s,\Pi)=\prod_{\nu}L(s,\Pi_{\nu})$ is absolutely convergent in some right half plane, because if $S=S_{\pi}\coprod S_{\infty}$, then $L^S(s,\Pi)=L^S(s,\pi)$ is absolutely convergent in $\Real(s)\gg0$.

Let $\eta$ be a unitary Hecke character of $\A^*$ which is sufficiently highly ramified at all $\nu\in S_{\pi}$, depending
on the representations $\pi_{\nu}$ and $\Pi_{\nu}$;
and such that in particular for some $\nu_0\in S_{\pi}$, $\eta_{\nu_0}^2\chi_{\pi_{\nu_0}}$ is nontrivial on $\mathcal{O}_{\nu_0}^*$.

Let $\tau$ belong to the set $\mathscr{A}(S_{\pi},\eta)$ of cuspidal representations defined in \S~\ref{The converse theorem}. In order to check the remaining conditions of Theorem~\ref{theorem:cnv} for $L(s,\Pi\times\tau)$ and $L(s,\Pi^{\vee}\times\tau^{\vee})$,
observe that
\begin{align*}
&L(s,\Pi\times\tau)=L(s,\pi\times\tau),
\quad\epsilon(s,\Pi\times\tau)=\epsilon(s,\pi\times\tau),
\quad L(s,\Pi^{\vee}\times\tau^{\vee})=L(s,\pi^{\vee}\times\tau^{\vee}).
\end{align*}
Indeed this follows because the local $L$- and $\epsilon$-factors for $\Pi_{\nu}\times\tau_{\nu}$ and $\pi_{\nu}\times\tau_{\nu}$ coincide for all $\nu$, and similarly for $\Pi_{\nu}^{\vee}\times\tau_{\nu}^{\vee}$ and $\pi_{\nu}^{\vee}\times\tau_{\nu}^{\vee}$:
\begin{itemize}[leftmargin=*]
\item For all $\nu\notin S_{\pi}$ by Theorem~\ref{theorem:pi and Pi for unramified pi or archimedean},
\item For each $\nu\in S_{\pi}$ by Theorem~\ref{theorem:pi and Pi for ramified twisted}.
\end{itemize}
Hence we may verify Theorem~\ref{theorem:cnv} \eqref{It:analytic cont}--\eqref{It:functional eq} for $L(s,\pi\times\tau)$ and $L(s,\pi^{\vee}\times\tau^{\vee})$.

We apply Theorem~\ref{theorem:twisting to obtain entire L function}. The assumptions of this theorem
hold: indeed the condition $\eta_{\nu_0}^2\chi_{\pi_{\nu_0}}|_{\mathcal{O}_{\nu_0}^*}\ne1$ on $\eta$ implies that $\absdet^{it}\chi_{\pi}\tau\not\cong{\tau}^{\vee}$ for every $t\in\R$ (see
the proof of \cite[Lemma~3.2]{CKPS2}), and the assumption \eqref{it:local good} on the local $L$-factors at the places in $S_{\pi}$ is satisfied by
Theorem~\ref{theorem:pi and Pi for ramified twisted}. Hence Theorem~\ref{theorem:twisting to obtain entire L function} is applicable and we deduce that $L(s,\pi\times\tau)$ and $L(s,\pi^{\vee}\times\tau^{\vee})$ are entire and bounded in vertical strips of finite width. Finally by Theorem~\ref{theorem:global properties of L function}, $L(s,\pi\times\tau)=\epsilon(s,\pi\times\tau)L(1-s,\pi^{\vee}\times\tau^{\vee})$.

Therefore the conditions of Theorem~\ref{theorem:cnv} hold. Thus there exists an irreducible automorphic representation
$\Pi'$ of $\GL_N(\A)$ such that $\Pi'_{\nu}\cong\Pi_{\nu}$ for all $\nu\notin S_{\pi}$. The representation $\Pi'$ is hence a weak functorial transfer of $\pi$.
The proof of Theorem~\ref{theo:globl functorial lift} is complete.
\end{proof}
\begin{remark}\label{remark:archimedean places benefit}
The proof of Theorem~\ref{theo:globl functorial lift} yields a little bit more. Namely, it provides a weak functorial transfer $\Pi$ of $\pi$ such that $\Pi_{\infty}$ is the
local archimedean transfer of $\pi_{\infty}$.
\end{remark}
Recall the definition of coarse transfer from \S~\ref{coarse transfer} (see \eqref{local-FE-gamma}). With this notion we can formulate the following complementary results to
Theorem~\ref{theo:globl functorial lift}.
\begin{theorem}\label{theorem:weak transfer fixes gamma}
Let $\pi$ be a cuspidal representation of $G(\A)$ and let $\Pi$ be a weak functorial transfer of $\pi$.
Then for all places $\nu$ of $F$, $\Pi_{\nu}$ is a coarse transfer of $\pi_{\nu}$.
\end{theorem}
\begin{proof}
The representation $\Pi$ is an irreducible automorphic representation of $\GL_N(\A)$, and there is a (possibly empty) finite set $S_{\pi}$ of finite places such that for all $\nu\not\in S_{\pi}\coprod S_{\infty}$, $\pi_{\nu}$ is unramified and $\Pi_{\nu}$ is its local functorial transfer (in particular, $\Pi_{\nu}$ is unramified).
By the classification of \cite[\S~4]{JS2} and the multiplicativity of the Rankin--Selberg $\gamma$-factors (\cite{JPSS,JS3}), if the statement of the theorem holds for
one weak functorial transfer of $\pi$, it is true for any such transfer. Thus we may assume by Remark~\ref{remark:archimedean places benefit}, that for each $\nu\in S_{\infty}$, $\Pi_{\nu}$ is the archimedean functorial transfer of $\pi_{\nu}$.

Suppose $S_{\pi}$ is nonempty (otherwise, we are done).
For $\nu\notin S_{\pi}$, $\Pi_{\nu}$ is a coarse transfer of $\pi_{\nu}$ by Theorem~\ref{theorem:pi and Pi for unramified pi or archimedean}. This theorem also implies
\begin{align}\label{global:identity}
L^{S_{\pi}}(s,\pi\times\tau)=L^{S_{\pi}}(s,\Pi\times\tau),\qquad L^{S_{\pi}}(1-s,\pi^{\vee}\times\tau^{\vee})=L^{S_{\pi}}(1-s,\Pi^{\vee}\times\tau^{\vee}).
\end{align}

Fix $\nu_0\in S_{\pi}$. We need to show that for any $l\geq1$ and any supercuspidal $\sigma'\in\Irr(\GL_l(F_{\nu}))$,
\begin{align}\label{eq:gamma factors identical for any weak lift 2}
\gamma(s,\pi_{\nu_0}\times\sigma',\psi_{\nu_0})=\gamma(s,\Pi_{\nu_0}\times\sigma',\psi_{\nu_0}).
\end{align}
Now we argue as in the proof of \cite[Proposition 7.2]{CKPS}. Since $\sigma'$ is supercuspidal, we can globalize it to a cuspidal representation
$\sigma$ of $\GL_l(\A)$ such that $\sigma_{\nu}$ is unramified for all finite places $\nu\ne\nu_0$ and $\sigma_{\nu_0}=\sigma'$
(as in \cite[Appendice 1]{GH2}). Since $\sigma$ is globally generic (as a cuspidal representation of $\GL_l(\A)$),
the representations $\sigma_{\nu}$ are all generic. Take $S=S_{\pi}-\{\nu_0\}$.
Let $\eta$ be a unitary Hecke character of $\A^*$ which is sufficiently highly ramified for all $\nu\in S$, depending on
$\pi_{\nu}$ and $\Pi_{\nu}$, and such that $\eta_{\nu_0}=1$. Then for all $\nu\ne\nu_0$,
\begin{align}\label{eq:gamma factors identical for any weak lift 3}
\gamma(s,\pi_{\nu}\times\eta_{\nu}\sigma_{\nu},\psi_{\nu})=\gamma(s,\Pi_{\nu}\times\eta_{\nu}\sigma_{\nu},\psi_{\nu}).
\end{align}
This is clear for $\nu\notin S_{\pi}$ by Theorem~\ref{theorem:pi and Pi for unramified pi or archimedean}, and for $\nu\in S$ follows from Theorem~\ref{theorem:pi and Pi for ramified twisted}. Thus \eqref{eq:gamma factors identical for any weak lift 2} follows from \eqref{eq:gamma factors identical for any weak lift 3} and \eqref{global:identity}, combined with \eqref{eq:crude} and the analogous identity
for the Rankin--Selberg partial $L$-function (\cite{Cogdell2004,CPS}).
\end{proof}

\begin{corollary}\label{corollary:local converse thm}
Let $\localfield$ be a local field.
Every $\pi'\in\Irr(G(\localfield))$ has a coarse transfer (to $\GL_N(\localfield)$).
\end{corollary}
\begin{proof}
Clearly we can assume $\localfield$ is non-archimedean.
If $\pi'$ is supercuspidal, we can take a number field $F$ with a finite place $\nu$ such that $F_{\nu}=\localfield$, and globalize $\pi'$ to
a cuspidal representation $\pi$ of $G(\A)$ such that $\pi_{\nu}=\pi'$ (see e.g., \cite[Appendice 1]{GH2}).
Let $\Pi$ be a weak functorial transfer of $\pi$. By Theorem~\ref{theorem:weak transfer fixes gamma}, $\Pi_{\nu}$ is a coarse transfer of $\pi'$.

In general write $\pi'$ as a constituent of $(\sigma_1\times\ldots\times\sigma_d)\rtimes\pi_0$, where $\sigma_i\in\Irr(\GL_{k_i}(\localfield))$,
$\pi_0\in\Irr(\mathcal{G}_{c-2l}(\localfield))$ with $l=k_1+\ldots+k_d$, and the representations $\sigma_1,\ldots,\sigma_d,\pi_0$ are all supercuspidal. Then if $\Pi_0$ is a coarse transfer of $\pi_0$, any irreducible subquotient of $\times_{i=1}^d\sigma_i\times\Pi_0\times_{i=1}^d\chi_{\pi}\sigma_i^{\vee}$ is a coarse transfer of $\pi'$. This follows immediately from \eqref{eq:multiplicativity I} and the multiplicativity of the Rankin--Selberg $\gamma$-factors (see \cite{JPSS}).
\end{proof}

\appendix

\section{The poles of the \texorpdfstring{$\GL_c\times\GL_k$}{GL(c) x GL(k)} integrals, by Eyal Kaplan}\label{appendix:GL2 poles}

We describe a new family of $\GL_c\times\GL_k$ integrals arising as inner integrals in the doubling construction, and develop the necessary local theory in order to produce the poles in Theorem~\ref{theorem:archimedean producing poles} in \S~\ref{Producing poles}. These integrals were independently constructed by Ginzburg \cite{G8,G7} (see also \cite{LapidMao2018}). (This appendix is independent of \cite{G8,LapidMao2018,G7}.) We use the notation and conventions of \S~\ref{the groups} and \S~\ref{local groups and notation}, except here $G$ is a general linear group.

Let $F$ be a local field (with characteristic $0$). For $\sigma\in\Irr(\GL_l)$ and $a\in F^*$, $\sigma(a)$ denotes the value of the central character of $\sigma$ on $aI_l$.
For our purposes here, a set of poles is a set of pairs $(s,m)$ where $s\in\C$ and $m>0$ is an integer (the multiplicity of $s$ as a pole), and if $F$ is non-archimedean we regard $s$ as a point in $\C/\frac{2\pi\sqrt{-1}}{\log q}\Z$.

Let $c\geq1$ and $G=\GL_c$. Let $\pi\in\Irr(G)$ and $\tau\in\IrrGen(\GL_k)$ where $k\geq1$, and set $H=\GL_{kc}$. As in \S~\ref{local groups and notation}, denote the $(k,c)$ model of $\rho_c(\tau)$ by $W_{\psi}(\rho_c(\tau))$. For $g\in G$, put $\appembeddingOne(g)=\diag(g,I_{(k-1)c})$ and $\appembeddingTwo(g)=\diag(I_c,g,\ldots,g)$, then $\appembeddingOne(g)\appembeddingTwo(g)$ is the
diagonal embedding of $G$ in $H$. Also let $\mathcal{S}(\Mat_{a\times b})$ denote the space of Schwartz functions on $\Mat_{a\times b}$ and for
$\phi\in\mathcal{S}(\Mat_{a\times b})$, $\widehat{\phi}$ will denote its Fourier transform with respect to $\psi$ or $\psi^{-1}$ (according to the context).

For a matrix coefficient $\omega$ of $\pi$, $W\in W_{\psi}(\rho_c(\tau))$, $\phi\in\mathcal{S}(\Mat_c)$ and $s\in\C$, consider the integral
\begin{align*}
Z(s,\omega,W,\phi)=\int\limits_GW(\appembeddingOne(g))\phi(g)\omega(g)|\det g|^{s-(kc-2c+1)/2}\,dg.
\end{align*}
This integral is absolutely convergent in $\Real(s)\gg0$ independently of the data $\omega,W$ and $\phi$, and for a given $s$ can be made nonzero and holomorphic in a small neighborhood of $s$ (or identically $1$ for all $s$, over non-archimedean fields). The convergence is proved using Lemma~\ref{lemma:main convergence} and a bound on the matrix coefficient, as in the case of the integral~\eqref{eq:mult inner start 2}.

The integral $Z(s,\omega,W,\phi)$ extends to a meromorphic function of $s$, which is continuous as a form on
$\pi\times\pi^{\vee}\times\rho_c(\tau)\times\mathcal{S}(\Mat_c)$ over archimedean fields. We will deduce the meromorphic continuation and continuity
(in the restricted setup of representations that are of interest here)
using a multiplicativity result, see the discussion before Theorem~\ref{theorem:gcd}, and we proceed assuming they hold.

When $k>1$, since $W(\left(\begin{smallmatrix}I_c&v\\&I_{(k-1)c}\end{smallmatrix}\right))=\psi(\tr v_1)W(I_{kc})$ where $v_1\in\Mat_c$ is the leftmost block of $v$, the Schwartz function can be omitted from the integrand (see \cite[Lemma~(4.1.5)]{JPSS2}, \cite[Lemma~2.7]{JPSS} and \cite[Proposition~6.1]{Jac5}).
Thus the integral becomes a (continuous) trilinear form on $\pi\times\pi^{\vee}\times\rho_c(\tau)$, which we denote by $Z(s,\omega,W)$.

These integrals interpolate the integrals of \cite{GJ,JPSS,JS3} in the sense that for $k=1$ we obtain the $\GL_c\times\GL_1$ integral of Godement and Jacquet \cite{GJ}, and for $c=1$ --- the $\GL_k\times\GL_1$ integral of \cite{JPSS,JS3}. Since the case of $k=1$ is well understood, henceforth take $k>1$.

The local theory has a global counterpart: A global integral introduced in \cite{G7} which in a right half plane unfolds to an Eulerian integral, whose components are the local integrals here.

Denote by $\gamma^{\mathrm{RS}}(s,\pi\times\tau,\psi)$ the Rankin--Selberg $\gamma$-factor of \cite{JPSS,JS3} (defined even if $\pi$ is not generic).
We start with proving the existence of a $\gamma$-factor arising from a functional equation involving the present integrals.
Let $U_{(c^k)}^{\emptyblockforGL}=\{u\in U_{(c^k)}:u_{1,2}=0\}$ ($u_{1,2}\in\Mat_c$, see \S~\ref{local groups and notation}) and  $\psi^{\emptyblockforGL}=\psi_{k}|_{U_{(c^k)}^{\emptyblockforGL}}$. We recall that by \cite[Lemma~12]{CFKmodels}, for any $W\in W_{\psi}(\rho_c(\tau))$,
\begin{align}\label{eq:additional equivariance of W}
W(\appembeddingOne(g)\appembeddingTwo(g))=\tau(\det g)W(I_{kc}),\qquad\forall g\in G.
\end{align}
\begin{apptheorem}\label{theorem:uniqueness for bilinear RS}
Except for finitely many values of $q^{-s}$ over a non-archimedean field, and except for a discrete set of values of $s$ over an archimedean field,
the space of continuous trilinear forms $\mathcal{B}$ on $\pi\times\pi^{\vee}\times\rho_c(\tau)$ such that
for all $(\varphi,\varphi^{\vee},\xi)$ in the space of $\pi\times\pi^{\vee}\times\rho_c(\tau)$, $g,g'\in G$ and $u\in U_{(c^k)}^{\emptyblockforGL}$,
\begin{align}\label{eq:equivariance for bilinear RS}
&\mathcal{B}(\pi(g)\varphi,\tau^{\vee}(\det g')\pi^{\vee}(g')\varphi^{\vee},\rho_c(\tau)(u\appembeddingOne(g)\appembeddingTwo(g'))\xi)
\\&\qquad=\psi^{\emptyblockforGL}(u)|\det gg'^{-1}|^{-s+(kc-2c+1)/2}\mathcal{B}(\varphi,\varphi^{\vee},\xi),\nonumber
\end{align}
is at most one dimensional.
\end{apptheorem}
\begin{proof}
The proof was given in \cite[Theorem~25]{me14} (when considering linear groups, the proof in \cite{me14} is valid for any local field of characteristic $0$).
\end{proof}
The integral $Z(s,\omega,W)$ satisfies the equivariance properties \eqref{eq:equivariance for bilinear RS}. To define the local functional equation, we introduce a second integral as in \cite{JPSS,JS3}. Define
\begin{align*}
Z^*(s,\omega,W)=\int\limits_G\int\limits_{U_{((k-2)c,c)}^-}W(\diag(I_{(k-1)c},g)\diag(I_c,z)w_{((k-1)c,c)})\omega(g)|\det g|^{
s-1+(kc-2c+1)/2}\,dz\,dg.
\end{align*}
Here for any integers $a,b\geq1$, $w_{(a,b)}=\left(\begin{smallmatrix}&I_a\\I_b\end{smallmatrix}\right)\in\GL_{a+b}$.
The analytic properties of this integral: absolutely convergent in a half plane independent of the data, can be made nonzero and holomorphic for a given $s$, and admits meromorphic continuation which is continuous over archimedean fields, are similar to those of $Z(s,\omega,W)$, except that its domain of absolute convergence is a left half plane. Indeed one can use conjugations by elements of
${}^{w_{((k-1)c,c)}^{-1}}(U_{((k-2)c,2c)}\cap U_{((k-1)c,c)})$ to
eliminate the $dz$-integral using \cite{DM}, independently of $g$ (see \cite[\S~2.6]{JPSS} and \cite[\S~6.1]{Jac5}), then replace $(\omega,W)$ with $(\omega^*,W^*)$ (where $\omega^*(g)=\omega(g^*)$ and $W^*(h)=W(h^*)$) and change $g\mapsto g^*$ in the integral. This argument also implies that the $dz$-integration does not contribute to the set of poles of the integrals $Z^*(s,\omega,W)$. The integral $Z^*(s,\omega,W)$ also satisfies \eqref{eq:equivariance for bilinear RS}. Hence by Theorem~\ref{theorem:uniqueness for bilinear RS} there is a $\gamma$-factor $\gamma(s,\pi\times\tau,\psi)$ such that for all $\omega$ and $W$,
\begin{align}\label{eq:gamma RS}
\gamma(s,\pi\times\tau,\psi)Z(s,\omega,W)=\pi(-1)^{k-1}Z^*(s,\omega,W).
\end{align}
This factor is a well defined, not identically zero meromorphic function of $s$. When $c=1$, $\gamma(s,\pi\times\tau,\psi)=\gamma^{\mathrm{RS}}(s,\pi\times\tau,\psi)$ (apply \cite[(11.7.1)]{Soudry} with the parameters $k=l=1$ in the notation of \textit{loc. cit.}).

Henceforth also assume $\tau$ is unitary. We recall the realization of the $(k,c)$ model of $\rho_c(\tau)$ using a composition of $c$, from \cite[\S~3.2]{CFKmodels}. Let $0<l<c$. Then
\begin{align*}
\rho_c(\tau)\subset\Ind_{R_{(kl,k(c-l))}}^{H}(\Big(W_{\psi}(\rho_l(\tau))\otimes W_{\psi}(\rho_{c-l}(\tau))\Big)\delta_{R_{(kl,k(c-l))}}^{-1/(2k)}).
\end{align*}
For $u\in\Mat_c$ write $u=\left(\begin{smallmatrix}u^1&u^2\\u^3&u^4\end{smallmatrix}\right)$ where $u^1\in\Mat_{l}$ and $u^4\in\Mat_{c-l}$, and if $v\in\Mat_{kc}$, set $v=(v_{i,j})$ with $v_{i,j}\in\Mat_c$. For $1\leq i<j\leq k$ (resp., $1\leq j<i\leq k$) and $1\leq t\leq 4$, let $V_{i,j}^t$ denote the subgroup of $v\in U_{(c^k)}$ (resp., $v\in U_{(c^k)}^-$) satisfying $v_{i',j'}^{t'}=0$ for all $(i',j',t')\ne(i,j,t)$ such that $i'\ne j'$. Identify $V_{i,j}^t$ with $\Mat_{a\times b}$, e.g., $V_{i,j}^1\cong\Mat_{l}$ and $V_{i,j}^3\cong\Mat_{(c-l)\times l}$. Also let $V=V_l<U_{(c^k)}$ be the group generated by the subgroups $V_{i,j}^3$ for all $1\leq i<j\leq k$. The group $V$ is abelian. Set
\begin{align}\label{kappa}
&\kappa=\kappa_{l,c-l}=\left(\begin{smallmatrix}I_l\\0&0&I_l\\0&0&0&0&I_l&\ddots\\&&&&&&I_l&0\\0&I_{c-l}\\0&0&0&I_{c-l}&&\ddots\\&&&&&&&I_{c-l}\end{smallmatrix}\right)\in H\qquad
\text{($I_l$ and $I_{c-l}$ each appears $k$ times).}
\end{align}
For any $\xi$ in the space $V_{\rho_c(\tau)}$ of $\rho_c(\tau)$, consider the integral
\begin{align}\label{eq:mnk functional using w}
\xi\mapsto\int\limits_{V}\xi(\kappa v)\,dv.
\end{align}
By \cite[Lemma~9]{CFKmodels}, this integral is absolutely convergent and realizes the $(k,c)$ functional on $\rho_c(\tau)$. Thus we can write
$W(h)=W_{\xi}(h)=\int_V\xi(\kappa vh)\,dv$ ($h\in H$).

\begin{apptheorem}\label{theorem:RS mult}
Let $0<l<c$, $R=R_{(l,c-l)}$ and $\sigma=\pi_1\otimes\pi_2\in\Irr(M_R)$.
If $\pi\subset\Ind_{R}^{G}(\sigma)$, $\gamma(s,\pi\times\tau,\psi)=\prod_{i=1}^2\gamma(s,\pi_i\times\tau,\psi)$.
Thus if $\pi$ is unramified or $F$ is archimedean, $\gamma(s,\pi\times\tau,\psi)=\gamma^{\mathrm{RS}}(s,\pi\times\tau,\psi)$.
\end{apptheorem}
\begin{proof}
The second assertion clearly follows from the first, since in these cases $\pi$ is embedded in a principal series and the $\gamma$-factors coincide for $\GL_1\times\GL_k$.

Assume for the moment $\pi=\Ind_{R}^{G}(\sigma)$; the integral $Z(s,\omega,W)$ is still absolutely convergent in $\Real(s)\gg0$ (though $\Ind_{R}^{G}(\sigma)$ may be reducible). Fix the canonical pairing $\langle,\rangle$ on
$\sigma\otimes\sigma^{\vee}$ and let $\varphi$ (resp., $\varphi^{\vee}$) belong to the space of $\pi$ (resp., $\pi^{\vee}$).
In $\Real(s)\gg0$, $Z(s,\omega,W)$ equals
\begin{align}
\label{eq:poles GL2 start 00}
&\int\limits_{G^{\triangle}\backslash G\times G}W(\appembeddingOne(g_1)\appembeddingTwo(g_2))|\det g_1g_2^{-1}|^{s-(kc-2c+1)/2}
\tau^{\vee}(\det g_2)\int\limits_{R\backslash G}\langle\varphi(g_0g_1),\varphi^{\vee}(g_0g_2)\rangle\,dg_0\,dg_1\,dg_2
\\&=\int\limits_{R\times R\backslash G\times G}\label{eq:poles GL2 start}
\int\limits_{M_R}\int\limits_{U_R}\delta_{R}^{-1/2}(m)
W(\appembeddingOne(bmg_1)\appembeddingTwo(g_2))|\det m|^{s-(kc-2c+1)/2}
\\&\quad\tau^{\vee}(\det g_2)
\langle\sigma(m)\varphi(g_1),\varphi^{\vee}(g_2)\rangle\,db\,dm\,d(g_1,g_2)\nonumber.
\end{align}
(Cf. \cite[\S~4]{LR}.) Note that $|\det g_1|=|\det g_2|=1$ in \eqref{eq:poles GL2 start} because the outer integral is over $K_G\times K_G$.
We compare this integral to $Z^*(s,\omega,W)$, computed in $\Real(s)\ll0$. Set $w=w_{((k-1)c,c)}$. Since $\diag(g_2,\ldots,g_2,g_1)$ normalizes $\diag(I_c,U_{((k-2)c,c)}^-)$ and ${}^{w^{-1}}\diag(g_2,\ldots,g_2,g_1)=\appembeddingOne(g_1)\appembeddingTwo(g_2)$, $Z^*(s,\omega,W)$ becomes
\begin{align}
&\int\limits_{R\times R\backslash G\times G}\label{eq:poles GL2 start second}
\int\limits_{M_R}\int\limits_{U_R}
\int\limits_{U_{((k-2)c,c)}^-}\delta_{R}^{-1/2}(m)
W(\diag(I_{(k-1)c},bm)\diag(I_c,z)w\appembeddingOne(g_1)\appembeddingTwo(g_2))\\&\quad|\det m|^{s-1+(kc-2c+1)/2}
\tau^{\vee}(\det g_2)
\langle\sigma(m)\varphi(g_1),\varphi^{\vee}(g_2)\rangle\,dz\,db\,dm\,d(g_1,g_2).\nonumber
\end{align}

Since $R\backslash G$ is compact, to compare between \eqref{eq:poles GL2 start} and \eqref{eq:poles GL2 start second} it suffices to
prove that the inner $db\,dm$-integral of \eqref{eq:poles GL2 start} is the product of integrals for
$\pi_1\times\tau$ and $\pi_2\times\tau$, and the $dz\,db\,dm$-integral of \eqref{eq:poles GL2 start second} is the product of proportional integrals for $\pi_1\times\tau$ and $\pi_2\times\tau$ (by the factor $\prod_{i=1}^2\pi_i(-1)^{1-k}\gamma(s,\pi_i\times\tau,\psi)$).
We start with the inner integral of \eqref{eq:poles GL2 start}, which takes the form
\begin{align}\label{eq:poles GL2 start 4}
&\int\limits_{M_R}\int\limits_{U_R}\delta_{R}^{-1/2}(m)W(\appembeddingOne(bm))|\det m|^{s-(kc-2c+1)/2}
\langle\sigma(m)\varphi(I_c),\varphi^{\vee}(I_c)\rangle\,db\,dm.
\end{align}
By \eqref{eq:mnk functional using w} with $l$ and writing $W(h)=\int_V\xi(\kappa vh)\,dv$, we obtain
\begin{align}\label{eq:poles GL2 start 2}
&\int\limits_{M_R}\int\limits_{U_R}
\int\limits_V\delta_{R}^{-1/2}(m)\xi(\kappa v\appembeddingOne(bm))|\det m|^{s-(kc-2c+1)/2}
\langle\sigma(m)\varphi(I_c),\varphi^{\vee}(I_c)\rangle\,dv\,db\,dm.
\end{align}
Formally changing the order $dv\,db$ to $db\,dv$, we obtain an inner integral
$\int_{U_R}\psi^{-1}(\tr{bv_{1,2}^3})\,db$ where we identify $U_R$ with $\Mat_{l\times (c-l)}$. This integral vanishes unless $v_{1,2}^3=0$, whence \eqref{eq:poles GL2 start 2} becomes
\begin{align}\label{eq:poles GL2 start 3}
&\int\limits_{M_R}
\int\limits_{\{v\in V:v_{1,2}^3=0\}}\delta_{R}^{-1/2}(m)\xi(\kappa v\appembeddingOne(m))|\det m|^{s-(kc-2c+1)/2}
\langle\sigma(m)\varphi(I_c),\varphi^{\vee}(I_c)\rangle\,dv\,dm.
\end{align}

To justify the passage \eqref{eq:poles GL2 start 2}--\eqref{eq:poles GL2 start 3}, replace $\xi$ with $\phi(\xi)(h)=\int_{V_{1,2}^3}x\cdot \xi(h)\phi(x)\,dx$ for $\phi\in\mathcal{S}(V_{1,2}^3)$. Put $m=\diag(m_1,m_2)$ and identify $m_2$ with $\diag(I_l,m_2)$. Observe that
$x\in V_{1,2}^3$ commutes with $m_1$ and with the elements of $V$ ($V_{1,2}^3<V$ and $V$ is abelian). Then we have the following ``Ghost Train"
\footnote{These convolutions are always there, hidden from sight, appearing one after the other as a train...} argument:
\begin{align*}
&\int\limits_{U_R}\int\limits_V\phi(\xi)(\kappa v\appembeddingOne(bm))\,dv\,db
=\int\limits_{U_R}\int\limits_V\int\limits_{V_{1,2}^3}\xi(\kappa v\appembeddingOne(bm))\phi(x)\psi(\tr bm_2x)\,dx\,dv\,db
\\&=\int\limits_{U_R}\int\limits_V\xi(\kappa v\appembeddingOne(bm))\widehat{\phi}(bm_2)\,dv\,db
=\int\limits_{U_R}\int\limits_V\xi(\kappa v\appembeddingOne(m))\psi^{-1}(\tr v_{1,2}^3b)\widehat{\phi}(bm_2)\,dv\,db
\\&=\int\limits_{U_R}\int\limits_{\{v\in V:v_{1,2}^3=0\}}\int\limits_{V_{1,2}^3}\xi(\kappa v\appembeddingOne(m)({}^{\appembeddingOne(m)^{-1}}v_{1,2}^3))\psi^{-1}(\tr v_{1,2}^3b)\widehat{\phi}(bm_2)\,dv_{1,2}^3\,dv\,db
\\&
=\int\limits_{U_R}\int\limits_{\{v\in V:v_{1,2}^3=0\}}\int\limits_{V_{1,2}^3}\xi(\kappa v\appembeddingOne(m)v_{1,2}^3)\psi^{-1}(\tr v_{1,2}^3b)\widehat{\phi}(b)\,dv_{1,2}^3\,dv\,db
\\&=\int\limits_{\{v\in V:v_{1,2}^3=0\}}\int\limits_{V_{1,2}^3}\xi(\kappa v\appembeddingOne(m)v_{1,2}^3)\left(\int\limits_{U_R}\psi^{-1}(\tr v_{1,2}^3b)\widehat{\phi}(b)\,db\right)\,dv_{1,2}^3\,dv
\\&=\int\limits_{\{v\in V:v_{1,2}^3=0\}}\int\limits_{V_{1,2}^3}\xi(\kappa v\appembeddingOne(m)v_{1,2}^3)\phi(v_{1,2}^3)\,dv_{1,2}^3\,dv
=\int\limits_{\{v\in V:v_{1,2}^3=0\}}\phi(\xi)(\kappa v\appembeddingOne(m))\,dv.
\end{align*}
Here two equalities before the last, we may change the order
$(dv_{1,2}^3\,dv)\,db\mapsto db\,(dv_{1,2}^3\,dv)$ because the $dv$-integral is absolutely convergent and
because of the Schwartz function $\widehat{\phi}$. Since any $\xi$ is a finite sum of $\phi_i(\xi_i)$ for some $\xi_i\in V_{\rho_c(\tau)}$ and
$\phi_i\in\mathcal{S}(V_{1,2}^3)$ (\cite{DM}), this identity applies in general.

We proceed with \eqref{eq:poles GL2 start 3}. Consider the abelian group
\begin{align*}
X=&\left\{\left(\begin{smallmatrix}
I_l&&&&0&0&\cdots&0\\
&I_l&&&\vdots&x_{1,2}&\ddots&\vdots\\
&&\ddots&&\vdots&\vdots&\ddots&0\\
&&&I_l&0&x_{1,k}&\cdots&x_{k-1,k}\\
&&&&I_{c-l}\\
&&&&&I_{c-l}\\
&&&&&&\ddots\\
&&&&&&&I_{c-l}
\end{smallmatrix}\right):x_{i,j}\in\Mat_{l\times(c-l)}\right\}<U_{(kl,k(c-l))}.
\end{align*}
Let $X_{i,j}<X$ be the subgroup such that $x_{i',j'}=0$ for all $(i',j')\ne(i,j)$, $X_{i,j}\cong\Mat_{l\times(c-l)}$, and
denote an element of $X_{i,j}$ by $x_{i,j}$.
One can argue as in \cite[Lemma~9]{CFKmodels} to replace the $dv$-integration of $\xi$ with a convolution of $\xi$ against Schwartz functions. That argument will already be independent of $m_1$, but not of $m_2$. Here because we already have $v_{1,2}^3=0$, we can make it independent of both $m_1$ and $m_2$, by modifying $X_{i,j}$ for $i=1$.
(We will not be using $X_{1,2}$.)

Recall the order $(k-1,k),(k-2,k-1),\ldots,(1,2)$, $(k-2,k),(k-3,k-1),\ldots,(1,3),\ldots,(1,k)$ of handling the blocks $V_{i,j}^3$ of $V$ in \cite[Lemma~9]{CFKmodels}. For $i>1$ put
$X'_{i,j}={}^{\kappa^{-1}}X_{i,j}$ and for $j>2$ let $X'_{1,j}=V_{j-1,1}^2$ ($V_{j-1,1}^2\cong\Mat_{l\times(c-l)}$). Observe that for $x'_{1,j}\in X'_{1,j}$, $v_{1,j}^3\in V_{1,j}^3$ and $j>2$, we have ${}^{\appembeddingOne(m)}x'_{1,j}=x'_{1,j}m_2^{-1}$,
$\xi(\kappa\,{}^{{x'_{1,j}}^{-1}}v_{1,j}^3)=\psi^{-1}(\tr x'_{1,j}v_{1,j}^3)\xi(\kappa v_{1,j}^3)$ and moreover, assuming $v$ does not contain coordinates $v_{i,j}^3$ which appear prior to $(1,j)$ (with respect to the above order), $\xi(\kappa\,\cdot)$ transforms trivially on the left under the coordinates outside $V$ that appear in the conjugation ${}^{{x'_{1,j}}^{-1}}v$. Also denote for $\xi\in V_{\rho_c(\tau)}$ and $\phi\in\mathcal{S}(X'_{i,j})$,
\begin{align}\label{eq:xi i j}
\xi_{i,j}(h)=\int\limits_{X'_{i,j}}x\cdot\xi(h)\phi(x)\,dx,\qquad \xi'_{i,j}(h)=\int\limits_{V_{i,j}^3}v\cdot \xi(h)\widehat{\phi}(v)\,dv.
\end{align}
First we change variables $v_{1,j}^3\mapsto m_2v_{1,j}^3$ and altogether multiply the integrand by $|\det m_2|^{(k-2)l}$.
We proceed as in the proof of \cite[Lemma~9]{CFKmodels} using the same order, except for $(i,j)=(1,2)$ which we already handled above (using $b$).
Briefly, by \cite{DM} any $\xi$ is a finite sum of $(\xi^{n})_{k-1,k}$ defined by \eqref{eq:xi i j} for some finite set of pairs $\{(\xi^{n},\phi^{n})\}_{n}$. Computing
\eqref{eq:poles GL2 start 3} for each $(\xi^{n})_{k-1,k}$, it becomes the similar integral
but with $(\xi^{n})'_{k-1,k}$ and the $dv$-integral is computed over $\{v\in V:v_{1,2}^3=v_{k-1,k}^3=0\}$. Since these manipulations do not affect the matrix coefficient, we can re-denote $\xi=\sum_n(\xi^{n})'_{k-1,k}$ then
\eqref{eq:poles GL2 start 3} equals
\begin{align}\label{eq:poles GL2 start 3.1}
&\int\limits_{M_R}
\int\limits_{\{v\in V:v_{1,2}^3=v_{k-1,k}^3=0\}}\delta_{R}^{-1/2}(m)\xi(\kappa v\appembeddingOne(m))|\det m_1|^{s-(kc-2c+1)/2}
|\det m_2|^{s+(k-2)l-(kc-2c+1)/2}
\\&\langle\sigma(m)\varphi(I_c),\varphi^{\vee}(I_c)\rangle\,dv\,dm.\nonumber
\end{align}
Again by \cite{DM}, $\xi=\sum_n(\xi^{n})_{k-2,k-1}$ for some $\{(\xi^{n},\phi^{n})\}_{n}$ and we can proceed similarly.
Altogether we deduce that the integrand is a Schwartz function of $v$ independently of $m$ (after the changes $v_{1,j}^3\mapsto m_2v_{1,j}^3$) and \eqref{eq:poles GL2 start 3.1} becomes
\begin{align}\label{int:GL Lemma after removing V}
&\int\limits_{M_R}
\xi(\kappa \appembeddingOne(m))\langle\sigma(m)\varphi(I_c),\varphi^{\vee}(I_c)\rangle|\det m_1|^{s-(kc-c-l+1)/2}
|\det m_2|^{s+(k-2)l-(kc-2c-l+1)/2}\,dm
\\&=\prod_{i=1}^2Z(s,\omega_i,W_i).\nonumber
\end{align}
Here $Z(s,\omega_i,W_i)$ is the integral for $\pi_i\times\tau$, and the functions $W_1\in W_{\psi}(\rho_{l}(\tau))$ and
$W_2\in W_{\psi}(\rho_{c-l}(\tau))$ are defined by
\begin{align*}
W_i(h_i)=\delta_{R_{(kl,k(c-l))}}^{(1-k)/(2k)}(h_i)\xi(h_i\kappa),
\end{align*}
where $h_1\in\GL_{kl}$ and $h_2\in\GL_{k(c-l)}$ are identified on the r.h.s.~ with their natural images in $M_{(kl,k(c-l))}$. Note that for each $\xi\in V_{\rho_c(\tau)}$ there is $\xi_0\in V_{\rho_c(\tau)}$ such that \eqref{eq:poles GL2 start 3} with $\xi_0$ equals \eqref{int:GL Lemma after removing V} with $\xi$, so that we can assume the functions $W_i$ are arbitrary.

Now consider the inner integral of \eqref{eq:poles GL2 start second},
\begin{align*}
&\int\limits_{M_R}\int\limits_{U_R}\int\limits_{U_{((k-2)c,c)}^-}
\int\limits_V\delta_{R}^{-1/2}(m)\xi(\kappa v\diag(I_{(k-1)c},bm)\diag(I_c,z)w)|\det m|^{s-1+(kc-2c+1)/2}
\\&\nonumber\langle\sigma(m)\varphi(I_c),\varphi^{\vee}(I_c)\rangle\,dv\,dz\,db\,dm.
\end{align*}
The step analogous to \eqref{eq:poles GL2 start 2}--\eqref{eq:poles GL2 start 3} but with $v_{k-1,k}^3$ implies that this integral equals
\begin{align}\label{eq:poles Z* GL2 start 2}
&\int\limits_{M_R}\int\limits_{U_{((k-2)c,c)}^-}
\int\limits_{\{v\in V:v_{k-1,k}^3=0\}}\delta_{R}^{-1/2}(m)\xi(\kappa v\diag(I_{(k-1)c},m)\diag(I_c,z)w)|\det m|^{s-1+(kc-2c+1)/2}
\\&\nonumber\langle\sigma(m)\varphi(I_c),\varphi^{\vee}(I_c)\rangle\,dv\,dz\,dm.
\end{align}
For the justification we use $\phi(\xi)(h)=\int_{V_{k-1,k}^3}x\cdot \xi(h)\phi(x)\,dx$, and as above prove that
\begin{align*}
&\int\limits_{U_R}\int\limits_V\phi(\xi)(\kappa v\diag(I_{(k-1)c},bm))\,dv\,db
=\int\limits_{\{v\in V:v_{k-1,k}^3=0\}}\phi(\xi)(\kappa v\diag(I_{(k-1)c},m))\,dv.
\end{align*}
This applies to all $\xi$ ($\xi$ is a sum of $\phi_i(\xi_i)$), in particular to $(\diag(I_c,z)w)\cdot\xi$ (we justify this step for each fixed $z$).

Set $Z=\diag(I_c,U_{((k-2)c,c)}^-)$. Using our notation for the blocks of $U_{(c^k)}$, $Z$ is the (direct) product of subgroups
$\{V_{k,j}^t\}_{1<j<k,1\leq t\leq 4}$ and we let $Z^t$ (resp., $Z_{k,j}^t$) denote the subgroup of $Z$ where all blocks other than
$\{V_{k,j}^t\}_{1<j<k}$ (resp., $V_{k,j}^t$) are zero. Then $z_{k,j}^t$ will denote an element of $Z_{k,j}^t$.
Note that $V_{k,j}^1\in\Mat_l$ and $V_{k,j}^4\in\Mat_{c-l}$.

We use $Z^2$ to eliminate the blocks $\{v_{d,k}^3\}_{1\leq d\leq k-2}$ of $v$. Proceeding in decreasing order $d=k-2,\ldots,1$ and assuming
$v$ does not contain the coordinates $\{v_{i,k}^3\}_{d<i\leq k-2}$,
\begin{align*}
\xi(\kappa v\diag(I_{(k-1)c},m)z_{k,d+1}^2)=\psi(\tr v_{d,k}^3m_1z_{k,d+1}^2)\xi(\kappa v\diag(I_{(k-1)c},m)).
\end{align*}
We change variables $v_{d,k}^3\mapsto v_{d,k}^3m_1^{-1}$. Then as above formally integrating over $Z_{k,d+1}^2$ first, we eliminate $v_{d,k}^3$ (note that $v$ appears to the left of $\diag(I_{(k-1)c},m)$!). For the justification repeat the Ghost Train argument above
using $\phi(\xi)(h)=\int_{V_{d,k}^3}x\cdot \xi(h)\phi(x)\,dx$. As with
$V_{k-1,k}^3$, for each $d$ this argument is carried out without the outer integration on the remaining coordinates $z'$ of $z$ other than
$z_{k,d+1}^3$, then the identity is valid for $(\diag(I_c,z')w)\cdot\xi$ as well. Following this step the integrand is multiplied by $|\det m_1|^{(2-k)(c-l)}$ and \eqref{eq:poles Z* GL2 start 2} becomes
\begin{align}\label{eq:poles Z* GL2 start 3}
&\int\limits_{M_R}\int\limits_{Z^1Z^3Z^4}
\int\limits_{\{v\in V:v_{i,k}^3=0,1\leq i<k\}}\delta_{R}^{-1/2}(m)\xi(\kappa v\diag(I_{(k-1)c},m)\diag(I_c,z)w)|\det m_1|^{s-1+(k-2)l+c-(kc-1)/2}
\\&\nonumber|\det m_2|^{s-1+(kc-2c+1)/2}\langle\sigma(m)\varphi(I_c),\varphi^{\vee}(I_c)\rangle\,dv\,dz\,dm.
\end{align}

Next we apply the same substitutions of $\xi$ as above (\eqref{eq:poles GL2 start 3}--\eqref{int:GL Lemma after removing V}), to handle the integration over $v$, except that when we translate $\xi$ on the right by $x'_{i,j}$,
$v$ is conjugated by ${{}^wx'_{i,j}}^{-1}$ instead of ${x'_{i,j}}^{-1}$, and we also have the integration over the remaining subgroups $Z^1$, $Z^3$ and $Z^4$ of $Z$. Observe that for $j>2$, ${}^wX'_{1,j}<U_{((k-3)c+l,3c-l)}\cap U_{((k-1)c,c)}\cap U_{((k-1)c+l,c-l)}$ and in particular ${}^wX'_{1,j} <U_{((k-2)c,2c)}\cap U_{((k-1)c,c)}$, whence conjugation by these elements eliminates $Z^3$: The conjugation of $Z$ by ${}^wX'_{1,d+2}$ will be used below to zero out $z_{k,d+1}^3$, $1\leq d\leq k-2$.

We started with writing $\xi=\sum_n(\xi^{n})_{k-1,k}$. Repeating this here, and since $v_{k-1,k}^3$ is already $0$ (using $b$),
the conjugation of $v$ by ${{}^wx'_{k-1,k}}^{-1}$ is used to zero out $v_{k-2,k-1}^3$, and because ${}^{w^{-1}}V_{k-2,k-1}^3=V_{k-1,k}^3$, the integral~\eqref{eq:poles Z* GL2 start 3} with $(\xi^{n})_{k-1,k}$ becomes the similar integral with $(\xi^{n})'_{k-1,k}$ and $v$ satisfies $v_{k-2,k-1}^3=0$. Re-denote
$\xi=\sum_n(\xi^{n})'_{k-1,k}$. Next write $\xi=\sum_n(\xi^{n})_{k-2,k-1}$ and proceed to eliminate $v_{k-3,k-2}^3$, etc., except ${}^wx'_{1,d+2}$ is used for $z_{k,d+1}^3$ (instead of $v_{1,d+2}^3$) and $v_{d,k}^3$ was already zeroed out by $z_{k,d+1}^2$.
Note that $x'_{i,j}$ with $i>1$ was used above to eliminate $v_{i,j}^3$, here ${}^wx'_{i,j}$ eliminates $v_{i-1,j-1}^3$ and
${}^{w^{-1}}V_{i-1,j-1}^3=V_{i,j}^3$, and for
$1\leq d\leq k-2$, ${}^{w^{-1}}Z_{k,d+1}^3=V_{1,d+2}^3$. This guarantees that the passage from $(\xi^{n})_{i,j}$ to
$(\xi^{n})'_{i,j}$ above will remain the same here, even though here the Fourier transform is over coordinates of $V_{i-1,j-1}^3$ or
$Z_{k,d+1}^3$.

The resulting unipotent subgroup of $Z$ is $Z^1\cdot Z^4$. We obtain the integral
\begin{align}\label{int:GL Lemma after removing V second}
&\int\limits_{M_R}\int\limits_{Z^1Z^4}
\xi(\kappa \diag(I_{(k-1)c},m)\diag(I_c,z)w)|\det m_1|^{s-1+kl-(kc-c+3l-1)/2}
|\det m_2|^{s-1+(kc-2c+l+1)/2}\\&
\langle\sigma(m)\varphi(I_c),\varphi^{\vee}(I_c)\rangle\,dz\,dm.\nonumber
\end{align}
Now observe that
\begin{align*}
&{}^{\kappa}\diag(I_{(k-1)c},m)=\diag(I_{(k-1)l},m_1,I_{(k-1)(c-l)},m_2),\\
&{}^{\kappa}(Z^1Z^4)=\diag(I_l,U_{((k-2)l,l)}^-,I_{c-l},U_{((k-2)(c-l),c-l)}^-),
\quad{}^{\kappa}w=\diag(w_{((k-1)l,l)},w_{((k-1)(c-l),c-l)}).
\end{align*}
Hence we reach
$\prod_{i=1}^2Z^*(s,\omega_i,W_i)$. Comparing this to \eqref{int:GL Lemma after removing V}
using \eqref{eq:gamma RS} shows that the inner integrals of
\eqref{eq:poles GL2 start} and \eqref{eq:poles GL2 start second} are proportional by
$\prod_{i=1}^2\pi_i(-1)^{1-k}\gamma(s,\pi_i\times\tau,\psi)$. This proves the identity
$\prod_{i=1}^2\gamma(s,\pi_i\times\tau,\psi)Z(s,\omega,W)=\pi(-1)^{k-1}Z^*(s,\omega,W)$.
Hence for each irreducible $\pi'\subset\Ind_R^G(\sigma)$,
$\gamma(s,\pi'\times\tau,\psi)=\prod_{i=1}^2\gamma(s,\pi_i\times\tau,\psi)$, and in particular for $\pi$.
\end{proof}
\begin{appexample}
When $c=k=2$,
\begin{align*}
\kappa=\left(\begin{smallmatrix}1&\\&&1\\&1\\&&&1\end{smallmatrix}\right),\quad
v=\left(\begin{smallmatrix}1&&\\&1&v_{1,2}^3&\\&&1\\&&&1\end{smallmatrix}\right),\quad
\appembeddingOne(b)=
\left(\begin{smallmatrix}1&b\\&1\\&&1\\&&&1\end{smallmatrix}\right),
\quad
\appembeddingOne(m)=
\left(\begin{smallmatrix}m_1&\\&m_2&\\&&1\\&&&1\end{smallmatrix}\right)
\end{align*}
and the justification to \eqref{eq:poles GL2 start 2}--\eqref{eq:poles GL2 start 3} is simple to see.
\end{appexample}
\begin{appremark}\label{remark:ind of t unr}
The identity $\int_{U_R}\int_V\xi(\kappa v \appembeddingOne(bm))\,dv\,db=\xi(\kappa \appembeddingOne(m))$ was first observed in the unramified case for $R=B_G$ in \cite{G8,G7}.
\end{appremark}
\begin{appexample}\label{example:removing v}
Consider $c=2$ and $k=4$. We have
\begin{align*}
&\kappa=\left(\begin{smallmatrix}
    1 &  &  &  &  &  &  &  \\
     & 0 & 1 &  &  &  &  &  \\
     &  &  & 0 & 1 &  &  &  \\
     &  &  &  &  & 0 & 1 &  \\
     0& 1 &  &  &  &  &  &  \\
     &  & 0 & 1 &  &  &  &  \\
     &  &  &  & 0 & 1 &  &  \\
     &  &  &  &  &  & 0 & 1
  \end{smallmatrix}\right),\qquad
v=\left(\begin{smallmatrix}
    1 &  &  &  &  &  &  &  \\
     & 1 & v_{1,2}^3  & & v_{1,3}^3 &  & v_{1,4}^3 &  \\
     &  & 1 &  &  &  &  &  \\
     &  &  & 1 & v_{2,3}^3  &  & v_{2,4}^3 &  \\
     &  &  &  & 1 &  &  & \\
     &  &  &  &  & 1 & v_{3,4}^3 &  \\
     &  &  &  &  &  & 1 &  \\
     &  &  &  &  &  &  & 1
  \end{smallmatrix}\right),
  \qquad x=\left(\begin{smallmatrix}
    1 &  &  &  &  &  &  &  \\
     & 1 &   & &  &  &  &  \\
     & x_{1,3} & 1 &  &  &  &  &  \\
     &  &  & 1 &   &  &  &  \\
     & x_{1,4} &  &  & 1 & x_{2,3} &  & \\
     &  &  &  &  & 1 &  &  \\
     &  &  &  &  & x_{2,4} & 1 & x_{3,4} \\
     &  &  &  &  &  &  & 1
  \end{smallmatrix}\right),
\\&w=\left(\begin{smallmatrix}&I_6\\I_2\end{smallmatrix}\right),\qquad{}^wx=\left(\begin{smallmatrix}
    1 &  &  &  &  &  &  & x_{1,3} \\
     & 1 &   & &  &  &  &  \\
     &  & 1 & x_{2,3} &  &  &  & x_{1,4} \\
     &  &  & 1 &   &  &  &  \\
     &  &  & x_{2,4} & 1 & x_{3,4} &  & \\
     &  &  &  &  & 1 &  &  \\
     &  &  &  &  &  & 1 &  \\
     &  &  &  &  &  &  & 1
  \end{smallmatrix}\right),\qquad
  z=\left(\begin{smallmatrix}
    1 &  &  &  &  &  &  &  \\
     & 1 &   & &  &  &  &  \\
     &  & 1 &  &  &  &  &  \\
     &  &  & 1 &   &  &  &  \\
     &  &  &  & 1 &  &  & \\
     &  &  &  &  & 1 &  &  \\
     &  & z_{4,2}^1 & z_{4,2}^2 & z_{4,3}^1 & z_{4,3}^2 & 1 &  \\
     &  & z_{4,2}^3 & z_{4,2}^4 & z_{4,3}^3 & z_{4,3}^4 &  &  1
  \end{smallmatrix}\right).
\end{align*}
Here the elements corresponding to $X'_{i,j}$ appear together as a matrix $x$. To handle the coordinates of $v$ and $z$ in \eqref{eq:poles Z* GL2 start 2}, first note that $v_{3,4}^3=0$. Then use $z_{4,3}^2$ to remove $v_{2,4}^3$, $z_{4,2}^2$ for $v_{1,4}^3$,
$x_{3,4}$ for $v_{2,3}^3$, $x_{2,3}$ for $v_{1,2}^3$,
$x_{2,4}$ for $v_{1,3}^3$, $x_{1,3}$ for $z_{4,2}^3$ and $x_{1,4}$ for $z_{4,3}^3$.
\end{appexample}

Recall that $\pi\in\Irr(G)$, $\tau\in\IrrGen(\GL_k)$, $\tau$ is also unitary and $k>1$. Further assume that
either $F$ is non-archimedean and $\pi$ is unramified (but $\tau$ is not assumed to be unramified), or $F$ is archimedean, until the end of the appendix.
In these cases the proof of Theorem~\ref{theorem:RS mult} reduces the properties of
meromorphic continuation and continuity of the continuation in the input data of both $Z(s,\omega,W)$ and $Z^*(s,\omega,W)$,
to those similar properties for Rankin--Selberg integrals for $\GL_1\times\GL_k$, which are known (\cite{JPSS,JS3,Jac5}).
This follows from \eqref{eq:poles GL2 start} and \eqref{eq:poles GL2 start second}, where we expressed $Z(s,\omega,W)$ and $Z^*(s,\omega,W)$ as
iterated integrals with a compact outer domain $R\times R\backslash G\times G$, and from \eqref{int:GL Lemma after removing V}
and \eqref{int:GL Lemma after removing V second} (the passage \eqref{eq:poles GL2 start 4}--\eqref{int:GL Lemma after removing V}
was justified in general, i.e., not for a specific choice of data). Refer to \cite[\S~5, Lemma~1]{Soudry3} and \cite[\S~5, Lemma~2, p.~199]{Soudry3} for more details. Of course over non-archimedean fields the meromorphic continuation already follows from
Theorem~\ref{theorem:uniqueness for bilinear RS}, the fact that the integral can be made constant, and Bernstein's continuation principle (\cite{Banks}), for all
irreducible representations $\pi$ and $\tau$ such that $\tau$ is generic.

For brevity let $\mathcal{F}$ be the ring of entire functions, $\mathcal{M}$ be its field of fractions, i.e., the field of meromorphic functions, and $\mathcal{F}^*$ be the group of entire nowhere vanishing functions (in the non-archimedean case
$\mathcal{F}=\C[q^{-s},q^s]$ and $\mathcal{F}^*=\C[q^{-s},q^s]^*$, see \S~\ref{local groups and notation}). For $f(s),f'(s)\in\mathcal{M}$ write
$f(s)\simeq f'(s)$ if $f(s)=e(s)f'(s)$ for some $e(s)\in\mathcal{F}^*$.
\begin{apptheorem}\label{theorem:gcd}
There is a nowhere vanishing function $\gcdzzz(s,\pi\times\tau)\in\mathcal{M}$ such that
$\gcdzzz(s,\pi\times\tau)^{-1}Z(s,\omega,W)\in\mathcal{F}$ for all data, and for each $s$ the product
$\gcdzzz(s,\pi\times\tau)^{-1}Z(s,\omega,W)$ can be made nonzero.
In the non-archimedean case we can assume $\gcdzzz(s,\pi\times\tau)=P(q^{-s})^{-1}$ where $P\in\C[X]$ satisfies $P(0)=1$; otherwise
$\gcdzzz(s,\pi\times\tau)$ is defined up to an element of $\mathcal{F}^*$.
In addition:
\begin{enumerate}[leftmargin=*]
\item An equivalent definition of $\gcdzzz(s,\pi\times\tau)$ can be given using the integrals $Z^*(1-s,\omega',W')$ where $\omega'$ is a matrix coefficient of $\pi^{\vee}$ and $W'\in W_{\psi}(\rho_c(\tau^{\vee}))$.
\item\label{it:theorem 3.1} With the notation of Theorem~\ref{theorem:RS mult}, $\gcdzzz(s,\pi\times\tau)=A(s)\prod_{i=1}^2\gcdzzz(s,\pi_i\times\tau)$ for $A(s)\in\mathcal{F}$.
\item $\gcdzzz(s,\pi\times\tau)$ is independent of $\psi$.
\item $\gcdzzz(s,\absdet^{s_0}\pi\times\tau)=\gcdzzz(s,\pi\times\absdet^{s_0}\tau)=\gcdzzz(s+s_0,\pi\times\tau)$.
\end{enumerate}
\end{apptheorem}
\begin{proof}
As observed in the beginning of the appendix, the additional unipotent integration in $Z^*(s,\omega,W)$ does not change the set of poles of the integrals, and for each $s$, $Z^*(s,\omega,W)$ can be made holomorphic and nonzero. (Recall that sets of poles also include multiplicities.) Thus for the purpose of defining $\gcdzzz(s,\pi\times\tau)$ one can consider either
family of integrals ($Z(s,\omega,W)$ or $Z^*(1-s,\omega',W')$). Once $\gcdzzz(s,\pi\times\tau)$ is defined, to see it is independent of $\psi$ use left translations of $W$ by $\diag(aI_c,\ldots,a^kI_c)$ for $a\in F^*$;
and the assertion on unramified twisting is also clear.

Consider \eqref{eq:poles GL2 start}. Since $R\backslash G$ is compact and the integrals \eqref{eq:poles GL2 start 3} are continuous (as forms), the set of poles of \eqref{eq:poles GL2 start} is contained in the set of poles of integrals of the form \eqref{eq:poles GL2 start 3} (\eqref{eq:poles GL2 start 2}--\eqref{eq:poles GL2 start 3} was justified in general). Now the step
\eqref{eq:poles GL2 start 3}--\eqref{int:GL Lemma after removing V} shows that the integrand of \eqref{eq:poles GL2 start 3} is a Schwartz function of $v$ independently of $m$, and the poles are contained in the poles of a product of
$\GL_l\times\GL_k$ and $\GL_{c-l}\times\GL_k$ integrals. This already implies \eqref{it:theorem 3.1} once $\gcdzzz(s,\pi\times\tau)$ is defined.
Applying this inductively, the poles are contained in the poles of a product of
Rankin--Selberg $\GL_1\times\GL_k$ integrals, and thereby in a product of Rankin--Selberg $L$-factors $L(s,\pi_i\times\tau)$ for some quasi-characters $\pi_i$ depending on $\pi$.

If $F$ is non-archimedean, the integrals are rational functions in $q^{-s}$. We deduce that the set of possible poles for the integrals is finite,
and in particular so are their multiplicities. It then follows that these integrals span a fractional ideal of $\C[q^{-s},q^s]$ containing $1$, and the existence of $\gcdzzz(s,\pi\times\tau)$ and its form are now clear.

Assume $F$ is archimedean. Now the argument above shows that the set of poles of the integrals is contained in the set of poles of finitely
many twisted classical Gamma functions ($\Gamma(r_is+d_i)$ for some constants $r_i\in\{1,1/2\}, d_i\in\C$). In addition, for each $s$ one can choose data such that the integral is finite and nonzero at $s$. This implies that we can define $\gcdzzz(s,\pi\times\tau)\in\mathcal{M}$ such that $\gcdzzz(s,\pi\times\tau)^{-1}Z(s,\omega,W)\in\mathcal{F}$ for all data, and for each $s$, nonvanishing for some choice of data. The uniqueness up to $\mathcal{F}^*$ also follows.
\end{proof}
\begin{appremark}
When $F$ is archimedean, using \eqref{eq:gamma RS}, the equality $\gamma(s,\pi\times\tau,\psi)=\gamma^{\mathrm{RS}}(s,\pi\times\tau,\psi)$ and the fact that there is a finite vertical strip outside of which $Z(s,\omega,W)$ and $Z^*(s,\omega,W)$ do not have common poles, one deduces that the set of poles of the integrals is obtained by removing finitely many poles from a set of poles of
classical Gamma functions, and adding finitely many poles in the strip.
\end{appremark}
By Theorems~\ref{theorem:uniqueness for bilinear RS} and \ref{theorem:gcd} there is a factor $\epsilon(s,\pi\times\tau,\psi)\in\mathcal{F}^*$ such that for all $\omega$ and $W$,
\begin{align}\label{eq:L RS}
\epsilon(s,\pi\times\tau,\psi)\frac{Z(s,\omega,W)}{\gcdzzz(s,\pi\times\tau)}=
\pi(-1)^{k-1}\frac{Z^*(s,\omega,W)}{\gcdzzz(1-s,\pi^{\vee}\times\tau^{\vee})}.
\end{align}
Combined with \eqref{eq:gamma RS} this gives the usual functional equation
\begin{align}\label{eq:L RS and Gamma}
\gamma(s,\pi\times\tau,\psi)=\epsilon(s,\pi\times\tau,\psi)\frac{\gcdzzz(1-s,\pi^{\vee}\times\tau^{\vee})}{\gcdzzz(s,\pi\times\tau)}.
\end{align}
\begin{applemma}\label{lemma:lemma poles RS general}
Assume that $\pi$ is the unique irreducible quotient of $\Ind_{R_{\beta}}^G(\sigma_{\beta})$ where
$\sigma_{\beta}=\otimes_{i=1}^{d}\sigma_i$, $\sigma_i=\absdet^{a_i}\sigma_{i,0}$, each
$\sigma_{i,0}$ is tempered and $a_1>\ldots>a_{d}$. Denote
$\beta'=(\beta_2,\ldots,\beta_d)$, $\sigma_{\beta'}=\otimes_{i=2}^d\sigma_{i}$ and let
$\pi'$ be the unique irreducible quotient of $\Ind_{R_{\beta'}}^{\GL_{c-\beta_1}}(\sigma_{\beta'})$. Then $\gcdzzz(s,\pi'\times\tau)=P(s)\gcdzzz(s,\pi\times\tau)$ for $P(s)\in\mathcal{F}$.
\end{applemma}
\begin{proof}
It suffices to show that for each $s$ which is a pole of $\gcdzzz(s,\pi'\times\tau)$, there is a choice of data for which
$Z(s,\omega,W)/\gcdzzz(s,\pi'\times\tau)$ is nonzero at $s$.
Put $l=\beta_1$, $G'=\GL_{c-l}$ and $R=R_{(l,c-l)}$ ($M_R\cong\GL_l\times G'$). The representation $\pi$ is the Langlands quotient
of $\Ind_R^G(\sigma_1\otimes\pi')$ and the image of the
convergent intertwining operator $\Ind_R^G(\sigma_1\otimes\pi')\rightarrow\Ind_{R^-}^G(\sigma_1\otimes\pi')$ given by
$I(\varphi)(g)=\int_{U_R^-}\varphi(ug)\,du$. Additionally $\pi^{\vee}$ is the Langlands quotient
of $\Ind_{R^-}^G(\sigma_1^{\vee}\otimes{\pi'}^{\vee})$. Let $\langle,\rangle$ be the canonical pairing on
$(\sigma_1\otimes\pi')\otimes(\sigma_1^{\vee}\otimes{\pi'}^{\vee})$, and let $\varphi^{\vee}$ be an arbitrary vector in the space of $\Ind_{R^-}^G(\sigma_1^{\vee}\otimes{\pi'}^{\vee})$. Then, the following function
\begin{align}\label{eq:strange coefficients}
\omega(g)=\int\limits_{R^-\backslash G}\langle I(\varphi)(g_0g),\varphi^{\vee}(g_0)\rangle\,dg_0=
\int\limits_{M_R\backslash G}\langle\varphi(g_0g),\varphi^{\vee}(g_0)\rangle\,dg_0
\end{align}
given by an absolutely convergent integral (use Lemma~\ref{lemma:intconvergence}) is a matrix coefficient of $\pi$.

Substituting \eqref{eq:strange coefficients} into the integral~\eqref{eq:poles GL2 start 00} (instead of $\int_{R\backslash G}$) and using the analog of
\eqref{eq:collapsing formula}, we formally obtain
\begin{align}\label{eq:poles GL2 start 01}
Z(s,\omega,W)=&\int\limits_{R\times R^-\backslash G\times G}\int\limits_{U_R^-}\int\limits_{M_R}\int\limits_{U_R}
\delta_{R}^{-1}(m)W(\appembeddingOne(y^{-1}bmg_1)\appembeddingTwo(g_2)
)|\det m|^{\complexSbrevity}
\\&\tau^{\vee}(\det g_2)\langle\varphi(mg_1),\varphi^{\vee}(g_2)\rangle\,db\,dm\,dy\,d(g_1,g_2).\notag
\end{align}
Here we set $\complexSbrevity=s+(kc-2c+1)/2$ for brevity and also used \eqref{eq:additional equivariance of W} with $g=y$.

To justify this formal step over archimedean fields, we observe that the r.h.s.~ of \eqref{eq:poles GL2 start 01} is absolutely convergent in $\Real(s)\gg0$.
Write $y=m_yb_yk_y$ with $m_y\in M_R$, $b_y\in U_R$ and $k_y\in K_G$ and note that we can take $g_1,g_2\in K_G$. After changing variables in $b$ and $m$, and using \eqref{eq:additional equivariance of W}, we need to show that the following integral is finite:
\begin{align*}
&\int\cdots\int\left|\delta_{R}^{-1}(m)W(\appembeddingOne(bmg_1)\appembeddingTwo(k_yg_2))\langle\varphi(m_ymg_1),\varphi^{\vee}(g_2)\rangle\right|
|\det m|^{\complexSbrevity}\,d(\cdots)\notag.
\end{align*}
By \eqref{eq:conv main N 2}, for some $\Phi\in\mathcal{S}(\Mat_c)$ and a constant $d_0>0$, this integral is bounded by
\begin{align*}
&\int\cdots\int\left|\delta_{R}^{-1}(m)\Phi(bm)\langle\varphi(m_ymg_1),\varphi^{\vee}(g_2)\rangle\right|
|\det{m}|^{\complexSbrevity-d_0}\,d(\cdots).\notag
\end{align*}
Now the proof of Lemma~\ref{lemma:intconvergence} shows that the $dy$-integral
is bounded by $C||m||^D$, where $||\cdot||$ is a norm on $G$, and $C$ and $D$ are constants. The convergence follows.

Next, as in the proof of Corollary~\ref{corollary:main poles} (and using \cite{DM} over archimedean fields), for any $\varrho$ (resp., $\varrho^{\vee}$) in the space of
$\sigma_1\otimes\pi'$ (resp., $\sigma_1^{\vee}\otimes{\pi'}^{\vee}$) and $W\in W_{\psi}(\rho_c(\tau))$, we can choose data $(\varphi_i,\varphi^{\vee}_i,W_i)$ such that a finite sum of integrals \eqref{eq:poles GL2 start 01} is equal to
\begin{align}\label{eq:poles GL2 start 02}
&\int\limits_{U_R^-}\int\limits_{M_R}\int\limits_{U_R}
\delta_{R}^{-1}(m)W(\appembeddingOne(y^{-1}bm))|\det m|^{\complexSbrevity}
\langle\varepsilon(m)\varrho,\varrho^{\vee}\rangle\,db\,dm\,dy.
\end{align}
If $F$ is archimedean, the absolute convergence of \eqref{eq:poles GL2 start 02} in $\Real(s)\gg0$ follows from the above proof of the similar convergence for \eqref{eq:poles GL2 start 01}. If $F$ is non-archimedean we assume this convergence for \eqref{eq:poles GL2 start 02}, momentarily, but note that for each $W$, one pair $(\varphi_1,\varphi_1^{\vee})$ is enough to pass from \eqref{eq:poles GL2 start 01} to \eqref{eq:poles GL2 start 02}. Thus the absolute convergence of \eqref{eq:poles GL2 start 02} in $\Real(s)\gg0$ will justify the passage from $Z(s,\omega,W)$ to \eqref{eq:poles GL2 start 01} (see the proof of Corollary~\ref{corollary:main poles}).

The integral~\eqref{eq:poles GL2 start 02} admits meromorphic continuation, because $Z(s,\omega,W)$ does.
We have thus reduced the proof to the statement that for each $s$ which is a pole of $\gcdzzz(s,\pi'\times\tau)$, there exist data
$(\varrho,\varrho^{\vee},W)$ for which \eqref{eq:poles GL2 start 02} divided by $\gcdzzz(s,\pi'\times\tau)$ is nonzero at $s$.

Consider the integral~\eqref{eq:poles GL2 start 02}. Put $m=\diag(m_1,m')\in M_R$ and identify $m_1=\diag(m_1,I_{c-l})$ and
$m'=\diag(I_l,m')$. Each $x\in V_{1,2}^1$ commutes with $\appembeddingOne(m'b)$, and
\begin{align*}
W(\appembeddingOne(y^{-1}m_1)x)=\psi(\tr m_1x)W(\appembeddingOne(y^{-1}m_1)).
\end{align*}
Hence if $\phi(W)(h)=\int_{V_{1,2}^1}x\cdot W(h)\phi(x)\,dx$ where $\phi\in\mathcal{S}(\Mat_l)$, the integral~\eqref{eq:poles GL2 start 02} equals
\begin{align}\label{eq:poles GL2 start 03}
&\int\limits_{\GL_l}\delta_R^{-1/2}(m_1)Z^1(s,\sigma_1(m_1)\otimes\pi'(I_{c-l})\varrho,\varrho^{\vee},\appembeddingOne(m_1)\cdot W)\widehat{\phi}(m_1)|\det m_1|^{\complexSbrevity}\,dm_1,
\end{align}
where
\begin{align*}
Z^1(s,\varrho,\varrho^{\vee},W)=&\int\limits_{U_R^-}\int\limits_{G'}\int\limits_{U_R}
\delta_{R}^{-1/2}(m')W(\appembeddingOne(y^{-1}bm'))|\det m'|^{\complexSbrevity}
\langle\sigma_1(I_l)\otimes\pi'(m')\varrho,\varrho^{\vee}\rangle\,db\,dm'\,dy.\nonumber
\end{align*}

Over non-archimedean fields it is clear that if $Z^1(s,\varrho,\varrho^{\vee},W)/\gcdzzz(s,\pi'\times\tau)$ is nonzero at $s$, then by choosing $\widehat{\phi}$ compactly supported near $I_l$, the integral~\eqref{eq:poles GL2 start 03}, and thereby \eqref{eq:poles GL2 start 02}, divided by $\gcdzzz(s,\pi'\times\tau)$ is also nonzero at $s$.

Over archimedean fields we will show that $Z^1(s,\varrho,\varrho^{\vee},W)$ is a meromorphic function which is continuous in the data $\varrho$, $\varrho^{\vee}$ and $W$.
It then follows that, when $\widehat{\phi}$ is compactly supported near $I_l$, the integral~\eqref{eq:poles GL2 start 03} is a meromorphic function when we regard $Z^1(s,\varrho,\varrho^{\vee},W)$ as a meromorphic function, and as meromorphic functions \eqref{eq:poles GL2 start 03} and \eqref{eq:poles GL2 start 02} are equal. Assume $Z^1(s,\varrho,\varrho^{\vee},W)/\gcdzzz(s,\pi'\times\tau)$ is nonzero at $s$. By the continuity of $Z^1(s,\varrho,\varrho^{\vee},W)$, we can take $\widehat{\phi}$ compactly supported near $I_l$ such that \eqref{eq:poles GL2 start 03}, and thereby \eqref{eq:poles GL2 start 02}, divided by $\gcdzzz(s,\pi'\times\tau)$ is nonzero at $s$.

We turn to $Z^1(s,\varrho,\varrho^{\vee},W)$. Since $\appembeddingOne(bm')$ centralizes $V_{1,2}^1\cdot V_{1,2}^2$, and for
$x\in V_{1,2}^2$, ${}^{\appembeddingTwo(y)}x\in V_{1,2}^1\cdot V_{1,2}^2$, we can use \eqref{eq:additional equivariance of W} again with $g=y^{-1}$
and obtain
\begin{align*}
W(\appembeddingOne(y^{-1}bm')x)=W(\appembeddingOne(bm')\appembeddingTwo(y)x)=\psi^{-1}(\tr xy)\,\appembeddingTwo(y)\cdot W(\appembeddingOne(bm')).
\end{align*}
Thus by taking a finite sum $\sum_i\phi_i(W_i)$, where
$\phi_i(W_i)(h)=\int_{V_{1,2}^2}x\cdot W_i(h)\phi_i(x)\,dx$ with
$W_i\in W_{\psi}(\rho_c(\tau))$ and $\phi_i\in\mathcal{S}(\Mat_{l\times c-l})$, and using \cite{DM}, we can obtain an arbitrary integral
of the form
\begin{align}\label{int:the last one}
&\int\limits_{G'}\int\limits_{U_R}\delta_{R}^{-1/2}(m')W(\appembeddingOne(bm'))|\det m'|^{\complexSbrevity}\langle\sigma_1(I_l)\otimes\pi'(m')\varrho,\varrho^{\vee}\rangle\,db\,dm'.
\end{align}
This integral is absolutely convergent in $\Real(s)\gg0$, by Lemma~\ref{lemma:main convergence}.
Over non-archimedean fields this already implies the absolute convergence of the integral~\eqref{eq:poles GL2 start 02}.

We continue as in the proof of Theorem~\ref{theorem:RS mult} and apply the arguments taking \eqref{eq:poles GL2 start 4} to \eqref{int:GL Lemma after removing V}, with
$G'$ instead of $M_R$, to obtain the integral $Z(s,\omega',W')$ for arbitrary $\omega'$ and $W'$, where $\omega'$ is a
matrix coefficient of $\pi'$ and $W'\in W_{\psi}(\rho_{c-l}(\tau))$.
The analytic properties and continuity properties of $Z^1(s,\varrho,\varrho^{\vee},W)$ now follow from those of $Z(s,\omega',W')$, for arbitrary data. Since any pole of $\gcdzzz(s,\pi'\times\tau)$ is obtained by some $Z(s,\omega',W')$, the proof is complete for both non-archimedean and archimedean fields.
\end{proof}
\begin{appremark}
For the similar result for Rankin--Selberg integrals over non-archimedean fields see \cite[Lemma~9.3]{JPSS}.
\end{appremark}
Let $L(s,\pi\times\tau)$ be the Rankin--Selberg $L$-factor of $\pi\times\tau$ (\cite{JPSS,JS3}).
\begin{appproposition}\label{appproposition:GL2 poles}
$\gcdzzz(s,\pi\times\tau)^{-1}L(s,\pi\times\tau)\in\mathcal{F}$.
\end{appproposition}
\begin{proof}
Write $\pi$ and $\pi'$ using the notation of Lemma~\ref{lemma:lemma poles RS general}.
Denote $\beta^*=(\beta_d,\ldots,\beta_1)$ and $(\beta^*)'=(\beta_{d-1},\ldots,\beta_1)$. Then
$\pi^{\vee}$ is the unique irreducible quotient of $\Ind_{R_{\beta^*}}^G(\otimes_{i=d}^1\sigma_i^{\vee})$ and $\Ind_{R_{(\beta_d,c-\beta_d)}}^G(\sigma_d^{\vee}\otimes(\pi^{\vee})')$, where
$(\pi^{\vee})'$ is the unique irreducible quotient of $\Ind_{R_{(\beta^*)'}}^{\GL_{c-\beta_d}}(\otimes_{i=d-1}^1\sigma_i^{\vee})$.

We argue using induction on $d$. When $d=1$, $\pi=\sigma_1=\absdet^{a_1}\sigma_{1,0}$ is essentially tempered.
Then
\begin{align*}\gamma^{\mathrm{RS}}(s,\sigma_1\times\tau,\psi)\simeq L(1-s-a_1,\sigma_{1,0}^{\vee}\times\tau^{\vee})/L(s+a_1,\sigma_{1,0}\times\tau).
\end{align*}
Here there are no cancellations between the $L$-factors --- the poles of $L(s+a_1,\sigma_{1,0}\times\tau)$ are contained in $\Real(s)+a_1<1/2$ while those of $L(1-s-a_1,\sigma_{1,0}^{\vee}\times\tau^{\vee})$ are in $\Real(s)+a_1>1/2$ ($\tau$ is generic and unitary). Hence
\eqref{eq:L RS and Gamma} and Theorem~\ref{theorem:RS mult} imply
\begin{align*}
\frac{\gcdzzz(1-s,\sigma_1^{\vee}\times\tau^{\vee})}{\gcdzzz(s,\sigma_1\times\tau)}\simeq\frac{L(1-s,\sigma_1^{\vee}\times\tau^{\vee})}{L(s,\sigma_1\times\tau)}.
\end{align*}
Therefore $\gcdzzz(s,\sigma_1\times\tau)^{-1}L(s,\sigma_1\times\tau)\in\mathcal{F}$, though it might not be in $\mathcal{F}^*$ (precisely when the l.h.s.~ has common poles).

Assume $d>1$. We proceed as in \cite[Proposition~9.4]{JPSS} (but our statement is weaker).
There (always) exist $A(s),B(s)\in\mathcal{M}$ such that
\begin{align*}
\gcdzzz(s,\pi\times\tau)=A(s)\prod_{i=1}^{d}L(s,\sigma_i\times\tau),\quad
\gcdzzz(1-s,\pi^{\vee}\times\tau^{\vee})=B(1-s)\prod_{i=1}^{d}L(1-s,\sigma_i^{\vee}\times\tau^{\vee}).
\end{align*}
Now according to \eqref{eq:L RS and Gamma} and Theorem~\ref{theorem:RS mult},
\begin{align*}
\frac{\gcdzzz(1-s,\pi^{\vee}\times\tau^{\vee})}{\gcdzzz(s,\pi\times\tau)}
\simeq\gamma^{\mathrm{RS}}(s,\pi\times\tau,\psi)\simeq\prod_{i=1}^d\frac{L(1-s,\sigma_i^{\vee}\times\tau^{\vee})}{L(s,\sigma_i\times\tau)}.
\end{align*}
Hence $A(s)/B(1-s)$ belongs to $\mathcal{F}^*$.

Applying the induction hypothesis to $\gcdzzz(s,\pi'\times\tau)$ and since $L(s,\pi'\times\tau)=\prod_{i=2}^dL(s,\sigma_i\times\tau)$, there is $Q(s)\in\mathcal{F}$ satisfying
\begin{align*}
\prod_{i=2}^dL(s,\sigma_i\times\tau)=Q(s)\gcdzzz(s,\pi'\times\tau).
\end{align*}
Combining this with Lemma~\ref{lemma:lemma poles RS general}, there is $P(s)\in\mathcal{F}$ such that
\begin{align*}
\gcdzzz(s,\pi\times\tau)&=A(s)\prod_{i=1}^{d}L(s,\sigma_i\times\tau)=A(s)L(s,\sigma_1\times\tau)Q(s)\gcdzzz(s,\pi'\times\tau)\\&=
A(s)Q(s)P(s)L(s,\sigma_1\times\tau)\gcdzzz(s,\pi\times\tau).
\end{align*}
Thus $A(s)Q(s)P(s)L(s,\sigma_1\times\tau)\simeq1$ and $A(s)L(s,\sigma_1\times\tau)\in\mathcal{M}$ is nowhere vanishing. By the similar argument applied to $\gcdzzz(1-s,(\pi^{\vee})'\times\tau)$ we deduce that
$B(1-s)L(1-s,\sigma_d^{\vee}\times\tau^{\vee})\in\mathcal{M}$ is nowhere vanishing. Since $a_1\geq a_d$, the poles of $L(s,\sigma_1\times\tau)$ and $L(1-s,\sigma_d^{\vee}\times\tau^{\vee})$ are disjoint whence $A(s)$ and $B(1-s)$ do not have common zeros, and because $A(s)/B(1-s)\in\mathcal{F}^*$, both $A(s)$ and $B(1-s)$ are nowhere vanishing. Since
$L(s,\pi\times\tau)=\prod_{i=1}^dL(s,\sigma_i\times\tau)$, we conclude that
$\gcdzzz(s,\pi\times\tau)=A(s)L(s,\pi\times\tau)$ whence
$\gcdzzz(s,\pi\times\tau)^{-1}L(s,\pi\times\tau)\in\mathcal{F}$.
\end{proof}
\begin{apptheorem}\label{apptheorem:GL2 poles}
Let $\pi\in\Irr(G)$. If $F$ is non-archimedean, suppose $\pi$ is unramified. Let
$\tau\in\IrrGen(\GL_k)$ be unitary, $k>1$.
For each $s$, there is a matrix coefficient $\omega$ of $\pi$ and a function $W\in W_{\psi}(\rho_c(\tau))$ such that
$Z(s,\omega,W)/L(s,\pi\times\tau)$
is nonzero (but may have a pole). If $F$ is archimedean, we can further assume $\omega$ is $K_G$-finite and $W$ is $K_H$-finite.
\end{apptheorem}
\begin{proof}
Direct application of Proposition~\ref{appproposition:GL2 poles}.
The passage to $K_G$- and $K_H$-finite data follows because the meromorphic continuation of the integrals is continuous.
\end{proof}

\def\cprime{$'$} \def\cprime{$'$} \def\cprime{$'$}

\end{document}